\newcommand{\LL}{\bm{L}}
\newcommand{\PP}{\bm{P}}
\newcommand{\DD}{\bm{D}}
\newcommand{\TT}{\bm{T}}
\newcommand{\EE}{\bm{E}}
\newcommand{\uCdot}{\dot C}
\newcommand{\uQ}{Q}
\newcommand{\lCdot}{\mathsf{\dot C}}
\newcommand{\lQ}{\mathsf{Q}}
\newcommand{\lz}{\mathsf{z}}
\newcommand{\ludot}{\mathsf{\dot u}}
\newcommand{\lds}{\frac{ds}{\ell_s^2}}
\newcommand{\nV}[1]{\|\tilde V^{(#1)}\|}
\newcommand{\nVt}[1]{\|\tilde V_t^{(#1)}\|}
\newcommand{\nVs}[1]{\|\tilde V_s^{(#1)}\|}
\author{Roland Bauerschmidt\footnote{University of Cambridge, Statistical Laboratory, DPMMS. E-mail: {\tt rb812@cam.ac.uk}.} \and
  \and Thierry Bodineau\thanks{CMAP \'Ecole Polytechnique, CNRS, Universit\'e Paris-Saclay. E-mail: {\tt thierry.bodineau@polytechnique.edu}.}}
\title{Log-Sobolev inequality for the continuum sine-Gordon model}
\date{\vspace{-5ex}}
\begin{document}
\maketitle
\begin{abstract}
  We derive a multiscale generalisation of the Bakry--\'Emery criterion for a measure to satisfy a Log-Sobolev inequality.
  Our criterion relies on the control of an associated PDE well known in renormalisation theory: the Polchinski equation.
  It implies the usual Bakry--\'Emery criterion, 
  but we show that it remains effective for measures
  which are far from log-concave.
  Indeed, using our criterion,
  we prove that the massive continuum sine-Gordon model with $\beta < 6\pi$
  satisfies asymptotically optimal Log-Sobolev inequalities for Glauber and Kawasaki dynamics.
  These dynamics can be seen as singular SPDEs recently constructed
  via regularity structures, but our results are independent of this theory.
\end{abstract}

\section{Introduction and results}
\label{sec:intro}

\subsection{Introduction}

Log-Sobolev inequalities are strong inequalities with numerous general consequences,
including
concentration of measure,
relaxation and hypercontractivity of stochastic dynamics,
transport inequalities,
and others. See \cite{MR3155209,MR1837286} for a review.
They originate from Quantum Field Theory
where Log-Sobolev inequalities were first derived for Gaussian measures
as a tool to study non-Gaussian measures in infinite dimensions
(Euclidean Quantum Field Theories, EQFTs)
\cite{MR0210416,doi:10.1063/1.1664760,MR0420249}.
As a consequence of a general new approach, we prove Log-Sobolev inequalities for
the massive sine-Gordon model. This is a fundamental example of a \emph{non-Gaussian} EQFT in two dimensions
and its stochastic dynamics is a prototypical example of a singular SPDE.

As Log-Sobolev inequalities provide strong control on the measures they apply to,
proving them remains in general a difficult problem even if the equilibrium correlation
functions are well understood.
This applies especially to strongly correlated measures.
For log-concave measures (or measures satisfying a curvature dimension condition),
the fundamental Bakry--\'Emery criterion 
provides a simple and often quite sharp sufficient condition \cite{MR889476,MR2213477}.
In its proof, a Log-Sobolev inequality for
a Markov semigroup is derived by integration of local Log-Sobolev inequalities for the
\emph{same} Markov semigroup.
Our method also uses local Log-Sobolev inequalities, but for a semigroup that is \emph{different}
from the one for which the Log-Sobolev inequality is proven.
Namely our method uses the time-dependent semigroup driven by the Polchinski equation,
a version of the renormalisation semigroup.
Unlike the original semigroup, this Polchinski semigroup provides a notion of scale
and hence we effectively
obtain a multiscale version of the Bakry--\'Emery criterion.

The simplest version of our new Polchinski equation criterion for the Log-Sobolev inequality is stated in Section~\ref{sec:intro-lsi}.
In Example~\ref{ex:BE}, we illustrate that it implies the Bakry--\'Emery criterion.
As an application of the new criterion,
demonstrating that it remains effective for measures that are far from log-concave,
we prove the following theorem for the continuum sine-Gordon model.
For a precise statement of this result and related discussion, we refer to Section~\ref{sec:intro-sg}.
In Section~\ref{sec:intro-discuss}, we discussed further directions and related results.

\begin{theorem}
  The continuum massive sine-Gordon model with $\beta < 6\pi$ 
  satisfies asymptotically optimal Log-Sobolev inequalities for Glauber and Kawasaki dynamics
  (under suitable conditions).
\end{theorem}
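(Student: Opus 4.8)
\emph{Strategy.} The plan is to apply the Polchinski-equation criterion of Section~\ref{sec:intro-lsi} to an ultraviolet-regularised, finite-volume version of the sine-Gordon measure, and then to remove the regularisations. First I would fix a cutoff $\varepsilon>0$, a mass $m>0$ and a torus $\Lambda$, and represent the regularised measure as the centred Gaussian measure with covariance $C=(m^2-\Delta)^{-1}$, mollified at scale $\varepsilon$, weighted by $e^{-V_0}$ with $V_0(\phi)=-2\lambda_\varepsilon\int_\Lambda\cos(\sqrt\beta\,\phi_\varepsilon(x))\,dx$, where $\lambda_\varepsilon$ carries the charge renormalisation. I would take a scale decomposition $C=\int_0^\infty\dot C_t\,dt$ compatible with the mollification (essentially $\dot C_t=e^{-t(m^2-\Delta)}$ with the ultraviolet scales below $\varepsilon$ removed), so that $\int_0^\infty\|\dot C_t\|\,dt=\|C\|\sim m^{-2}$, let $V_t$ solve the Polchinski equation $\partial_t V_t=\frac12\langle\dot C_t,\mathrm{Hess}\,V_t\rangle-\frac12\langle\nabla V_t,\dot C_t\,\nabla V_t\rangle$ with initial datum $V_0$, and invoke the criterion. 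Schematically, it bounds the Log-Sobolev constant of the regularised measure by $\int_0^\infty\|\dot C_t\|\,\exp\!\big(c\int_0^t\lambda_s\,ds\big)\,dt$, where $\lambda_t$ controls the renormalised Hessian in the sense $\dot C_t^{1/2}\,\mathrm{Hess}\,V_t\,\dot C_t^{1/2}\ge-\lambda_t$ (together with a matching upper bound); if $\int_0^\infty\lambda_t\,dt$ is finite and small, the constant is $(1+o(1))\,m^{-2}$, the free-field value, hence asymptotically optimal. The whole problem thus reduces to controlling the flow $t\mapsto V_t$ and its Hessian, uniformly in $\varepsilon$ and in $\Lambda$.

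The core is therefore a renormalisation-group analysis of the Polchinski flow started from the exponential interaction. I would track $V_t$ in a norm resolving its decomposition into multi-charge components $\int\!\cdots\!\int\prod_{j=1}^{k}\cos\big(\sqrt\beta\,\phi(x_j)\big)\,dx_1\cdots dx_k$: the one-charge component essentially undergoes the Wick renormalisation, being multiplied by $e^{-\frac12\beta\Gamma_t}$ with $\Gamma_t$ the variance of the fluctuations integrated up to scale $t$, while higher-charge components are generated by the quadratic term of the equation. Since $e^{i\sqrt\beta\phi}$ has scaling dimension $\beta/8\pi$, the $k$-charge component has dimension $k\beta/8\pi$ and carries $k$ powers of the small activity, so these components are increasingly irrelevant and increasingly small. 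The quantitative inputs would be: (i) a bootstrap/Gronwall argument along the flow showing the multi-charge norms of $V_t$ stay small, using crucially that $|\cos|\le1$, so that $V_0$ is a genuinely bounded perturbation and the interaction never leaves a perturbative regime; (ii) uniformity in $\varepsilon$: the short-distance region $t<\varepsilon^2$ contributes negligibly because its length vanishes while $\lambda_t$ there is controlled by the bare value, and the remaining scale integrals are $\varepsilon$-independent; (iii) uniformity in the volume, from the exponential decay in $t$ of $\dot C_t$ at rate $m^2$, which also makes $\int_0^\infty\lambda_t\,dt$ converge as $t\to\infty$.

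I expect the main obstacle to be the control of $\mathrm{Hess}\,V_t$ demanded by the criterion, and with it the precise threshold $\beta<6\pi$. The quadratic term $-\frac12\langle\nabla V_t,\dot C_t\,\nabla V_t\rangle$ turns two one-charge factors into a charge-two component $\propto\cos\big(\sqrt\beta(\phi(x)+\phi(y))\big)$, of the larger dimension $\beta/2\pi$, which is the most singular contribution to the Hessian; one must show that the $\dot C_t$-smearing, the accumulated Wick factors and the extra power of the activity nonetheless keep the associated scale integral for $\lambda_t$ convergent, uniformly in $\varepsilon$. This is strictly stronger than mere existence of the measure (which needs only $\beta<8\pi$, i.e.\ integrability of the one-charge term): controlling the second variation brings in two copies of the vertex operator against the two $\dot C_t^{1/2}$ factors, and the requirement reduces, schematically, to a condition of the form $2\cdot\beta/8\pi<3/2$, that is $\beta<6\pi$. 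A secondary, more technical, difficulty is to design the norms on functionals $V$ so that they are monotone under $\varepsilon\to0$ and $|\Lambda|\to\infty$ and lower semicontinuous under the weak limit of the measures, so that the inequality survives the passage to the continuum.

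Finally I would assemble the pieces. The Hessian bound, fed into the criterion of Section~\ref{sec:intro-lsi}, gives a Log-Sobolev inequality for the regularised measure with constant $\le m^{-2}(1+o(1))$, the correction tending to $0$ with the activity (or in the appropriate scaling regime); by a standard stability argument for Log-Sobolev inequalities under the weak limit of the measures, and using that the continuum sine-Gordon measure exists for $\beta<8\pi$, this passes to that measure with the same constant. This is already the Glauber statement, the Dirichlet form $\int\|\nabla_\phi f\|_{L^2(\Lambda)}^2\,d\mu$ appearing in the criterion being exactly that of the Glauber (dynamical sine-Gordon) dynamics. For the Kawasaki, i.e.\ conservative, dynamics, whose Dirichlet form carries an extra spatial gradient, $\int\|\nabla_x\nabla_\phi f\|^2\,d\mu$, I would either re-run the criterion with a covariance decomposition adapted to the conservative reference form (the Polchinski flow and its estimates being essentially unchanged), or deduce the inequality from the Glauber one using the Poincar\'e inequality for $-\Delta$ on $\Lambda$; either way one obtains the conservative Log-Sobolev constant of the optimal order. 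The matching lower bounds, in both cases, come from testing the inequalities against linear functionals of $\phi$, for which the sine-Gordon and free-field covariances agree to leading order.
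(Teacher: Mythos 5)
Your overall strategy is the paper's: regularise on a lattice torus, decompose $C_\infty=(-\Delta+m^2)^{-1}$ via the heat semigroup, run the Polchinski flow from the cosine interaction, verify the Hessian condition \eqref{e:assVt-heat} uniformly in $\epsilon$, and feed it into Theorem~\ref{thm:LSI-heat}; identifying the neutral charge-two (dipole) component as the source of the $\beta<6\pi$ threshold is also on target. But the proposal stops at the level of an outline exactly where the work lies, and two of the steps you do spell out would not go through as stated. The decisive difficulty for $4\pi\le\beta<6\pi$ is not merely that the scale integral for $\lambda_t$ must converge: the neutral two-charge kernel behaves like $z_t^2(e^{\beta C_t(x,y)}-1)\sim z_t^2|x-y|^{-\beta/2\pi}$, which is not absolutely summable in $d=2$ once $\beta\ge 4\pi$, so no norm bound on the multi-charge components of the kind you propose can close. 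One must use the exact dipole structure of its Hessian, $\sum_{x,y}U_t(x,y)\,(Q_tf(x)-Q_tf(y))^2$, so that the heat-kernel smoothing supplies two extra powers of $|x-y|$ (Proposition~\ref{prop:Vtbd-thres1-2}); and the binding constraint $\beta<6\pi$ then actually comes from the three- and four-charge terms (one dipole against a charge, and two dipoles), via \eqref{e:thresh1-Cbd3}, \eqref{e:thresh1-Cbd2b} and \eqref{e:intzlbd} with $n=4$, i.e.\ $\beta<8\pi(1-1/4)$, rather than from a ``$2\cdot\beta/8\pi<3/2$'' count on the second variation alone. You would also need the induction closing the bounds for $n\ge5$ and a separate, non-perturbative argument for large activity, which the paper obtains from the identity \eqref{e:PtHessV}.

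Your final assembly is also off in two places. The theorems are uniform-in-$\epsilon$ statements about the lattice measures with the Dirichlet forms \eqref{e:Dirichlet-eps} and \eqref{e:Dirichlet-Kawa-eps}; the paper never passes the Log-Sobolev inequality itself to the $\epsilon\to0$ limit, and doing so is not a ``standard stability'' fact here because the Dirichlet form (and the state space) changes with $\epsilon$. Likewise, \emph{asymptotically optimal} is not established by matching lower bounds from linear test functions: it means that the bound $\gamma\ge m^2-O_\beta(m^{\beta/4\pi}|z|)$ recovers the exact Gaussian constant $m^2$ as $z\to0$. Finally, for Kawasaki dynamics, deducing the conservative inequality from the Glauber one via a Poincar\'e inequality for $-\Delta$ ignores the constraint to the hyperplane $\sum_x\varphi_x=0$; the paper instead reruns the entire argument with the projected covariance $C^0_\infty=PA^{-1}P$ and the generator restricted to that subspace.
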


Throughout this paper, we make the assumption that all functions considered are Borel measurable
and that all functions to which derivatives are applied are continuously differentiable of the required order.

\subsection{Polchinski equation and Log-Sobolev inequality}
\label{sec:intro-lsi}

In this section we state the simplest version of our new criterion
for a probability measure to satisfy a Log-Sobolev inequality.
  
Given a linear space $X \subseteq \R^N$ with the induced inner product $(\cdot,\cdot)$,
a symmetric matrix $A$ that acts positive definitely on $X$, and a potential $V_0: X \to \R$,
we consider the probability measure $\nu_0$ with expectation
\begin{equation} \label{e:nu0-A}
  \E_{\nu_0}F \propto \int_X e^{-\frac12(\zeta,A\zeta)-V_0(\zeta)} \, F(\zeta)\, d\zeta.
\end{equation}
We call the set $\Lambda= \{1,\dots,N\}$ the \emph{index space} and the space $X$ the \emph{field space};
see also Figure~\ref{fig:LambdaX}.
Let $Q_t = e^{-tA/2}$ be the \emph{heat semigroup} associated with $A$
(acting on elements $\varphi \in X$, i.e., functions $\varphi: \Lambda \to \R$
on the index space), set
\begin{equation}
  \dot C_t = Q_t^2 = e^{-tA}, \qquad C_t = \int_0^t \dot C_s \, ds,
\end{equation}
and denote by $\EE_{C_s}$ the expectation of the Gaussian measure with covariance $C_s$.
For $t>s>0$, we define the \emph{renormalised potential} $V_t$,
the \emph{renormalisation semigroup} $\PP_{s,t}$
(acting on functions $F:X \to \R$ on the field space),
and the \emph{renormalised measure} $\nu_t$ by
\begin{align} \label{e:V-def}
  e^{-V_t(\varphi)} &= \EE_{C_t}(e^{-V_0(\varphi+\zeta)}),
  \\
  \label{e:P-def}
  \PP_{s,t}F(\varphi) &= e^{V_t(\varphi)} \EE_{C_t-C_s}(e^{-V_s(\varphi+\zeta)} F(\varphi+\zeta)),
  \\
  \label{e:nu-def}
  \E_{\nu_t} F = \PP_{t,\infty}F(0) &= e^{V_\infty(0)} \EE_{C_\infty-C_t} (e^{-V_t(\zeta)} F(\zeta)),
\end{align}
where $\varphi \in X$, the expectation $\EE_{C_t}$ applies to $\zeta$,
and it is natural to define $\E_{\nu_\infty} F = F(0)$.
Essentially equivalently to \eqref{e:V-def}, $V_t$ solves the Polchinski equation; see \eqref{e:polchinski} below.

In what follows, we will impose the following \emph{ergodicity assumption} on the semigroup $\PP$:
For all bounded smooth functions $F: X \to \R$ and $g:\R \to \R$,
\begin{equation} \label{e:ergodicity}
  \E_{\nu_t}g(\PP_{0,t}F) \to g(\E_{\nu_0}F) \quad \text{as $t\to\infty$.}
\end{equation}
Like the ergodicity assumption in the Bakry--\'Emery theory (see \cite{MR3155209,MR1845806}),
this assumption is \emph{qualitative} and easily seen to be satisfied in all examples of interest. 

The following theorem bounds the Log-Sobolev constant of the measure $\nu_0$.
For its statement,
recall that the relative entropy of $F: X \to \R_+$ with respect to $\nu_0$ is
given by
\begin{equation}
  \ent_{\nu_0}(F) = \E_{\nu_0} \Phi(F) - \Phi(\E_{\nu_0} F), \qquad \Phi(x) = x\log x,
\end{equation}
where $0\log 0 = 0$.
We write $\nabla$ for the gradient on $X$ and $(\nabla F)^2 = (\nabla F, \nabla F)$;
thus in particular if $X=\R^N$ then $(\nabla F)^2 = \sum_{i=1}^N (\ddp{F}{\varphi_i})^2$.

\begin{figure}
  \begin{center}
    \def\svgscale{0.9}
\begingroup%
  \makeatletter%
  \providecommand\color[2][]{%
    \errmessage{(Inkscape) Color is used for the text in Inkscape, but the package 'color.sty' is not loaded}%
    \renewcommand\color[2][]{}%
  }%
  \providecommand\transparent[1]{%
    \errmessage{(Inkscape) Transparency is used (non-zero) for the text in Inkscape, but the package 'transparent.sty' is not loaded}%
    \renewcommand\transparent[1]{}%
  }%
  \providecommand\rotatebox[2]{#2}%
  \newcommand*\fsize{\dimexpr\f@size pt\relax}%
  \newcommand*\lineheight[1]{\fontsize{\fsize}{#1\fsize}\selectfont}%
  \ifx\svgwidth\undefined%
    \setlength{\unitlength}{396.23125186bp}%
    \ifx\svgscale\undefined%
      \relax%
    \else%
      \setlength{\unitlength}{\unitlength * \real{\svgscale}}%
    \fi%
  \else%
    \setlength{\unitlength}{\svgwidth}%
  \fi%
  \global\let\svgwidth\undefined%
  \global\let\svgscale\undefined%
  \makeatother%
  \begin{picture}(1,0.3217818)%
    \lineheight{1}%
    \setlength\tabcolsep{0pt}%
    \put(0,0){\includegraphics[width=\unitlength,page=1]{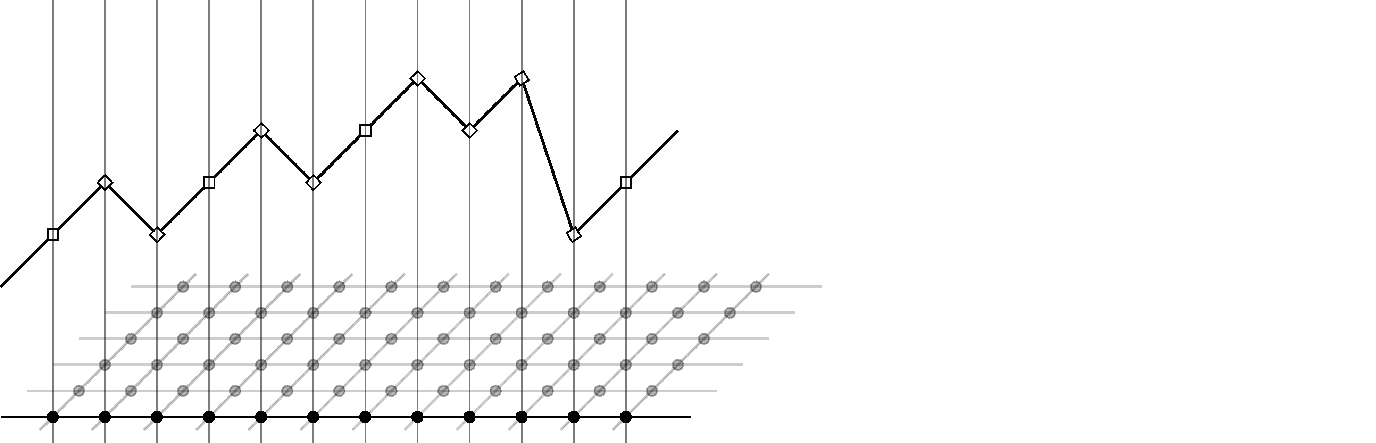}}%
    \put(0.52397164,0.0312606){\color[rgb]{0,0,0}\makebox(0,0)[lt]{\small\smash{\begin{tabular}[t]{l}index space $\Lambda$\end{tabular}}}}%
    \put(0.46995616,0.2659528){\color[rgb]{0,0,0}\makebox(0,0)[lt]{\small\smash{\begin{tabular}[t]{l}field space $X$\end{tabular}}}}%
    \put(0,0){\includegraphics[width=\unitlength,page=2]{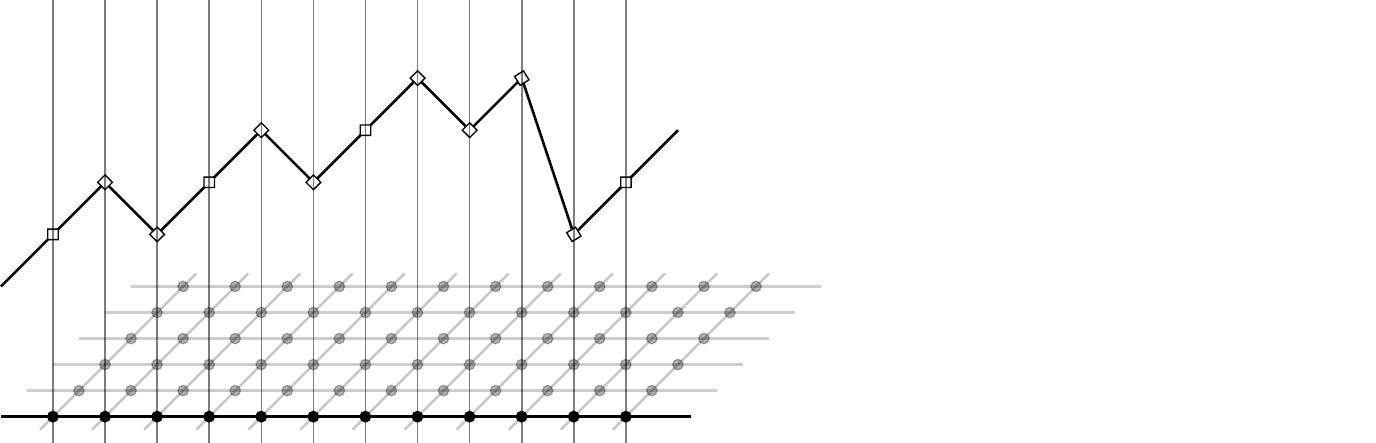}}%
    \put(0.80262337,0.26465877){\color[rgb]{0,0,0}\makebox(0,0)[lt]{\scriptsize\smash{\begin{tabular}[t]{l}original\\semigroup\end{tabular}}}}%
    \put(0.88097328,0.16183732){\color[rgb]{0,0,0}\makebox(0,0)[lt]{\scriptsize\smash{\begin{tabular}[t]{l}Polchinski\\semigroup\end{tabular}}}}%
    \put(0.80346621,0.06774123){\color[rgb]{0,0,0}\makebox(0,0)[lt]{\scriptsize\smash{\begin{tabular}[t]{l}heat semigroup\end{tabular}}}}%
  \end{picture}%
\endgroup%

    \caption{The heat semigroup $Q_t$ acts on the index space $\Lambda = \{1,\dots,N\}$, i.e., `horizontally.'
      In our primary applications, the index space $\Lambda$ is identified with a finite approximation
      to $\Z^d$ or $\R^d$ and $A$ is the Laplacian on $\Lambda$.
      The original semigroup with Dirichlet form $\E_{\nu_0}(\nabla F)^2$ acts on the field space $X \subseteq \R^\Lambda$.
      It acts `vertically' in the sense that the principal part of its generator is the
      standard Laplacian on $X$, i.e., $\Delta_{\id}$ in the notation \eqref{e:Deltawdef}.
      The Polchinski renormalisation semigroup $\PP_{s,t}$ also acts on field space $X$, but it acts `diagonally'
      in the sense that the principal part of its generator is time dependent and given in terms of the heat kernel
      as $\Delta_{Q_t^2}$ (see \eqref{e:polchinski-L}). 
    \label{fig:LambdaX}}
\end{center}
\end{figure}

\begin{theorem} \label{thm:LSI-heat}
  In the set-up above, assume \eqref{e:ergodicity},
  let $\lambda > 0$ be the smallest eigenvalue of $A$,
  suppose there are real numbers $\dot\mu_t$ (possibly negative) such that for all $t \geq 0$,
  as quadratic forms on $X$,
  \begin{equation} \label{e:assVt-heat}
    Q_t\He V_t(\varphi) Q_t\geq \dot\mu_t \id,
    \qquad \text{where $Q_t = e^{-tA/2}$,}
  \end{equation}
  and define $\mu_t = \int_0^t \dot\mu_s \, ds$.
  Then $\nu_0$ satisfies the Log-Sobolev inequality
  \begin{equation} \label{e:gammadef-heat}
    \ent_{\nu_0}(F)
    \leq \frac{2}{\gamma} \E_{\nu_0}(\nabla \sqrt{F})^2,
    \qquad
    \frac{1}{\gamma} = \int_0^\infty e^{-\lambda t-2\mu_t} \, dt,
  \end{equation}
  provided the integral is finite.
\end{theorem}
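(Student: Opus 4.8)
The plan is to interpolate along the Polchinski flow: for a nonnegative test function $F$ on $X$, define $F_t = \PP_{0,t}F$ and track the quantity $\E_{\nu_t}\Phi(F_t)$, where $\Phi(x)=x\log x$. By the ergodicity assumption \eqref{e:ergodicity} applied with $g=\Phi$, we have $\E_{\nu_t}\Phi(F_t) \to \Phi(\E_{\nu_0}F)$ as $t\to\infty$, while at $t=0$ we get $\E_{\nu_0}\Phi(F)$. Hence $\ent_{\nu_0}(F) = -\int_0^\infty \frac{d}{dt}\E_{\nu_t}\Phi(F_t)\,dt$, and the whole proof reduces to differentiating this entropy functional along the flow and bounding the derivative by (a multiple of) the Dirichlet form $\E_{\nu_0}(\nabla\sqrt F)^2$.

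The first real step is to compute $\frac{d}{dt}\E_{\nu_t}\Phi(F_t)$. Using the definitions \eqref{e:V-def}–\eqref{e:nu-def}, $\E_{\nu_t}G = e^{V_\infty(0)}\EE_{C_\infty-C_t}(e^{-V_t(\zeta)}G(\zeta))$, and one checks that $V_t$ solves the Polchinski equation $\dot V_t = \frac12\Delta_{\dot C_t}V_t - \frac12(\nabla V_t)_{\dot C_t}^2$ (with $\dot C_t = Q_t^2$), while $F_t$ evolves by the linear semigroup generated by the associated operator $\mathcal L_t G = \frac12\Delta_{\dot C_t}G - (\nabla V_t, \nabla G)_{\dot C_t}$ — this is the generator whose principal part is $\Delta_{Q_t^2}$ as indicated after \eqref{e:polchinski-L}. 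The key identity I expect is that the combined time derivative collapses to a single "carré du champ" term:
\begin{equation*}
  \frac{d}{dt}\E_{\nu_t}\Phi(F_t) = -\frac12\,\E_{\nu_t}\!\big(\Phi''(F_t)\,(\nabla F_t, \dot C_t \nabla F_t)\big) = -2\,\E_{\nu_t}\big(\nabla\sqrt{F_t}, \dot C_t \nabla\sqrt{F_t}\big),
\end{equation*}
using $\Phi''(x)=1/x$ and $\Phi''(F)(\nabla F)^2 = 4(\nabla\sqrt F)^2$; the terms involving $\dot V_t$ and the drift in $\mathcal L_t$ cancel exactly because $\nu_t$ is the instantaneous invariant measure of $\mathcal L_t$ in the appropriate sense (reversibility against the Gaussian weight $e^{-V_t}\EE_{C_\infty-C_t}$). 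So $\ent_{\nu_0}(F) = 2\int_0^\infty \E_{\nu_t}(\nabla\sqrt{F_t}, \dot C_t\nabla\sqrt{F_t})\,dt$.

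The second step, which I expect to be the main obstacle, is to propagate the gradient: one must bound $\E_{\nu_t}(\nabla\sqrt{F_t}, \dot C_t\nabla\sqrt{F_t})$ in terms of the $t=0$ Dirichlet form. Writing $G_t$ for a component of $\nabla\sqrt{F_t}$ (more precisely, working with the vector $\nabla F_t$ and the chain rule), the evolution of $\nabla F_t$ under $\mathcal L_t$ picks up exactly the Hessian term $\He V_t$, and the hypothesis \eqref{e:assVt-heat}, $Q_t\He V_t(\varphi)Q_t \geq \dot\mu_t\,\id$, is precisely what is needed to get a Grönwall-type contraction: conjugating by $Q_t$ to match the $\dot C_t = Q_t^2$ weighting, one obtains
\begin{equation*}
  \frac{d}{dt}\,\E_{\nu_t}\big(\nabla F_t, \dot C_t\,\nabla F_t\big) \leq -2\dot\mu_t\,\E_{\nu_t}\big(\nabla F_t, \dot C_t\,\nabla F_t\big) - \lambda\,\E_{\nu_t}\big(\nabla F_t, \dot C_t\,\nabla F_t\big) + (\text{negative diffusive term}),
\end{equation*}
where the extra $-\lambda$ comes from $\dot C_t = e^{-tA} \leq e^{-\lambda t}\cdot(\text{decay})$ together with $\lambda$ being the bottom of the spectrum of $A$ (this is the place where the $e^{-\lambda t}$ in \eqref{e:gammadef-heat} is born, and one must be careful: the cleanest route is to track $\E_{\nu_t}(\nabla F_t, e^{sA}\dot C_{t}\nabla F_t)$ for suitable shifts, or equivalently to note $\dot C_{t} = Q_t A Q_t \cdot A^{-1}$-type bounds, so that the heat semigroup's own decay is extracted). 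Integrating this differential inequality gives $\E_{\nu_t}(\nabla F_t,\dot C_t\nabla F_t) \leq e^{-\lambda t - 2\mu_t}\,\E_{\nu_0}(\nabla F_0, \dot C_0 \nabla F_0)$ with $\dot C_0 = \id$, and since $F_0 = F$ this is $e^{-\lambda t-2\mu_t}\,\E_{\nu_0}(\nabla\sqrt F)^2$ up to the factor $4$ already accounted for. Substituting into $\ent_{\nu_0}(F) = 2\int_0^\infty(\cdots)\,dt$ yields $\ent_{\nu_0}(F) \leq \big(2\int_0^\infty e^{-\lambda t-2\mu_t}\,dt\big)\,\E_{\nu_0}(\nabla\sqrt F)^2 = \frac{2}{\gamma}\E_{\nu_0}(\nabla\sqrt F)^2$, as claimed. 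The subtle points to get right are the justification of differentiating under the expectation (covered by the blanket smoothness/boundedness assumption and finiteness of the integral), the exact cancellation in the entropy derivative (an integration by parts against the Gaussian $\EE_{C_\infty-C_t}$), and the correct bookkeeping of the $Q_t$-conjugation so that hypothesis \eqref{e:assVt-heat} enters in exactly the normalized form stated.
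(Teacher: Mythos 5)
Your proposal follows the same route as the paper: decompose the entropy along the Polchinski flow as $\ent_{\nu_0}(F) = 2\int_0^\infty \E_{\nu_t}(\nabla\sqrt{\PP_{0,t}F})^2_{\dot C_t}\,dt$, then propagate the gradient via a Bakry--\'Emery-type commutation estimate using the Polchinski generator $\LL_t$, so that the hypothesis $Q_t\He V_t Q_t\geq\dot\mu_t\id$ gives a Gr\"onwall contraction (the paper runs this pointwise via $\psi(s)=e^{2\lambda_s-2\lambda_t}\PP_{s,t}[(\nabla\sqrt{\PP_{0,s}F})^2_{\dot C_s}]$ rather than on the expectation, but the two are equivalent). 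The one spot you leave vague --- how exactly the $e^{-\lambda t}$ is extracted --- is cleanest if you simply differentiate the time-dependent weight $\dot C_t=Q_t^2$ in the gradient functional: the term $(\nabla\sqrt{F_t})^2_{\ddot C_t}$ with $\ddot C_t=-AQ_t^2=-Q_tAQ_t$ combines with the Hessian term into $Q_t\bigl[Q_t\He V_tQ_t+\tfrac12A\bigr]Q_t\geq(\dot\mu_t+\tfrac12\lambda)Q_t^2$, and nothing like "$\dot C_t=Q_tAQ_tA^{-1}$" or shifted weights $e^{sA}\dot C_t$ is needed. Also note the ``cancellation'' in the entropy derivative is not reversibility of $\nu_t$ for $\LL_t$ in the usual sense, but the duality $\E_{\nu_t}\PP_{s,t}F=\E_{\nu_s}F$ (equivalently $-\partial_t\E_{\nu_t}F=\E_{\nu_t}\LL_tF$), which is what actually makes the $\dot V_t$ and drift terms disappear.
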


The proof of Theorem~\ref{thm:LSI-heat}, given in Section~\ref{sec:LSI},
shares significant elements with the celebrated Bakry--\'Emery argument,
but with the crucial difference that it uses the time-dependent Polchinski semigroup \eqref{e:P-def}
rather than the original semigroup, associated with the Dirichlet form
$\E_{\nu_0}(\nabla F)^2$, to decompose the relative entropy.
The above version of our criterion relies on the particular decomposition of the matrix $C_\infty=A^{-1}$ in terms of the heat semigroup $\dot C_t = e^{-tA}$.
In Section~\ref{sec:LSI}, we also consider variations of the criterion that apply to other decompositions.

To apply the theorem, the main task is to verify \eqref{e:assVt-heat}.
It is not difficult to see that the renormalised potential $V_t$ solves
the \emph{Polchinski equation}
(see Section~\ref{sec:intro-discuss} for its history)
\begin{equation}
  \label{e:polchinski}
  \partial_t V_t = \frac12 \Delta_{\dot C_t} V_t - \frac12 (\nabla V_t)_{\dot C_t}^2,
\end{equation}
where we use the notation (and with $w=\id$ if the argument $w$ is omitted)
\begin{equation} \label{e:Deltawdef}
  (u,v)_w = \sum_{i,j}w_{ij} u_iv_j,
  \quad
  (\nabla F)_{w}^{2}= (\nabla F,\nabla F)_w, 
  \quad
  \Delta_w F = (\nabla ,\nabla )_w F. 
\end{equation}
In general, verifying \eqref{e:assVt-heat}
is a challenging problem because the Polchinski equation is a non-linear PDE
in $N$ dimensions, where in the examples of main interest $N\to\infty$.
Nonetheless, we believe that the required estimates are true in many relevant examples,
including spin systems near the critical point.
In particular, in Section~\ref{sec:SG}, we verify the condition for the continuum sine-Gordon model
by analysing the Polchinski equation.

To illustrate our new criterion,
we note briefly that \eqref{e:assVt-heat} is not hard to verify for log-concave measures,
in which case we recover 
the Bakry--\'Emery criterion as a special case.

\begin{example}[Bakry--\'Emery criterion] \label{ex:BE}
Consider a probability measure $\nu_0$ with expectation
\begin{equation} \label{e:measureH}
  \E_{\nu_0} F \propto \int_{\R^N} e^{-H(\zeta)} F(\zeta) \, d\zeta,
\end{equation}
where $\He H \geq \lambda \id$ holds uniformly for some $\lambda>0$.
Equivalently, $\nu_0$ can be written as in \eqref{e:nu0-A}:
\begin{equation}
  H(\zeta) = \frac12 (\zeta,A\zeta) + V_0(\zeta),
  \qquad
  \text{with $A=\lambda \id$ and $V_0$ convex.}
\end{equation}
It follows that $V_t$ is convex for all $t\geq 0$ (see, e.g., \cite[Theorem~4.3]{MR0450480}).
Hence $\mu_t \geq 0$ for all $t$ and thus $\gamma \geq \lambda$ in \eqref{e:gammadef-heat}.
This is the Bakry--\'Emery criterion.

We remark that an alternative proof that $V_t$ remains convex for $t>0$ can be deduced
from the maximum principle for symmetric tensors \cite[Theorem~9.1]{MR664497}.
This argument is in fact analogous to the proof
that positive Ricci curvature remains positive under the Ricci flow
in \cite{MR664497}.
\end{example}

Theorem~\ref{thm:LSI-heat} can be considered a multiscale version of the Bakry--\'Emery criterion
in which the global convexity assumption $\inf_\varphi \He V_0(\varphi) \geq 0$,
which is equivalent to $\inf_{t\geq 0}\inf_\varphi \He V_t(\varphi) \geq 0$, is replaced by the
assumption \eqref{e:assVt-heat} on the Hessians of the effective potential $V_t$ at each scale $t$.
We emphasise that these Hessians are \emph{not} required be positive definite;
and in fact in the example of the continuum sine-Gordon model which we consider in Section~\ref{sec:intro-sg} below,
the effective potential remains non-convex at all scales $t>0$.
We also emphasise that the application of the heat kernel $Q_t$ to $\He V_t(\varphi)$ in \eqref{e:assVt-heat}
has an important smoothing effect.
In particular,
for the sine-Gordon model, we will see that this smoothing effect is essential when $\beta>4\pi$.

\begin{remark} \label{rk:uniqueness}
We have defined the renormalised potential $V_t$ as the convolution solution \eqref{e:V-def} to the Polchinski equation \eqref{e:polchinski}.
Since equivalently $Z_t = e^{-V_t}$ solves the heat equation $\partial_tZ_t = \frac12\Delta_{\dot C_t}Z_t$,
the Polchinski equation has a unique solution under weak conditions.
Then one may equivalently solve \eqref{e:polchinski} instead of \eqref{e:V-def};
for an example for which this is useful, see Section~\ref{sec:SG}.
\end{remark}

\begin{remark} \label{rk:metric}
We remark that with the time-dependent metric $g_t = e^{+tA}$ on $X$ and $\nabla_{g_t}$ and $\Delta_{g_t}$ defined
as in Riemannian geometry, i.e., $\nabla_{g_t} = g_t^{-1} \nabla$ and $\Delta_{g_t}$ the Laplace-Beltrami operator,
one has $\Delta_{\dot C_t} = \Delta_{g_t}$ and $(\nabla F)_{\dot C_t}^2 = (\nabla_{g_t} F)_{g_t}^2$.
The condition \eqref{e:assVt-heat} then becomes
$\He_{g_t} V_t \geq \dot \mu_t g_t$.
\end{remark}

\subsection{Continuum sine-Gordon model}
\label{sec:intro-sg}

In Section~\ref{sec:SG}, we apply Theorem~\ref{thm:LSI-heat} to prove asymptotically sharp
Log-Sobolev inequalities for
Glauber and Kawasaki dynamics of the massive continuum sine-Gordon model with $\beta < 6\pi$.
The massive sine-Gordon model is a fundamental example of a two-dimensional interacting Euclidean Quantum Field Theory,
i.e., a non-Gaussian probability measure on $\cD'(\R^2)$ sometimes \emph{formally} written as
\begin{equation}
  \frac{1}{Z}
  \exp\qa{-
    \int_{\R^2} \pa{ \frac12 \varphi(-\Delta\varphi) + \frac12 m^2\varphi(x)^2 
    + 2z : \cos(\sqrt{\beta}\varphi(x)) : }  \, dx} \prod_{x\in\R^2} d\varphi(x).
\end{equation}
Here $\Delta$ is the Laplacian on $\R^2$,
and the notation $:$ denotes Wick ordering, i.e., that $z$ is formally multiplied
by a divergent constant (making the microscopic potential extremely non-convex);
see \eqref{e:sg-Heps}-\eqref{e:sg-zeps} below for the precise definition that we will use.
The Glauber dynamics of the sine-Gordon model (also called dynamical sine-Gordon model)
can be realised as a singular SPDE that was recently constructed
using the theory of regularity structures.
References on the sine-Gordon model are provided further below.

For clarity, we consider the model in a lattice approximation of a two-dimensional torus,
and prove estimates uniformly in the lattice spacing and in the size of the torus.
Therefore, from now on, let $d=2$, let $\Omega_L=L \T^d$ be the torus of side length $L>0$,
and let $\Omega_{\epsilon,L} = \Omega_L \cap \epsilon \Z^d$ be its lattice approximation with mesh size $\epsilon>0$
(where we always assume $L$ is a multiple of $\epsilon$).
The continuum sine-Gordon model $\nu_{\epsilon,L}$ in the lattice approximation
is the probability measure on $\R^{\Omega_{\epsilon,L}}$
with density proportional to $e^{-H_{\epsilon,L}(\varphi)}$
where $H_{\epsilon,L}$ is defined for $\varphi : \Omega_{\epsilon,L} \to \R$ by
\begin{equation} \label{e:sg-Heps}
  H_{\epsilon,L}(\varphi)
  = {\epsilon^d}\sum_{x\in \Omega_{\epsilon,L}} \pbb{ \frac12 \varphi_x(-\Delta^\epsilon\varphi)_x
  + \frac{1}{2}m^2 \varphi_x^2
  + 2z_\epsilon \cos(\sqrt{\beta}\varphi_x)
  },
\end{equation}
with \emph{divergent} coupling constant
\begin{equation} \label{e:sg-zeps}
   z_\epsilon = z \epsilon^{-\beta/4\pi},
\end{equation}
and where
$(\Delta^\epsilon \varphi)_x = \epsilon^{-2} \sum_{y\sim x} (\varphi_y-\varphi_x)$
is the discretised Laplacian, i.e., the sum $y \sim x$
is over nearest neighbour vertices $y$ of $x$ in $\epsilon\Z^d$.
Under suitable assumptions,
this normalisation ensures that, for $0<\beta<8\pi$,
the measures $\nu_{\epsilon,L}$ converge weakly to a non-Gaussian probability measure
on $\cD'(\R^2)$ as $\epsilon\to 0$ and $L\to\infty$; see the discussion after the statement
of the theorems below.

Our first theorem is a uniform Log-Sobolev inequality for the Glauber dynamics
of the massive  sine-Gordon measure $\nu_{\epsilon,L}$
(with dimension always $d=2$). The Glauber Dirichlet form is given by
\begin{equation}   \label{e:Dirichlet-eps}
  \DD_{\epsilon,L}(F) = \frac{1}{\epsilon^2} \sum_{x\in \Omega_{\epsilon,L}}
  \E_{\nu_{\epsilon,L}} \qa{ \pa{\ddp{F}{\varphi_x}}^2 },
\end{equation}
corresponding to the system of SDEs
\begin{equation} \label{e:sg-SDE}
  \ddp{}{t}\varphi_x^\epsilon
  = (\Delta^\epsilon\varphi^\epsilon )_x
  + m^2 \varphi^\epsilon_x
  + \epsilon^{-\beta/4\pi}2z\sqrt{\beta} \sin(\sqrt{\beta}\varphi_x^\epsilon)
  + \sqrt{2} \dot W_x^\epsilon,
\end{equation}
where
$\dot W^\epsilon$ is space-time white noise (with discretised space), i.e.,
the $(W_x^\epsilon)_{x\in\Omega_{\epsilon,L}}$ are independent Brownian motions with quadratic variation $\avg{W_x^\epsilon}(t) = t/\epsilon^2$.

\begin{theorem} \label{thm:sg-4pi}
  Fix $\beta < 6\pi$, and let $L>0$, $m>0$, and $z\in\R$. Then there
  is $\gamma(\beta,z,m,L) > 0$ independent of $\epsilon>0$ such that, for all $F\geq 0$,
  \begin{equation}
    \ent_{\nu_{\epsilon,L}}(F) \leq \frac{2}{\gamma(\beta,z,m,L)} \DD_{\epsilon,L}(\sqrt{F}).
  \end{equation}
  Moreover, there is $\delta_\beta>0$ such that if $Lm \geq 1$ and $|z|m^{-2+\beta/4\pi} \leq \delta_\beta$, then
  \begin{equation} \label{e:sg-asymp}
    \gamma(\beta,z,m,L)
    \geq
    m^2 - O_\beta(m^{\beta/4\pi}|z|)
    ,
  \end{equation}
  where the constant $O_\beta$ depends on $\beta$ only (and is thus uniform in $L \geq 1/m$).  
\end{theorem}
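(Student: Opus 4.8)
The plan is to apply Theorem~\ref{thm:LSI-heat} with $A = m^2\id - \Delta^\epsilon$ (acting on $X = \R^{\Omega_{\epsilon,L}}$ with the $\epsilon^d$-weighted inner product), $V_0(\varphi) = 2z_\epsilon\epsilon^d\sum_x \cos(\sqrt\beta\varphi_x)$, and the heat decomposition $\dot C_t = e^{-tA}$. The smallest eigenvalue of $A$ is $\lambda \geq m^2$, so once we produce constants $\dot\mu_t$ with $Q_t\He V_t(\varphi)Q_t \geq \dot\mu_t\id$ and $\int_0^\infty e^{-\lambda t - 2\mu_t}\,dt < \infty$, Theorem~\ref{thm:LSI-heat} gives the Log-Sobolev inequality with $1/\gamma = \int_0^\infty e^{-\lambda t - 2\mu_t}\,dt$; the Dirichlet form $\E_{\nu_0}(\nabla F)^2$ in the weighted inner product is exactly $\DD_{\epsilon,L}(F)$ in \eqref{e:Dirichlet-eps}. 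The ergodicity assumption \eqref{e:ergodicity} is immediate here since everything is finite-dimensional with a bounded smooth perturbation of a Gaussian.

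The heart of the argument is the analysis of the Polchinski equation \eqref{e:polchinski} for the sine-Gordon potential, carried out in Section~\ref{sec:SG}. First I would record that, because $V_0$ is a (complex) linear combination of exponentials $e^{\pm i\sqrt\beta\varphi_x}$, each renormalisation step $e^{-V_t} = \EE_{C_t}(e^{-V_0(\cdot+\zeta)})$ preserves a Wick-exponential structure up to the expansion in powers of $z$: writing $V_t = \sum_{n\geq 1} z^n V_t^{(n)}$, one gets an explicit hierarchy with $V_t^{(1)}$ a sum over $x$ of Wick-ordered cosines with coefficient $e^{-\beta c_t(0)/2 + \beta c_0(0)/2}$ (so that the divergent normalisation $z_\epsilon$ is exactly absorbed, making $V_t^{(1)}$ bounded uniformly in $\epsilon$ when $\beta < 8\pi$), and the higher $V_t^{(n)}$ built from iterated integrals of the covariance $\dot C_s$. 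The key quantitative input is decay of the covariance kernel $\dot C_t(x,y) = e^{-tA}(x,y)$: it is the massive heat kernel on the lattice torus, with $\dot C_t(0) \sim (4\pi t)^{-1}$ for small $t$ and exponential decay $e^{-m^2 t}$ for large $t$. I would then bound $\|Q_t\He V_t^{(n)}(\varphi)Q_t\|$ scale by scale; the factor $Q_t = e^{-tA/2}$ contributes the crucial smoothing — in Fourier/operator terms it converts two derivatives acting on a cosine (which produce a factor $\beta$ times an oscillatory rank-one piece at each site) into a kernel whose operator norm is controlled by $\sup_x \dot C_t(x,x)^{?}$-type quantities rather than by pointwise derivatives. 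A careful power count shows the Hessian bound behaves like $\dot\mu_t \gtrsim -|z|\,t^{-\beta/8\pi}\wedge(\text{something integrable at }\infty)$ near $t=0$, where the exponent $\beta/8\pi$ comes from the two extra factors of $\dot C_t(0)^{1/2}$ from the smoothing acting against the $\beta\dot C_t(0)$ growth of the bare cosine Hessian. This is integrable at $0$ precisely when $\beta/8\pi < 1$, but to then run the higher-order ($n\geq 2$) terms and close the bootstrap on $\|V_t^{(n)}\|$ (which lose further powers of $\dot C_t(0)$) one needs the stronger restriction $\beta < 6\pi$; this is the well-known threshold at which the second-order term in the sine-Gordon renormalisation group becomes borderline, and matching it is the main obstacle.

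With $\dot\mu_t$ in hand, $\mu_t = \int_0^t\dot\mu_s\,ds$ is bounded below by $-C|z|$ uniformly in $t$ and $\epsilon$ (the short-time singularity being integrable), so $1/\gamma = \int_0^\infty e^{-\lambda t - 2\mu_t}\,dt \leq e^{2C|z|}\int_0^\infty e^{-m^2 t}\,dt = e^{2C|z|}/m^2 < \infty$, giving the first, non-quantitative assertion with $\gamma$ independent of $\epsilon$. For the asymptotically optimal bound \eqref{e:sg-asymp}, I would be more careful in the small-$|z|$ regime: under $|z|m^{-2+\beta/4\pi} \leq \delta_\beta$ one can show $\mu_t \geq -O_\beta(|z|m^{\beta/4\pi - 2})\cdot(1 - e^{-m^2 t})$ or a comparable bound, so that $1/\gamma \leq \int_0^\infty e^{-m^2 t}(1 + O_\beta(|z|m^{\beta/4\pi}t) + \dots)\,dt = m^{-2}(1 + O_\beta(|z|m^{\beta/4\pi - 2}))$, which inverts to $\gamma \geq m^2 - O_\beta(m^{\beta/4\pi}|z|)$; the condition $Lm \geq 1$ enters only to make the torus covariance bounds uniform in $L$, since for $Lm \geq 1$ the infrared cutoff is dominated by the mass rather than the torus size. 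I would track explicitly that none of the constants $C$, $O_\beta$, $\delta_\beta$ depend on $\epsilon$ or $L$ — this uniformity is automatic from the covariance estimates, which only ever use $\dot C_t(0) \leq C/(t\vee \epsilon^2)$-type bounds and $\|\dot C_t\|_{1\to 1} \leq e^{-m^2 t}$, both uniform in $\epsilon$ and $L$.
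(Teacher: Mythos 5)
Your overall strategy is the same as the paper's: apply Theorem~\ref{thm:LSI-heat} with the heat-kernel decomposition of $(-\Delta^\epsilon+m^2)^{-1}$, control $Q_t\He V_t Q_t$ by expanding the solution of the Polchinski equation in powers of $z$ (equivalently, in the number of charges, as in the Brydges--Kennedy representation), and integrate the resulting $\dot\mu_t$. Your power counting for the first-order term and the final passage from $\mu^*=O_\beta(|z|m^{-2+\beta/4\pi})$ to $\gamma\geq m^2-O_\beta(m^{\beta/4\pi}|z|)$ match the paper, as does the identification of $\beta<6\pi$ as the second threshold, where the $n=2$ neutral (dipole) contribution forces you to exploit the smoothing of $Q_t$ and the difference structure $(Q_tf(x)-Q_tf(y))^2$ against the singular kernel $e^{\beta C_t(x,y)}-1$. (One small misattribution: the short-time singularity $t^{-\beta/8\pi}$ of the first-order term comes from the Wick-ordering factor $e^{-\beta C_t(0,0)/2}$ absorbed into the running coupling $z_t$, not from the two factors of $Q_t$; the smoothing only becomes essential at $n=2$ once $\beta\geq 4\pi$.)

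There is, however, a genuine gap in your treatment of the first, non-quantitative assertion for arbitrary $z$. The expansion bounds (Propositions~\ref{prop:Vtbd}, \ref{prop:Vtbd-thres1-2}, \ref{prop:Vtbd-thres1-no2}) only control $\He V_t$ while the dimensionless running coupling $\lz_t\asymp |z|(\epsilon\ell_t)^{2-\beta/4\pi}$ stays below a fixed $\beta$-dependent threshold; since $\epsilon\ell_t$ increases to $1/m$, this fails at large scales unless $|z|m^{-2+\beta/4\pi}$ is small, which is exactly the hypothesis \eqref{e:small} you are not allowed to assume for the first statement. Your claim that $\mu_t\geq -C|z|$ uniformly in $t$ therefore does not follow from the expansion; indeed the paper does not prove such an $L$-uniform bound for large $z$. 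What is needed (Section~\ref{sec:zlarge} of the paper) is a separate argument: stop the expansion at the scale $t_0$ where $|\lz_{t_0}|$ reaches the threshold (or at $t_{m,L}$), and for $t\geq t_0$ use the semigroup identity \eqref{e:PtHessV},
\begin{equation}
  (f,\He V_tf) = \PP_{t_0,t}(f,\He V_{t_0}f) - \pa{\PP_{t_0,t}((f,\nabla V_{t_0})^2) - (\PP_{t_0,t}(f,\nabla V_{t_0}))^2},
\end{equation}
together with the Markov property of $\PP_{t_0,t}$ and uniform bounds on $Q_{t_0}\He V_{t_0}$ and $Q_{t_0}\nabla V_{t_0}$, to get $\dot\mu_t\geq -O_{\beta,z,m,L}(\epsilon^2\vartheta_t^2)$; this costs constants depending on $L$ but keeps them uniform in $\epsilon$, which is all the theorem claims. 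Separately, the core of the proof for $4\pi\leq\beta<6\pi$ — the explicit formula for $\tilde V_t^{(2)}$, the $\ell^1$-type bounds on $U_t=e^{\beta C_t}-1$ against first and second differences of the heat kernel, and the induction closing the terms $n\geq 5$ — is only gestured at in your sketch; this is where essentially all the technical work of the paper lies.
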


Our next theorem is a (conservative) Kawasaki version of the previous result.
We thus consider the measure $\nu_{\epsilon,L}^{0}$ obtained by constraining the mean
spin of the measure $\nu_{\epsilon,L}$ to  $\sum_{x\in\Omega_{\epsilon,L}}\varphi_x=0$,
i.e., $\nu_{\epsilon,L}^{0}$ is supported on $\{\varphi : \sum_x \varphi_x=0\}$.
(The same proof also works for arbitrary nonzero mean of $\varphi$.)
The Dirichlet form for Kawasaki dynamics with invariant measure $\nu_{\epsilon,L}^0$ is defined by
\begin{equation}
  \label{e:Dirichlet-Kawa-eps}
  \DD_{\epsilon,L}^0(F) = \frac{1}{\epsilon^{4}} \sum_{x\sim y\in \Omega_{\epsilon,L}}
  \E_{\nu^0_{\epsilon,L}} \qa{ \pa{\ddp{F}{\varphi_x}-\ddp{F}{\varphi_y}}^2 }.
\end{equation}

\begin{theorem} \label{thm:sg-4pi-kawasaki}
  Fix $\beta < 6\pi$, and let $L>0$, $m>0$, and $z\in\R$. Then
  there is $\gamma^0(\beta,z,m,L) > 0$ independent of $\epsilon>0$ such that,
  for all $F\geq 0$,
  \begin{equation}
    \ent_{\nu_{\epsilon,L}^0}(F) \leq \frac{2}{\gamma^0(\beta,z,m,L)} \DD_{\epsilon,L}^0(\sqrt{F}).
  \end{equation}
  Moreover, there is $\delta_\beta>0$ such that if $Lm \geq 1$ and $|z|m^{-2+\beta/4\pi} \leq \delta_\beta$, then
  \begin{equation}
    \gamma^0(\beta,z,m,L) \geq
    \frac{(2\pi)^2}{L^2} \pa{m^2 + \frac{(2\pi)^2}{L^2} - O_\beta(m^{\beta/4\pi}|z|)}
    ,
  \end{equation}
  where the constant $O_\beta$ depends on $\beta$ only (and is thus uniform in $L \geq 1/m$).
\end{theorem}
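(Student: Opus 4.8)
The plan is to derive Theorem~\ref{thm:sg-4pi-kawasaki} (and, in the same way, Theorem~\ref{thm:sg-4pi}) from the general criterion Theorem~\ref{thm:LSI-heat}, applied on the \emph{constrained} field space
\[
  X_0 = \{\varphi\in\R^{\Omega_{\epsilon,L}} : \sum_{x}\varphi_x = 0\},
\]
equipped with the inner product $(\varphi,\psi)=\epsilon^d\sum_x\varphi_x\psi_x$ it inherits from $\R^{\Omega_{\epsilon,L}}$ with that scaling, followed by a discrete Poincar\'e inequality that turns the resulting $\ell^2$-type Dirichlet form into the Kawasaki form $\DD_{\epsilon,L}^0$. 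Concretely, I would take $A$ to be the restriction to $X_0$ of $-\Delta^\epsilon+m^2$ and $V_0(\varphi)=2z_\epsilon\epsilon^d\sum_x\cos(\sqrt\beta\,\varphi_x)$ restricted to $X_0$, so that the measure \eqref{e:nu0-A} is exactly $\nu_{\epsilon,L}^0$; the qualitative assumption \eqref{e:ergodicity} for the constrained renormalisation semigroup holds just as in the unconstrained case. The smallest eigenvalue of $A$ on $X_0$ is $\lambda=m^2+\lambda_1^\epsilon$, where $\lambda_1^\epsilon=2\epsilon^{-2}(1-\cos(2\pi\epsilon/L))$ is the smallest nonzero eigenvalue of $-\Delta^\epsilon$ on $\Omega_{\epsilon,L}$; note $\lambda_1^\epsilon\le(2\pi/L)^2$ and $\lambda_1^\epsilon\to(2\pi/L)^2$ as $\epsilon\to0$.

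The main step --- and what I expect to be the genuine obstacle --- is the verification of the Hessian bound \eqref{e:assVt-heat} for the renormalised potentials $V_t$ of this constrained model, with real numbers $\dot\mu_t$ such that $\mu_t=\int_0^t\dot\mu_s\,ds$ obeys $-\mu_t\le O_\beta(m^{\beta/4\pi}|z|)$ uniformly in $\epsilon>0$ (and uniformly in $L\ge 1/m$ provided $|z|m^{-2+\beta/4\pi}\le\delta_\beta$), together with mere finiteness of $\int_0^\infty e^{-\lambda t-2\mu_t}\,dt$ for general $z$. This is precisely the analysis of the Polchinski equation \eqref{e:polchinski} for the Wick-ordered cosine that is carried out in Section~\ref{sec:SG}; the role of $\beta<6\pi$ is to be the regime in which the \emph{smoothed} Hessian $Q_t\He V_t(\varphi)Q_t$ admits such a lower bound (the smoothing by $Q_t$ becoming essential once $\beta>4\pi$). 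The point that lets the constrained case follow from the unconstrained analysis is that passing from $\R^{\Omega_{\epsilon,L}}$ to $X_0$ changes the Gaussian data only by removing the constant mode: this is a rank-one perturbation of the covariance $C_t=\int_0^t e^{-sA}\,ds$, uniform in $\epsilon$, and it modifies $\dot C_s=e^{-sA}$ only in the infrared --- where it merely accelerates the decay from $\sim e^{-m^2 s}$ to $\sim e^{-\lambda s}$ --- while leaving unchanged the ultraviolet behaviour near $s=0$ that governs the divergent coupling $z_\epsilon=z\epsilon^{-\beta/4\pi}$ and the smoothing. Hence the renormalisation-group estimates on $V_t$ established for the Glauber model apply with only cosmetic changes on $X_0$, and Theorem~\ref{thm:LSI-heat} with $\lambda=m^2+\lambda_1^\epsilon$ yields
\[
  \ent_{\nu_{\epsilon,L}^0}(F)\ \le\ \frac{2}{\gamma}\,\E_{\nu_{\epsilon,L}^0}\bigl[(\nabla_0\sqrt F)^2\bigr],\qquad \gamma\ \ge\ m^2+\lambda_1^\epsilon-O_\beta(m^{\beta/4\pi}|z|),
\]
where $\nabla_0$ is the gradient on $X_0$ in the inner product above.

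It remains to compare $\E_{\nu_{\epsilon,L}^0}[(\nabla_0\sqrt F)^2]$ with $\DD_{\epsilon,L}^0$. Extending each $F$ on $X_0$ to $\R^{\Omega_{\epsilon,L}}$ so as to be constant in the direction of the constant mode, its coordinates $a_x=\partial_x F$ satisfy $\sum_x a_x=0$, one has $\partial_x F-\partial_y F=a_x-a_y$, and $(\nabla_0 F)^2=\epsilon^{-d}\sum_x a_x^2$. For mean-zero $a$ the discrete Poincar\'e inequality for the combinatorial Laplacian on the torus $\Omega_{\epsilon,L}$ gives
\[
  \sum_{x\sim y\in\Omega_{\epsilon,L}}(a_x-a_y)^2\ \ge\ \epsilon^2\lambda_1^\epsilon\sum_{x}a_x^2
\]
(with the edge convention of \eqref{e:Dirichlet-Kawa-eps}; an ordered-pair convention only improves the constant by a factor $2$), so that applying this with $F$ replaced by $\sqrt F$ and using $d=2$,
\[
  \DD_{\epsilon,L}^0(\sqrt F)\ \ge\ \lambda_1^\epsilon\,\E_{\nu_{\epsilon,L}^0}\bigl[(\nabla_0\sqrt F)^2\bigr].
\]
Combining the three displays gives $\ent_{\nu_{\epsilon,L}^0}(F)\le(2/\gamma^0)\,\DD_{\epsilon,L}^0(\sqrt F)$ with
\[
  \gamma^0\ \ge\ \lambda_1^\epsilon\bigl(m^2+\lambda_1^\epsilon-O_\beta(m^{\beta/4\pi}|z|)\bigr),
\]
which is positive uniformly in $\epsilon$ (since $\lambda_1^\epsilon$ has a positive infimum over the admissible $\epsilon=L/n$, using the uniform finiteness of the Polchinski integral from Section~\ref{sec:SG}) and which, as $\epsilon\to0$ for fixed $L$, tends to $\frac{(2\pi)^2}{L^2}\bigl(m^2+\frac{(2\pi)^2}{L^2}-O_\beta(m^{\beta/4\pi}|z|)\bigr)$, the asymptotically optimal value claimed, the $O(\epsilon^2/L^4)$ discretisation loss in $\lambda_1^\epsilon$ being $o(1)$. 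Finally, Theorem~\ref{thm:sg-4pi} is the same argument on the full space $\R^{\Omega_{\epsilon,L}}$, where no constraint and no Poincar\'e step are needed, $\lambda=m^2$, and $\DD_{\epsilon,L}(F)=\E_{\nu_{\epsilon,L}}[(\nabla F)^2]$ already holds since $d=2$.
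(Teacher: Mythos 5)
Your proposal is correct and follows essentially the same route as the paper: it writes $\nu^0_{\epsilon,L}$ as a Gaussian perturbation on the mean-zero subspace (the paper uses the projected covariance $C^0_\infty=PA^{-1}P$ and $\dot C^0_t=e^{-tA}P$, which is your restriction to $X_0$), applies Theorem~\ref{thm:LSI-heat} with smallest eigenvalue $m^2$ plus the first nonzero eigenvalue of $-\Delta^\epsilon$, transfers the Hessian bounds of Section~\ref{sec:SG} to the conservative case (the paper's Proposition~\ref{prop:SG-HeVC-kawasaki}), and converts the resulting $\ell^2$ Dirichlet form into $\DD^0_{\epsilon,L}$ via the same discrete spectral-gap/Poincar\'e step. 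The only differences are cosmetic (continuum versus unit-lattice scaling), and your identification of $\zeta^2\to(2\pi/L)^2$ matches the paper's treatment of the constant.
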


For $z=0$, the sine-Gordon model degenerates simply to the continuum Gaussian free field
with covariance $(-\Delta+m^2)^{-1}$,
as $\epsilon \downarrow 0$, for which the Glauber Log-Sobolev constant is $m^2$
(by \cite{MR0420249} or the Bakry--\'Emery criterion), and similarly in the Kawasaki case.
Note that, in this scaling in which the convexity of the Gaussian measure is of order $1$,
the best lower bound on the Hessian of the interaction term $V_0$ is of order $-\epsilon^{-\beta/4\pi}$ if $z\neq 0$
and thus tends to $-\infty$ as $\epsilon \to 0$.
Thus the measure is far out of the scope of the applicability of the Bakry--\'Emery criterion if $z\neq 0$.
Our proof of the above theorems via Theorem~\ref{thm:LSI-heat} relies on the smoothing of the
effective potential $V_t$ along the flow of the Polchinski equation.

The Glauber dynamics of the sine-Gordon model is considered in \cite{MR3452276,1808.02594}.
Using the theory of regularity structures, it is shown in these references that versions of \eqref{e:sg-SDE}
that are regularised in space-time instead of space only
converge as $\epsilon\to 0$ pathwise in a space of distributions
on a short noise-dependent time interval.
In our setting, it is essential that the noise is white in time for the regularised dynamics to define a Markov process.
The question of regularisation in space rather than space-time
was considered for the closely related problems of the subcritical
continuum $\varphi^4$ model and KPZ equation in \cite{MR3592748,MR3785597,MR3758734}
as well as in \cite{MR3628883,MR4029153,1808.09429}.
Presumably similar arguments would apply also to the sine-Gordon model, but have not been carried out.

Finally, we provide some references on the continuum sine-Gordon model.
For $0<\beta<8\pi$, at least when the domain is a torus and $z \neq 0$ is small and $m^2>0$,
it is known that $\nu_{\epsilon,L} \to \nu$ weakly, where $\nu$ is a non-Gaussian measure on $\cD'(\R^2)$
with a precise description in terms of renormalised expansions;
see
\cite{MR0421409,MR0443693},
\cite{MR649810,MR849210}, \cite{MR914427}, and \cite{MR1777310,MR1638615,MR1240586}
for different approaches.
This result is simplest for $\beta<4\pi$,
when in finite volume the continuum sine-Gordon measure is absolutely continuous with respect to the Gaussian free field.
For $4\pi \leq \beta < 8\pi$, there is an infinite sequence of thresholds at
$\beta=8\pi(1-1/2n)$, $n=1,2,\dots$,
at which the partition function (but not the normalised probability measure) acquires divergent contributions;
see \cite{MR649810} for further discussion.
The physical meaning of these divergences remains debated \cite{10.1007/BF02179380}.
The sine-Gordon model satisfies a very interesting duality with the massive Thirring model,
the Coleman correspondence or Bosonization \cite{PhysRevD.11.2088}. For restricted values of $\beta$,
this correspondence has been established in finite volume or with a mass term \cite{MR0443693,MR1672504,MR2461991},
but in general its proof remains an open problem, most importantly in the formally massless case $m^2=0$.
In particular, under this correspondence, for the special value $\beta=4\pi$, the correlations functions of the
sine-Gordon model are equivalent to those of free fermions.
In general, an important question for the sine-Gordon model that has remained open
is the formally massless case $L \to \infty$ and $m^2\to 0$,
in which case correlations decay polynomially if $z=0$.
For $z\neq 0$, it is conjectured that the equilibrium correlation functions
have exponential decay, for any $\beta<8\pi$.
Closely related results for small $\beta$ were obtained in \cite{MR574172,MR923850}.
It would be very interesting to understand the dynamical behaviour in this regime.

Our result extends up to the second threshold $\beta< 6\pi$ and makes use of the approach of \cite{MR914427}.
It remains a very interesting problem to extend our results to the optimal regime $\beta<8\pi$.
Recent progress in the direction of extending the method of \cite{MR914427} includes \cite{MR4010781}.
Other recent results for the sine-Gordon model include \cite{1806.02118}.
For a one-dimensional analogue of the sine-Gordon model, a recent construction using martingales was given in \cite{1903.01394}.

\subsection{More discussion of our approach and of further directions}
\label{sec:intro-discuss}

Our approach to the Log-Sobolev inequality
involves the Polchinski equation \eqref{e:polchinski}.
The Polchinski equation 
is a continuous version of Wilson's renormalisation group
(which typically proceeds in discrete steps)
and variations of it go back to  \cite{PhysRevB.4.3174,PhysRevA.8.401},
while the continuous point of view was first systematically used
by Polchinski \cite{polchinski1984269}.
See \cite{MR2279216} for a review of its history
as well as for an account of the important role it has played in recent
advances in Perturbative Quantum Field Theory.
The relation of the Polchinski equation to the Mayer expansion
and its iterated versions was investigated in \cite{MR914427}
on which we rely for the sine-Gordon model.
Ideas related to the Polchinski equation were also used recently
in \cite{1805.10814} for a simple construction of the continuum $\varphi^4$ model in $d=2,3$.
We also mention that approaches involving aspects of renormalisation have
been used for a long time to study dynamics of spin systems, e.g., in the form of block dynamics
\cite{MR1746301,MR1931585,MR1414837}
and more recently in the two-scale approach \cite{MR3098070,MR2521405,MR2291434,1807.09857}.
Our approach involves infinitely many scales.

The regime of the continuum limit considered in Section~\ref{sec:intro-sg}
is known as the \emph{ultraviolet problem} in physics,
which for the two-dimensional sine-Gordon model is well-posed for $\beta<8\pi$.
The long-distance behaviour is predicted to be independent of $\epsilon$.
For $\beta<8\pi$, it can studied as a property of the continuum limit $\epsilon \to 0$,
but it makes
sense for all $\beta>0$ when the regularisation $\epsilon$ is fixed (lattice problem).
For $\beta \geq \beta_c$ (where the curve $\beta_c(z)$ passes through $8\pi$ at $z=0$, see \cite{MR2917175,1311.2237})
and small $z$ and $m^2=0$,
the scaling limit is known to be Gaussian free field in a suitable sense, for the model defined on the torus
\cite{1311.2237,MR1101688}.
This is called the \emph{infrared problem} in physics.
However, we emphasise that,
while the ultraviolet problem can be translated to a lattice problem, as we do,
the scaling of the infrared problem is more delicate than that of the ultraviolet problem.
For the sine-Gordon model, in the ultraviolet limit, the microscopic coupling constant is very small, of order $\epsilon^{2-\beta/4\pi} \ll 1$.
For the infrared problem, the microscopic coupling constant is of order $1$,
and unlikely field configurations play a more important role in understanding the measure
(large field problem); see \cite{MR1101688,MR2917175,1311.2237}.
We studied the spectral gap for the hierarchical version of the infrared problem in \cite{MR4061408}.
Using Theorem~\ref{thm:LSI-asy} and the estimates proved  in \cite{MR4061408}, the results for the
spectral gap stated in \cite{MR4061408} can be improved to results about the Log-Sobolev constant; see Example~\ref{ex:hierarchical}.

The next natural class of models that would be interesting to apply Theorem~\ref{thm:LSI-heat} to is
the $\varphi^4$ model.
The problem analogous of the one considered for the sine-Gordon model would be
the continuum $\varphi^4$ model on $\R^d$ where $d=2,3$ with sufficiently large mass (ultraviolet problem).
On a finite two-dimensional torus, a spectral gap result for the continuum $\varphi^4$ model has been shown in \cite{MR3825880}.
We stress again that the Polchinski equation has also been used in \cite{1805.10814}
in the construction of the continuum $\varphi^4$ model on a torus in $d=2,3$.
As in the case of the sine-Gordon model, the infrared problem appears more difficult than the ultraviolet problem.
For the latter we expect that
the Log-Sobolev constant
of the lattice $\varphi^4$ model or the Ising model in $d = 4$ (respectively $d>4$)
scales as $u(-\log u)^{z}$ (respectively $u$) as the critical point is approached with distance $u \downarrow 0$.
Again, for the hierarchical $\varphi^4$ model, we proved
the analogous statement for the spectral gap in \cite{MR4061408}
and the results of this paper can again be used to improve the latter result
to prove also an analogous Log-Sobolev inequality;
again see Example~\ref{ex:hierarchical}.

In a different direction, 
the Bakry--\'Emery theory has a well-known formulation in the context of manifolds (and beyond).
The Polchinski equation is closely related to the Gaussian convolution semigroup $\EE_{C_t}$ on $X$
and thus to the linear structure of $X$.
However with the disintegration of the Gaussian measure taking the role of
the reverse Ricci flow, there is an interesting resemblence
of our construction with those in \cite{0211159,MR2666905,MR2507614};
see also Remark~\ref{rk:metric}.

Finally, we remark that Log-Sobolev inequalities are a very useful tool to derive mixing results in general,
see, e.g., \cite{MR3020173}. It would be very interesting to derive such results in our context.

\section{Log-Sobolev inequality and the Polchinski equation}
\label{sec:LSI}

In this section we prove Theorem~\ref{thm:LSI-heat}
and variations of this result that apply in slightly different set-ups.
The proofs share many elements with the Bakry--\'Emery argument
which we will review.

\subsection{The renormalisation semigroup}
\label{sec:LSI-setup}

Let $t\in [0,\infty] \mapsto C_t$ be a function of positive semidefinite matrices on $\R^N$ increasing continuously as quadratic forms to a matrix $C_\infty$.
More precisely, we assume that $C_t = \int_0^t \dot C_s \, ds$ for all $t$,
where $t\mapsto \dot C_t$ is a bounded function with values in the space of positive semidefinite matrices
that is the derivative of $C_t$ except at isolated points.
As before, we denote by $\EE_{C_t}$ the expectation of the possibly degenerate Gaussian measure with covariance $C_t$.
We consider a probability measure $\nu_0$ with expectation 
\begin{equation} \label{e:nu0-Cinfty}
  \E_{\nu_0}F
  \propto
  \EE_{C_\infty} (e^{- V_0(\zeta)} F(\zeta))
  ,
\end{equation}
for a potential $V_0:\R^N \to \R$.
For $t>s>0$, we recall the definitions
\begin{align} \label{e:V-def-bis}
  e^{-V_t(\varphi)} &= \EE_{C_t}(e^{-V_0(\varphi+\zeta)}), 
  \\
  \label{e:P-def-bis}
  \PP_{s,t}F(\varphi) &= e^{V_t(\varphi)} \EE_{C_t-C_s}(e^{-V_s(\varphi+\zeta)} F(\varphi+\zeta)),
  \\
  \label{e:nu-def-bis}
  \E_{\nu_t} F = \PP_{t,\infty}F(0) &= e^{V_\infty(0)} \EE_{C_\infty-C_t} (e^{-V_t(\zeta)} F(\zeta)),
\end{align}
where the expectations again apply to $\zeta$.
We impose the following \emph{continuity assumption}:
For all bounded smooth functions $F: X \to \R$ and $g:\R \to \R$,
\begin{equation} \label{e:continuity}
  \E_{\nu_t}g(\PP_{0,t}F) \quad \text{is continuous in $t\in[0,+\infty].$}
\end{equation}
The assumption \eqref{e:continuity} reduces to \eqref{e:ergodicity}
when $C_t$ is differentiable in $t$, as in Section~\ref{sec:intro-lsi},
and it is again clear in all examples of practical interest.

The following proposition collects some properties of the above definitions;
we postpone its elementary proof to Section~\ref{sec:proof1}.

\begin{proposition} \label{prop:polchinski}
  Let $(C_t)$ be as above, let $V_0 \in C^2$, and assume \eqref{e:continuity}.
  Then for every $t$ such that $C_t$ is differentiable
  the renormalised potential $V$ defined in \eqref{e:V-def} satisfies the Polchinski equation
  \begin{align}
    \label{e:polchinski-bis}
    \partial_t V_t &= \frac12 \Delta_{\dot C_t} V_t - \frac12 (\nabla V_t)_{\dot C_t}^2.
  \end{align}
  The operators $(\PP_{s,t})_{s\leq t}$ form a time-dependent Markov semigroup with generators $(\LL_t)$,
  in the sense that
  $\PP_{t,t} = \id$ and $\PP_{r,t}\PP_{s,r}=\PP_{s,t}$ for all $s\leq r \leq t$,
  that $\PP_{s,t}F \geq 0$ if $F \geq 0$ and $\PP_{s,t}1=1$,
  and that for all $t$ at which $C_t$ is differentiable
  (respectively $s$ at which $C_s$ is differentiable),
  \begin{equation} \label{e:polchinski-generator}
    \ddp{}{t} \PP_{s,t}F = \LL_t  \PP_{s,t} F,
    \qquad
    -\ddp{}{s} \PP_{s,t}F = \PP_{s,t} \LL_s F,
    \qquad (s \leq t),
  \end{equation}
  for all smooth functions $F$, where $\LL_t$ acts on a smooth function $F$ by
  \begin{equation}  
    \label{e:polchinski-L}
    \LL_tF = \frac12 \Delta_{\dot C_t} F - (\nabla V_t, \nabla F)_{\dot C_t}.
  \end{equation}
  The measures $\nu_t$ evolve dual to $(\PP_{s,t})$ in the sense that
  \begin{equation} \label{e:polchinski-semigroup}
    \E_{\nu_t}\PP_{s,t} F = \E_{\nu_s} F \quad (s \leq t),
        \qquad
    -\ddp{}{t} \E_{\nu_t} F = \E_{\nu_t} \LL_t F.
  \end{equation}
  Finally, for any smooth function $F$ with values in a compact subset of $(0,\infty)$ and $\Phi(x) = x\log x$,
  \begin{equation} \label{e:nusQPs}
    \qquad \E_{\nu_t}\Phi(\PP_{0,t}F)
    \quad \text{is continuous in $t\in [0,+\infty].$}
  \end{equation}
\end{proposition}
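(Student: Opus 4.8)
The plan is to verify each assertion in turn, working directly from the definitions \eqref{e:V-def-bis}--\eqref{e:nu-def-bis} and using only Gaussian calculus together with the convolution (semigroup) property of the family $\EE_{C_t}$, namely that $\EE_{C_t-C_s}\EE_{C_s} = \EE_{C_t}$ in law whenever $s \le t$ (which holds because $C_t - C_s \ge 0$ and Gaussians convolve by adding covariances). First I would establish the Polchinski equation \eqref{e:polchinski-bis}. Setting $Z_t = e^{-V_t}$, the definition \eqref{e:V-def-bis} reads $Z_t(\varphi) = \EE_{C_t}(Z_0(\varphi+\zeta))$, which is the Gaussian convolution of $Z_0$ against the heat-type kernel with covariance $C_t$; at every $t$ where $C_t$ is differentiable with derivative $\dot C_t \ge 0$, differentiating under the expectation and integrating by parts in the Gaussian gives $\partial_t Z_t = \tfrac12 \Delta_{\dot C_t} Z_t$. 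Translating back via $V_t = -\log Z_t$ and using $\Delta_{\dot C_t}(-\log Z) = -\Delta_{\dot C_t}Z/Z + (\nabla Z)_{\dot C_t}^2/Z^2$ yields exactly \eqref{e:polchinski-bis}. (Here I would invoke Remark~\ref{rk:uniqueness} only implicitly; the convolution formula is what is needed.)

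Next I would treat the semigroup structure of $(\PP_{s,t})$. The normalisation $\PP_{t,t} = \id$ is immediate from $C_t - C_t = 0$; positivity preservation and $\PP_{s,t}1 = 1$ follow because $\PP_{s,t}F(\varphi)$ is, up to the positive prefactor $e^{V_t(\varphi)}$, a Gaussian expectation of $e^{-V_s(\varphi+\zeta)}F(\varphi+\zeta) \ge 0$, and setting $F \equiv 1$ recovers $e^{V_t(\varphi)}\EE_{C_t - C_s}(e^{-V_s(\varphi+\zeta)}) = e^{V_t(\varphi)} e^{-V_t(\varphi)} = 1$ by \eqref{e:V-def-bis} applied at scale $t$ with the convolution identity. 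The cocycle property $\PP_{r,t}\PP_{s,r} = \PP_{s,t}$ is a direct computation: expanding the left-hand side, the two factors $e^{V_r}$ and $e^{-V_r}$ cancel, the nested Gaussian expectations $\EE_{C_t - C_r}\EE_{C_r - C_s}$ combine to $\EE_{C_t - C_s}$, and one is left with the defining formula for $\PP_{s,t}$. For the generator identities \eqref{e:polchinski-generator}, I would differentiate $\PP_{s,t}F(\varphi) = e^{V_t(\varphi)}\EE_{C_t - C_s}(e^{-V_s(\varphi+\zeta)}F(\varphi+\zeta))$ in $t$: the prefactor contributes $(\partial_t V_t)\PP_{s,t}F$, and the Gaussian expectation contributes $\tfrac12 \Delta_{\dot C_t}$ acting on the convolution (again by Gaussian integration by parts, since $\partial_t(C_t - C_s) = \dot C_t$); combining these with the Polchinski equation \eqref{e:polchinski-bis} for $\partial_t V_t$ and expanding $\Delta_{\dot C_t}(e^{V_t}G)$ produces precisely $\LL_t \PP_{s,t}F$ with $\LL_t$ as in \eqref{e:polchinski-L}. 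Differentiating in $s$ is analogous (now $\partial_s(C_t - C_s) = -\dot C_s$ and the $e^{-V_s(\varphi+\zeta)}$ factor is differentiated inside the expectation), and after reorganising one obtains $-\partial_s\PP_{s,t}F = \PP_{s,t}\LL_s F$. The duality relations \eqref{e:polchinski-semigroup} then follow by recognising $\E_{\nu_t}G = \PP_{t,\infty}G(0)$ from \eqref{e:nu-def-bis}, so that $\E_{\nu_t}\PP_{s,t}F = \PP_{t,\infty}\PP_{s,t}F(0) = \PP_{s,\infty}F(0) = \E_{\nu_s}F$ by the cocycle property, and differentiating this identity (or the explicit formula) in $t$ gives $-\partial_t \E_{\nu_t}F = \E_{\nu_t}\LL_t F$.

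Finally, for the continuity statement \eqref{e:nusQPs}: since $F$ takes values in a compact subset of $(0,\infty)$, both $\log F$ and $\Phi(F) = F\log F$ are bounded and smooth, so $\PP_{0,t}F$ is bounded away from $0$ and $\infty$ (by $\PP_{0,t}1 = 1$ and positivity) and $\Phi(\PP_{0,t}F)$ is a bounded smooth function of the field. Then $\E_{\nu_t}\Phi(\PP_{0,t}F) = \E_{\nu_t} g(\PP_{0,t}F)$ with $g = \Phi$ restricted to the relevant compact range (extended smoothly and boundedly), which is exactly of the form covered by the continuity assumption \eqref{e:continuity}. I would just need to note that $g$ may be taken bounded and smooth without loss of generality because $\PP_{0,t}F$ ranges in a fixed compact set independent of $t$. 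The one point requiring a little care — and the main (mild) obstacle — is the handling of the non-differentiability of $C_t$ at isolated points: the PDE and generator identities hold only where $\dot C_t$ exists, but since $t \mapsto C_t$ is continuous and the formulas \eqref{e:V-def-bis}--\eqref{e:nu-def-bis} depend continuously on $C_t$, the integrated statements (the semigroup property, duality, and the continuity claims) hold for all $t$; one simply avoids asserting pointwise differentiability at the exceptional times. Everything else is routine Gaussian computation, and I expect no substantial difficulty.
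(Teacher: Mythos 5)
Your proposal is correct and follows essentially the same route as the paper's proof: the heat equation for $Z_t = e^{-V_t}$ via Gaussian convolution, the semigroup and generator identities by direct differentiation combined with the Polchinski equation, duality from the definitions (your use of the cocycle identity $\PP_{t,\infty}\PP_{s,t}=\PP_{s,\infty}$ is a minor cosmetic variant of the paper's ``by definition''), and the continuity claim \eqref{e:nusQPs} by extending $\Phi$ from the compact range of $\PP_{0,t}F$ to a bounded smooth $g$ and invoking \eqref{e:continuity}. No gaps.
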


\begin{remark}
  The Polchinski semigroup operates from the right, i.e., $\PP_{s,t} = \PP_{r,t}\PP_{s,r}$ for $s\leq r \leq t$.
  Thus it acts on probability densities relative to $\nu_t$:
  if $\mu_0 = F \, d\nu_0$ is a probability measure then $\mu_t = \PP_{0,t}F \, d\nu_t$ is again a probability measure.
  For a time-independent semigroup $\TT_{s,t} = \TT_{t-s}$ that is reversible with respect to the measure $\nu_0$
  (as, for example, the original semigroup associated to the Dirichlet form),
  one has the dual point of view
  that $\TT$ describes the evolution of an observable:
  \begin{equation}
    \E_{\mu_t} G
    =
    \int G (\TT_tF) \, d\nu_0 = \int (\TT_tG) F\, d\nu_0
    = \E_{\mu_0} (\TT_tG).
  \end{equation}
  Such a dual semigroup can be realised in terms of a Markov process $(\varphi_t)$ as
  $\TT_{t}F(\varphi) = \EE_{\varphi_0=\varphi}F(\varphi_t)$.
  Since the Polchinski semigroup is not reversible and time-dependent, this interpretation
  does not apply to the Polchinski semigroup. 
  Instead, the Polchinski semigroup $\PP_{s,t}$ can be realised in terms of an SDE that starts at time $t$
  and runs time in the negative direction from $t$ to $s$.
  Indeed, set $\varphi_{r} = \tilde\varphi_{t-r}$ where $\tilde\varphi$ satisfies
  \begin{equation}
    \label{e:stochastic}
    d \tilde\varphi_r = - \dot C_{t-r}  \, \nabla V_{t-r} (\tilde\varphi_r) dr+ \sqrt{\dot C_{t-r}} d B_r ,\qquad 0 \leq r \leq t.
  \end{equation}
  Since $G(r,\varphi) = \PP_{s,t-r}F(\varphi)$ satisfies $\partial_r G + \LL_{t-r}G =0$ for $s<r<t$ by \eqref{e:polchinski-generator},
  It\^o's formula and \eqref{e:stochastic} imply that $G(r,\tilde\varphi_r) = \PP_{s,t-r}F(\varphi_{t-r})$ is a martingale for $r\in [s,t]$.
  This implies
  \begin{equation} \label{e:stochastic2}
    \PP_{s,t}F(\varphi)= \E_{\varphi_t=\varphi} F(\varphi_s).
  \end{equation}
  Thus if $\varphi_t$ is distributed according to $\nu_t$ by the above backward in time evolution $\varphi_s$ is distributed
  according to $\nu_s$ for $s<t$.
  Our interpretation of this is that,
  while the renormalised measures $\nu_t$ are supported on increasing smooth (in the index space) configurations as $t$ grows,
  the backward evolution restores the small scale fluctuations of $\nu_0$.
\end{remark}

For later use we also record the following useful relations for the derivatives of $V_t$;
we will not use these in Section~\ref{sec:LSI}.
The formulas follow immediately by differentiating \eqref{e:V-def-bis} using \eqref{e:P-def-bis}.

\begin{proposition}
  For all $f \in X$ and $t\geq s \geq 0$,
  \begin{align}
    \label{e:PtdV}
    (f,\nabla V_t) &= \PP_{s,t}(f,\nabla V_{s}),\\
    \label{e:PtHessV}
    (f,\He V_tf) &= \PP_{s,t} (f, \He V_{s}f)
                   - \qB{ \PP_{s,t}((f, \nabla V_{s})^2) -(\PP_{s,t}(f, \nabla V_{s}))^2}.
  \end{align}
\end{proposition}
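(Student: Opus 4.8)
The plan is to obtain both identities by differentiating the defining convolution \eqref{e:V-def-bis} in the field variable — once for \eqref{e:PtdV} and twice for \eqref{e:PtHessV} — and matching the result against the definition \eqref{e:P-def-bis} of $\PP_{s,t}$. First I would record the elementary semigroup identity for the renormalised potential itself: since Gaussian convolutions compose and $C_t = C_s + (C_t-C_s)$, one has $e^{-V_t(\varphi)} = \EE_{C_t-C_s}(e^{-V_s(\varphi+\zeta)})$ for $0\le s\le t$, which is also the statement $\PP_{s,t}1=1$ recorded in Proposition~\ref{prop:polchinski}. Writing $D_f$ for the directional derivative $D_fG(\varphi) = (f,\nabla G(\varphi))$ and applying $D_f$ to this identity under the Gaussian expectation gives $-(f,\nabla V_t)\,e^{-V_t} = \EE_{C_t-C_s}\bigl(-(f,\nabla V_s)(\varphi+\zeta)\,e^{-V_s(\varphi+\zeta)}\bigr)$; dividing by $e^{-V_t(\varphi)}$ and comparing with \eqref{e:P-def-bis} applied to $F=(f,\nabla V_s)$ yields precisely \eqref{e:PtdV}.

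For \eqref{e:PtHessV} I would apply $D_f$ once more, using the product-rule identity $D_f\bigl[(f,\nabla V)\,e^{-V}\bigr] = \bigl[(f,\He Vf)-(f,\nabla V)^2\bigr]e^{-V}$ on both sides (with $V=V_t$ on the left and $V=V_s$ under the expectation on the right). This gives $(f,\He V_tf)-(f,\nabla V_t)^2 = \PP_{s,t}\bigl((f,\He V_sf)\bigr)-\PP_{s,t}\bigl((f,\nabla V_s)^2\bigr)$ by linearity of $\PP_{s,t}$, and substituting $(f,\nabla V_t)^2 = \bigl(\PP_{s,t}(f,\nabla V_s)\bigr)^2$ from \eqref{e:PtdV} and rearranging produces \eqref{e:PtHessV}.

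The computation is essentially mechanical, so the only genuine point to check — and the only place a little care is needed — is the interchange of $D_f$ with the Gaussian expectation. This is justified for $V_0\in C^2$ by dominated convergence, using the rapid decay of the Gaussian density against the at-most-polynomial growth of the integrand together with its first two field derivatives. To keep the regularity demands minimal it is cleanest to establish \eqref{e:PtdV}--\eqref{e:PtHessV} first in the case $s=0$ directly from \eqref{e:V-def-bis}, where only $V_0\in C^2$ enters, and then obtain general $s\le t$ via the semigroup identity above (noting that $V_s$ inherits $C^2$ regularity from $V_0$, and is in fact smooth in the directions in the range of $C_s$). I do not anticipate a serious obstacle here: the formulas are exact consequences of the convolution structure.
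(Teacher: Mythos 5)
Your proposal is correct and is exactly the argument the paper intends: the paper dispatches this proposition with the single remark that "the formulas follow immediately by differentiating \eqref{e:V-def-bis} using \eqref{e:P-def-bis}," and your computation (differentiating the convolution identity $e^{-V_t}=\EE_{C_t-C_s}(e^{-V_s(\cdot+\zeta)})$ once and twice, then dividing by $e^{-V_t}$ and substituting \eqref{e:PtdV} into the second-derivative identity) is precisely that differentiation carried out in detail. Your remarks on justifying the interchange of derivative and Gaussian expectation are a reasonable supplement consistent with the paper's standing smoothness assumptions.
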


\subsection{Relative entropy, Markov semigroups, and the Bakry--\'Emery argument}

In a time-dependent generalisation,
we now review the decomposition of the relative entropy in terms
of a semigroup that underlies the Bakry--\'Emery argument.
By approximation (see, e.g., \cite[Theorem~3.1.13]{MR2352327}),
to prove a Log-Sobolev inequality,
it suffices to consider smooth functions $F: X\to \R$ with values in a compact subset of $(0,\infty)$,
which we will do from now on.

We consider a curve of probability measures $(\nu_t)_{t\geq 0}$
and a corresponding dual time-dependent Markov semigroup $(\PP_{s,t})$
with generators $(\LL_t)$ as in Proposition~\ref{prop:polchinski}.
Namely, we assume that \eqref{e:polchinski-generator} and \eqref{e:polchinski-semigroup} hold,
that $\LL_t$ is of the form \eqref{e:polchinski-L} for some positive semidefinite matrices $\dot C_t$
and functions $V_t$ (not necessarily satisfying \eqref{e:polchinski-bis}),
and also that \eqref{e:nusQPs} holds.
Denoting $F_t = \PP_{0,t}F$ and $\dot F_t=\ddp{}{t} F_t$,
using first \eqref{e:polchinski-semigroup} and then \eqref{e:polchinski-L},
it is then elementary to see that
\begin{align} \label{e:dEnt}
  - \ddp{}{t} \E_{\nu_t} \Phi (F_{t})
  &=
    \E_{\nu_t} 
    \pbb{
    \LL_t (\Phi (F_{t}))
    -
    \Phi '(F_{t})\dot{F}_{t}
    }
  \nnb
  &
    =
    \E_{\nu_t}
    \pbb{
    \Phi' (F_{t})\LL_t F_t
    +
    \Phi''(F_{t}) \frac{1}{2} (\nabla F_{t})_{\dot C_t}^{2}
    -
    \Phi'( F_{t}) \dot{F}_{t}
    }
  \nnb
  &
    =
    \frac12
    \E_{\nu_t}
    \pbb{
    \Phi''(F_t) (\nabla F_{t})_{\dot C_t}^{2}
    }
    .
\end{align}
Integrating this relation using \eqref{e:nusQPs},
with $\Phi''(x)=1/x$,
it follows that
\begin{equation}
  \label{e:Ent-P}
  \ent_{\nu_0}(F)
  = \frac12 \int_0^\infty \E_{\nu_{t}} \frac{(\nabla \PP_{0,t} F)_{\dot C_t}^2}{\PP_{0,t}F} \, dt
  = 2\int_0^\infty \E_{\nu_{t}} (\nabla \sqrt{\PP_{0,t} F})_{\dot C_t}^2 \, dt
  .
\end{equation}
To be precise,
recall that $C_t$ is differentiable except for at most countably many $t$.
For all $t$ such that $C_t$ is differentiable, the identity \eqref{e:dEnt} holds
and implies that the continuous function $t\mapsto \E_{\nu_t} \Phi (F_{t})$
is differentiable at $t$ with nonpositive derivative.
In particular, this implies that $\E_{\nu_t} \Phi (F_{t})$ is decreasing,
which justifies the use of the fundamental theorem of calculus and
together with \eqref{e:continuity} with $t=+\infty$ for the limit gives \eqref{e:Ent-P}.

To obtain a Log-Sobolev inequality, 
the right-hand side of \eqref{e:Ent-P} must be bounded
by the Dirichlet form with respect to the measure $\nu_0$.
The same argument with $\Phi(x)=x^2$ would give a bound on the variance rather than
the  entropy and correspondingly a spectral gap inequality;
the required bound is easier to obtain in this case.

For measures that are log-concave (or, more generally, ones that satisfy a curvature dimension condition; see \cite{MR3155209}),
sharp estimates have been obtained by celebrated arguments of
Lichnerowicz (for the spectral gap) and of Bakry--\'Emery.
We review the latter briefly now.

\begin{example}[Bakry--\'Emery \cite{MR889476,MR2213477}]
Assume the measure $\nu=\nu_0$ has expectation given by \eqref{e:measureH}.
Let $\nu_t= \nu_0$ for all $t \geq 0$, and define the semigroup $\TT_{s,t} = \TT_{t-s}$ with generator
\begin{equation} \label{e:BE-generator}
  \LL F
  = \Delta F - (\nabla H,\nabla F).
\end{equation}
This semigroup leaves $\nu_0$ invariant.
Bakry--\'Emery showed, for all $F\geq 0$,
\begin{align} \label{e:LSI-identity-BE}
  \ddp{}{t} \E_{\nu_0}(\nabla \sqrt{\TT_tF})^2 
  &= - \frac14 \E_{\nu_0}(\TT_tF (|\He \log \TT_tF|_2^2 + (\nabla \log \TT_tF, (\He H)\nabla \log \TT_tF)))
  \nnb
  &\leq -\frac14 \E_{\nu_0}(\TT_tF(\nabla \log \TT_tF, (\He H)\nabla \log \TT_tF)))
   .
\end{align}
If $\He H(\varphi) \geq \lambda \id > 0$ as quadratic forms, uniformly in $\varphi \in \R^N$, it follows that
\begin{align}
  \ddp{}{t} \E_{\nu_0}(\nabla \sqrt{\TT_tF})^2
  \leq
  - \lambda \E_{\nu_0}(\nabla \sqrt{\TT_tF})^2,
  \qquad
  \E_{\nu_0}(\nabla \sqrt{\TT_tF})^2 \leq e^{-\lambda t} \E_{\nu_0}(\nabla \sqrt{F})^2.
\end{align}
Substituting this into \eqref{e:Ent-P} yields the Log-Sobolev inequality
\begin{equation} \label{e:BE-LSI}
  \ent_{\nu_0}(F)
  = 2\int_0^\infty \E_{\nu_0}(\nabla \sqrt{\TT_tF})^2 \, dt
  \leq \frac{2}{\lambda} \E_{\nu_0}(\nabla \sqrt{F})^2.
\end{equation}
In fact, \eqref{e:LSI-identity-BE} follows as in Lemma~\ref{lem:LSI-pf-identity} below.
\end{example}


\subsection{Variations of Theorem~\ref{thm:LSI-heat}}

The following theorem generalises Theorem~\ref{thm:LSI-heat} by not assuming that $\dot C_t$
is given by the heat kernel.

\begin{theorem} \label{thm:LSI-mon}
  Let $\dot C_t$ and $V_t$ be as in Section~\ref{sec:LSI-setup},
  assume that $\dot C_t$ is differentiable for all $t$, and that \eqref{e:continuity} holds.
  Suppose there are $\dot\lambda_t$ (allowed to be negative) such that
  \begin{equation} \label{e:assCt-mon}
    \dot C_t \He V_t(\varphi) \dot C_t - \frac{1}{2} \ddot C_t \geq \dot\lambda_t \dot C_t
    \quad \text{for all $t \geq 0$ and all $\varphi \in X$},
  \end{equation}
  and define
  \begin{equation} \label{e:gammadef-mon}
    \lambda_t = \int_0^t \dot \lambda_s \, ds,
    \qquad \frac{1}{\gamma} = |\dot C_0| \int_0^\infty e^{-2\lambda_s} \, ds
  \end{equation}
  where $|\dot C_0|$ is the largest eigenvalue of $\dot C_0$.
  Then $\nu_0$ satisfies the Log-Sobolev inequality
  \begin{equation} \label{e:LSI-mon}
    \ent_{\nu_0}(F)
    \leq \frac{2}{\gamma} \E_{\nu_0}(\nabla \sqrt{F})^2.
  \end{equation}
\end{theorem}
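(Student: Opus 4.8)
The plan is to run the same semigroup-decomposition argument as in Theorem~\ref{thm:LSI-heat}, but now monitoring the quantity $\E_{\nu_t}(\nabla\sqrt{\PP_{0,t}F})_{\dot C_t}^2$ along the Polchinski flow, and to show it is \emph{decreasing} up to the exponential factor $e^{2\lambda_t}$ under the hypothesis \eqref{e:assCt-mon}. Concretely, write $F_t = \PP_{0,t}F$ and $G_t = \sqrt{F_t}$, and define the functional
\begin{equation*}
  \Psi(t) = \E_{\nu_t}(\nabla G_t)_{\dot C_t}^2 .
\end{equation*}
The first step is to differentiate $\Psi$ in $t$. This produces three kinds of terms: (i) the term from $\ddp{}{t}\nu_t$, handled by the dual relation \eqref{e:polchinski-semigroup}, i.e.\ $-\ddp{}{t}\E_{\nu_t}(\cdot) = \E_{\nu_t}\LL_t(\cdot)$; (ii) the term from $\ddp{}{t} G_t$, where $\ddp{}{t}F_t = \LL_t F_t$ by \eqref{e:polchinski-generator}; and (iii) the explicit term $\E_{\nu_t}(\nabla G_t, \ddot C_t \nabla G_t)$ coming from the $t$-dependence of the metric $\dot C_t$ itself — this is the new feature absent from the constant-curvature Bakry--Émery setting, and it is exactly why $\ddot C_t$ appears in \eqref{e:assCt-mon}. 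Combining (i) and (ii) and carrying out the integration by parts against $\nu_t$ (using $\LL_t$ is the $\dot C_t$-Laplacian with drift $-\nabla V_t$) should yield, by the usual Bochner/$\Gamma_2$-type computation,
\begin{equation*}
  \ddp{}{t}\Psi(t) = -\E_{\nu_t}\!\Big( F_t\,\big|\He_{\dot C_t}\!\log F_t\big|^2 \Big)
   - \E_{\nu_t}\!\Big( F_t\,\big(\nabla\log F_t,\,(\dot C_t\He V_t\dot C_t - \tfrac12\ddot C_t)\nabla\log F_t\big)\Big),
\end{equation*}
up to the correct normalising constants (the precise constants are a routine book-keeping matter; I expect a factor like $-\tfrac14$ on the first term and matching factors on the second). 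This is the analogue of \eqref{e:LSI-identity-BE}, and it should be isolated as a lemma — presumably Lemma~\ref{lem:LSI-pf-identity} referenced in the Bakry--Émery example.

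The second step drops the nonnegative Hessian-squared term and applies the hypothesis: by \eqref{e:assCt-mon} the matrix $\dot C_t\He V_t\dot C_t - \tfrac12\ddot C_t \geq \dot\lambda_t\dot C_t$, so
\begin{equation*}
  \ddp{}{t}\Psi(t) \leq -\dot\lambda_t\,\E_{\nu_t}\big(F_t\,(\nabla\log F_t, \dot C_t\,\nabla\log F_t)\big)
  = -4\dot\lambda_t\,\Psi(t),
\end{equation*}
using $F_t(\nabla\log F_t, \dot C_t\nabla\log F_t) = 4(\nabla\sqrt{F_t})_{\dot C_t}^2$. Grönwall then gives $\Psi(t) \leq e^{-4\lambda_t}\Psi(0)$. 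Wait — I must track the constant against \eqref{e:gammadef-mon}, which has $e^{-2\lambda_s}$; the discrepancy is absorbed because $\Psi(t) = \E_{\nu_t}(\nabla\sqrt{F_t})_{\dot C_t}^2$ enters \eqref{e:Ent-P} with its own factor, and $\Psi(0) = \E_{\nu_0}(\nabla\sqrt F)_{\dot C_0}^2 \leq |\dot C_0|\,\E_{\nu_0}(\nabla\sqrt F)^2$. Substituting the Grönwall bound into the entropy decomposition \eqref{e:Ent-P} yields
\begin{equation*}
  \ent_{\nu_0}(F) = 2\int_0^\infty \Psi(t)\,dt \leq 2\Big(\int_0^\infty e^{-2\lambda_t}\,dt\Big)|\dot C_0|\,\E_{\nu_0}(\nabla\sqrt F)^2 = \frac{2}{\gamma}\E_{\nu_0}(\nabla\sqrt F)^2,
\end{equation*}
which is \eqref{e:LSI-mon}. (The factor-of-$2$ versus factor-of-$4$ in the exponent will need to be reconciled by checking whether $\Gamma_2$ here is computed with $\log F_t$ or $\sqrt{F_t}$; I would settle this when writing the lemma, but it does not affect the structure.)

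\textbf{Main obstacle.} The hard part is the differentiation-of-$\Psi$ lemma, specifically getting the $\ddot C_t$ term to enter with exactly the right sign and coefficient so that it combines cleanly with $\dot C_t\He V_t\dot C_t$ into the single matrix appearing in \eqref{e:assCt-mon}. Unlike the classical Bakry--Émery case, one is differentiating a quadratic form whose defining bilinear form $\dot C_t$ is itself moving, so the $\Gamma_2$ calculus must be redone with a time-dependent carré-du-champ; care is needed that the cross terms between the metric derivative and the drift $-\nabla V_t$ (which also depends on $t$) reassemble correctly. A secondary technical point, as in the proof of \eqref{e:Ent-P}, is justifying the use of the fundamental theorem of calculus: $\dot C_t$ is assumed differentiable for all $t$ here, so this is cleaner than in Theorem~\ref{thm:LSI-heat}, but one still must check that $\Psi$ is absolutely continuous and that the boundary term at $t=\infty$ vanishes, which follows from \eqref{e:continuity}/\eqref{e:nusQPs} together with finiteness of the defining integral in \eqref{e:gammadef-mon}. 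Finally one should note that Theorem~\ref{thm:LSI-heat} is recovered by taking $\dot C_t = e^{-tA}$, so that $\ddot C_t = -A e^{-tA} = -A\dot C_t$ and $-\tfrac12\ddot C_t = \tfrac12 A\dot C_t$; conjugating \eqref{e:assCt-mon} by $\dot C_t^{-1/2}$ and using $Q_t\He V_t Q_t \geq \dot\mu_t$ with $\dot\lambda_t = \dot\mu_t + \tfrac\lambda2\,(\cdots)$ reproduces the rate $e^{-\lambda t - 2\mu_t}$ — this consistency check is worth including.
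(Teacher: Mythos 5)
Your plan matches the paper's proof in all essentials: the paper's Lemma~\ref{lem:LSI-pf-identity} is exactly the time-dependent Bochner-type identity you anticipate, and the $\ddot C_t$ term arises precisely as the contribution of $\partial_s \dot C_s$ when the lemma (stated for a fixed matrix $Q$) is applied with $Q=\dot C_s$. The one cosmetic difference is that the paper works pointwise: from $(\LL_s-\partial_s)(\nabla\sqrt{\PP_{0,s}F})_{\dot C_s}^2 \geq 2\dot\lambda_s(\nabla\sqrt{\PP_{0,s}F})_{\dot C_s}^2$ it sets $\psi(s)=e^{-2\lambda_t+2\lambda_s}\PP_{s,t}\bigl[(\nabla\sqrt{\PP_{0,s}F})_{\dot C_s}^2\bigr]$ and shows $\psi'\leq 0$, obtaining the pointwise bound $(\nabla\sqrt{\PP_{0,t}F})_{\dot C_t}^2\leq |\dot C_0|\,e^{-2\lambda_t}\PP_{0,t}\bigl[(\nabla\sqrt F)^2\bigr]$ before taking $\E_{\nu_t}$; your version, differentiating $\Psi(t)=\E_{\nu_t}(\nabla\sqrt{\PP_{0,t}F})_{\dot C_t}^2$ directly and applying Gr\"onwall, is an equivalent reformulation via \eqref{e:polchinski-semigroup}. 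On the constant you left unsettled: the correct coefficient on the potential term, when written in terms of $\nabla\log F_t$, is $\tfrac12$ (equivalently, $2(\nabla\sqrt{F_t},(\dot C_t\He V_t\dot C_t-\tfrac12\ddot C_t)\nabla\sqrt{F_t})$), which gives $\ddp{}{t}\Psi\leq -2\dot\lambda_t\Psi$ and the rate $e^{-2\lambda_t}$, not $e^{-4\lambda_t}$; the ``absorption'' you invoked is not what happens, but since you explicitly flagged the factor as unresolved bookkeeping this is not a structural gap.
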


The proof of the theorem is given in Section~\ref{sec:proof2}.
When $\dot C_t$ is given by the heat kernel,
as in the context of Theorem~\ref{thm:LSI-heat},
the term $\ddot C_t$ in \eqref{e:assCt-mon} can be eliminated
explicitly and we can thus deduce Theorem~\ref{thm:LSI-heat} as follows.

\begin{proof}[Proof of Theorem~\ref{thm:LSI-heat}]
  Let $Q_t = e^{-tA/2}$ and $\dot C_t = e^{-tA} = Q_t^2$. Then $\ddot C_t = -A\dot C_t = -Q_tA Q_t$ and
  the left-hand side of \eqref{e:assCt-mon} is equal to
  \begin{equation}
    Q_t \qa{Q_t \He V_t(\varphi)Q_t + \frac12 A} Q_t.
  \end{equation}
  Since by assumption $A \geq \lambda$ and $Q_t \He V_t Q_t \geq  \dot \mu_t$ we can choose 
  $\dot \lambda_t = \frac{1}{2}\lambda+ \dot \mu_t$ to get
  \begin{equation} 
    \frac{1}{2} A + Q_t \He V_t(\varphi)Q_t \geq \dot\lambda_t \id,
  \end{equation}
  which with $Q_t^2 = \dot C_t$ implies \eqref{e:assCt-mon}.
  The claim \eqref{e:gammadef-heat} is thus implied by Theorem~\ref{thm:LSI-mon}.
\end{proof}

The next theorem provides a variation of Theorem~\ref{thm:LSI-mon}
that does not rely on differentiability or even continuity of $\dot C_t$ in $t$,
and can therefore be applied with more general covariance decompositions.
The price is the less symmetric condition \eqref{e:assCt-asy}.
However, this condition can for example be applied to discrete decompositions
$C_\infty = C_0 + C_1 + \cdots$ by setting $\dot C_s = \sum_j 1_{(j,j+1]}(s) C_j$.
In particular, this applies to the hierarchical spin
models that we studied in \cite{MR4061408}; see Example~\ref{ex:hierarchical}.

\begin{theorem} \label{thm:LSI-asy}
  Let $\dot C_t$ and $V_t$ be as in Section~\ref{sec:LSI-setup},
  and let $X_t \subseteq X$ be the image of the matrix $C_\infty-C_t$.
  Assume that \eqref{e:continuity} holds and that there are $\dot\lambda_t$ (allowed to be negative) such that
  \begin{equation} \label{e:assCt-asy}
    \frac12 \qa{ \dot C_t \He V_t(\varphi) + \He V_t(\varphi) \dot C_t} \geq \dot\lambda_t \id
    \quad \text{for all $t \geq 0$ and all $\varphi\in X_t$},
  \end{equation}
  and define
  \begin{equation} \label{e:gammadef-asy}
    \lambda_t = \int_0^t \dot \lambda_s \, dt,
    \qquad \frac{1}{\gamma} = \int_0^\infty e^{-2\lambda_s} |\dot C_s| \, ds
  \end{equation}
  where $|\dot C_t|$ is the largest eigenvalue of $\dot C_t$.
  Then $\nu_0$ satisfies the Log-Sobolev inequality \eqref{e:LSI-mon}.
\end{theorem}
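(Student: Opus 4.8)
The plan is to mimic the Bakry--\'Emery argument reviewed above, but now driving the entropy decomposition by the Polchinski semigroup $\PP_{s,t}$ rather than a reversible semigroup, and carefully tracking the time-dependent factor $\dot C_t$ appearing in the Dirichlet form of \eqref{e:Ent-P}. Write $F_t = \PP_{0,t}F$; by \eqref{e:Ent-P} we already know $\ent_{\nu_0}(F) = 2\int_0^\infty \E_{\nu_t}(\nabla\sqrt{F_t})_{\dot C_t}^2 \, dt$. The goal is therefore to bound, for each $t$, the quantity $\E_{\nu_t}(\nabla\sqrt{F_t})_{\dot C_t}^2$ by something like $e^{-2\lambda_t}|\dot C_t|\,\E_{\nu_0}(\nabla\sqrt F)^2$, so that integrating in $t$ and using \eqref{e:gammadef-asy} yields \eqref{e:LSI-mon}. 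To do this I would first reduce the weighted gradient to an unweighted one: since $\dot C_t \le |\dot C_t|\,\id$, we have $(\nabla\sqrt{F_t})_{\dot C_t}^2 \le |\dot C_t|\,(\nabla\sqrt{F_t})^2$, but more usefully, when restricting attention to $\varphi$ in $X_t$ (the image of $C_\infty - C_t$, which is where the measure $\nu_t$ and the remaining convolution live), one controls $(\nabla\sqrt{F_t})_{\dot C_t}^2$ in terms of $(\nabla\sqrt{F_t}, \dot C_t\,\nabla\sqrt{F_t})$ and this is where the one-sided hypothesis \eqref{e:assCt-asy} will enter.

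The heart of the matter is a differential inequality for the auxiliary quantity
\begin{equation}
  \Psi(s,t) = \E_{\nu_t}\bigl( (\nabla \PP_{s,t} G_s)_{\dot C_t}^2 \, / \, \PP_{s,t}G_s \bigr)
\end{equation}
or, more transparently, for $\E_{\nu_t}\bigl(\PP_{s,t}G\,(\nabla\log\PP_{s,t}G, \dot C_t\,\nabla\log\PP_{s,t}G)\bigr)$ traced back along the semigroup. Following the structure of \eqref{e:LSI-identity-BE}, I would differentiate in the running time and use the generator $\LL_t = \tfrac12\Delta_{\dot C_t} - (\nabla V_t,\nabla\cdot)_{\dot C_t}$ together with the commutation-type identity obtained from \eqref{e:polchinski-generator}. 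Differentiating $(\nabla F_t)_{\dot C_t}^2/F_t$ produces three kinds of terms: a good (negative) ``carr\'e du champ it\'er\'e'' term of the form $-F_t|\He\log F_t|^2_{\dot C_t}$-type which we simply discard; a term $-\,(\nabla\log F_t, \He V_t\,\dot C_t\,\nabla\log F_t)$-type coming from the drift in $\LL_t$ which we control from below by \eqref{e:assCt-asy}; and a term $+\tfrac12(\nabla\log F_t, \ddot C_t\,\nabla\log F_t)$ coming from the explicit $t$-dependence of $\dot C_t$ — in the $\dot C_t$-differentiable setting of Theorem~\ref{thm:LSI-mon} this term is absorbed by writing the hypothesis in the symmetric form \eqref{e:assCt-mon}, whereas in the present Theorem~\ref{thm:LSI-asy} we avoid it entirely by not differentiating $\dot C_t$ (instead estimating $\dot C_t$ crudely by $|\dot C_t|$ and putting that factor into the integral in \eqref{e:gammadef-asy}), at the cost of the less symmetric one-sided condition. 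Collecting, one gets roughly
\begin{equation}
  -\frac{d}{dr}\,\E_{\nu_r}\Bigl(\frac{(\nabla \PP_{s,r}G)_{\dot C_r}^2}{\PP_{s,r}G}\Bigr)\Big|_{\text{drift part}} \ \ge\ 2\dot\lambda_r\,\E_{\nu_r}\Bigl(\frac{(\nabla \PP_{s,r}G)^2}{\PP_{s,r}G}\Bigr),
\end{equation}
which after Gronwall gives the factor $e^{-2\lambda_t}$; then bounding the remaining $\dot C_r$-weighted form against the unweighted one at $r=t$ by $|\dot C_t|$ closes the estimate. I would package the core differential identity as a lemma (the ``$\Gamma_2$-type'' computation, analogous to Lemma~\ref{lem:LSI-pf-identity} referenced in the excerpt) so that Theorems~\ref{thm:LSI-mon} and \ref{thm:LSI-asy} follow by the same mechanism with the two different ways of handling $\ddot C_t$.

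The main obstacle is the bookkeeping around $X_t$ and the restriction of the Hessian bound \eqref{e:assCt-asy} to $\varphi \in X_t$: one must check that the function $\PP_{s,r}G$ and all gradients appearing effectively live on (or are tested against vectors in) $X_r$, so that the one-sided lower bound is actually applicable where it is needed — this uses that $\PP_{s,r}$ is built from convolution with $C_r - C_s$ and that $\nu_r$ is supported compatibly with $C_\infty - C_r$, so gradients pair with $\dot C_r$ which has range inside $X_r$. A secondary technical point, exactly as in the proof of \eqref{e:Ent-P}, is the lack of everywhere-differentiability of $\dot C_t$: one runs the argument at times of differentiability, uses monotonicity/continuity (here \eqref{e:continuity} and \eqref{e:nusQPs}) to justify integrating the differential inequality, and handles the at-most-countable bad set as a null set. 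Neither of these is deep, but the cleanest exposition is to isolate the pointwise-in-$r$ differential inequality first and treat the integration/measurability issues separately afterwards, precisely as was done for \eqref{e:Ent-P}.
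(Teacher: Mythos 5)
Your proposal is correct and follows essentially the same route as the paper: apply the $\Gamma_2$-type identity (Lemma~\ref{lem:LSI-pf-identity}) with $Q=\id$ so that no $\ddot C$ term appears, drop the positive Hessian-squared term, use the symmetrised one-sided bound \eqref{e:assCt-asy} on $X_s$ to get a Gronwall inequality for the \emph{unweighted} gradient along $\PP_{s,t}$, and only at the endpoint bound $(\nabla\sqrt{\PP_{0,t}F})^2_{\dot C_t}\le|\dot C_t|(\nabla\sqrt{\PP_{0,t}F})^2$ before integrating via \eqref{e:Ent-P} over the support $X_t$ of $\nu_t$. The only thing to tighten is your displayed differential inequality, which mixes the $\dot C_r$-weighted form on the left with the unweighted form on the right; the evolved quantity should be unweighted throughout, exactly as your surrounding discussion indicates.
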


Again the proof is given in Section~\ref{sec:proof2}.

\begin{example}[Hierarchical models]
  \label{ex:hierarchical}
  Let $C_j = \mu_j Q_j$ be the decomposition of the hierarchical Green function
  as in \cite[Section~2.1]{MR4061408} (where we here write $\mu_j$ instead of $\lambda_j$)
  and set $\dot C_t = \sum_j 1_{(j,j+1]}(t) C_j$ and $\dot Q_t = \sum_j 1_{(j,j+1]}(t) Q_j$.
  Using the structure of the hierarchical decomposition,
  for $\varphi \in X_t$, the matrix $\He V_t(\varphi)$ is block diagonal
  with respect to scale-$j$ blocks (see \cite[Section~1.3]{MR4061408}) where $t \in (j,j+1]$ and constant on each such block.
  This means that $\He V_t(\varphi)$ commutes with $Q_t$ and by the hierarchical structure
  thus with $\dot C_t$. In particular, for $\varphi \in X_t$,
  \begin{equation} \label{e:hierbd}
    \dot C_t^{1/2} \He V_t(\varphi) \dot C_t^{1/2} \geq \dot\lambda_t \id
  \end{equation}
  implies \eqref{e:assCt-asy}.
  For hierarchical versions of the four-dimensional lattice $|\varphi|^4$ model
  in the approach of the critical point,
  and for the two-dimensional 
  lattice sine-Gordon model in the rough (Kosterlitz--Thouless) phase,
  we established the estimate \eqref{e:hierbd} for integer $t$ 
  (and appropriate $\dot\lambda_t$) in \cite{MR4061408}.
  By the same methods,
  one can extend those estimates to noninteger $t$
  with $-\dot\lambda_t = O(-\dot\lambda_j)$ for $t \in (j,j+1]$.
  Using Theorem~\ref{thm:LSI-asy} instead of  \cite[Theorem~2.1]{MR4061408},
  the theorems for the spectral gap in \cite{MR4061408} can thus
  be extended to analogous ones for the Log-Sobolev constant.
\end{example}

Further variations of the conditions \eqref{e:assCt-mon} and \eqref{e:assCt-asy} for the Log-Sobolev
inequality are possible and might be useful in other applications,
but we do not investigate these here.

\subsection{Proof of Proposition~\ref{prop:polchinski}}
\label{sec:proof1}

We start with the proof of Proposition~\ref{prop:polchinski}.
This is a straightforward computation from the definitions.

\begin{proof}[Proof of Proposition~\ref{prop:polchinski}]
Let $Z_t(\varphi)= \EE_{C_t} e^{-V_0(\varphi+\zeta)}$.
By a well-known computation (see, e.g., \cite[Section~2]{rg-brief}),
it follows that the Gaussian convolution
acts as the heat semigroup with time-dependent generator $\frac12 \Delta_{\dot C_t}$, i.e.,
if $Z_0$ is $C^2$ in $\varphi$ so is $Z_t$ for any $t>0$,
that $Z_t(\varphi)>0$ for any $t$ and $\varphi$, and that for any $t>0$
such that $C_t$ is differentiable, 
\begin{equation}
  \ddp{}{t} Z_t = \frac12 \Delta_{\dot C_t} Z_t, \quad Z_0=e^{-V_0}.
\end{equation}
Therefore $V_t = -\log Z_t$ satisfies the Polchinski equation
\begin{equation}
  \ddp{}{t} V_t = - \frac{\ddp{}{t} Z_t}{Z_t}
  = -\frac{\Delta_{\dot C_t} Z_t}{2Z_t}
  = -\frac12 e^{V_t} \Delta_{\dot C_t} e^{-V_t}
  = \frac12 \Delta_{\dot C_t} V_t - \frac12 (\nabla V_t)_{\dot C_t}^2
  .
\end{equation}

That $(\PP_{s,t})$ is a semigroup, i.e., that $\PP_{r,t}\PP_{s,r} = \PP_{s,t}$
and $\PP_{t,t} = \id$ for any $s\leq r\leq t$,
follows immediately from the definition \eqref{e:P-def}
and the convolution property of Gaussian measures, i.e.,
that the sum of two independent Gaussian vectors is Gaussian with covariance
given by the sum of the covariances (again see, e.g., \cite[Section~2]{rg-brief}).
The Markov property is obvious.
To verify that its generator $\LL_t$ is given by \eqref{e:polchinski-L},
set $F_t(\varphi) = \PP_{0,t}F(\varphi) = e^{V_t(\varphi)} \EE_{C_t}(e^{-V_0(\varphi+\zeta)} F(\varphi+\zeta))$.
Then
\begin{align}
  \ddp{}{t} F_t
  &= (\ddp{}{t} V_t) F_t + e^{V_t} \frac12 \Delta_{\dot C_t} \E_{C_t}(e^{-V_0(\cdot+\zeta)} F(\cdot+\zeta))
    \nnb
  &= (\ddp{}{t} V_t) F_t + e^{V_t} \frac12 \Delta_{\dot C_t} (e^{-V_t} F_t)
    \nnb
  &= (\ddp{}{t} V_t) F_t - (\frac12 \Delta_{\dot C_t} V_t) F_t + \frac12 (\nabla V_t)_{\dot C_t}^2 F_t + \frac12 \Delta_{\dot C_t} F_t - (\nabla V_t, \nabla F_t)_{\dot C_t}
    \nnb
  &= \frac12 \Delta_{\dot C_t} F_t - (\nabla V_t, \nabla F_t)_{\dot C_t}
    \nnb
  &= \LL_t F_t
    ,
\end{align}
which is the second equality in \eqref{e:polchinski-generator}.
The third inequality in \eqref{e:polchinski-generator} follows analogously,
and the first inequality is clear from the fact that the Gaussian measure
with covariance $0$ is the Dirac measure at $0$.

The first equality in \eqref{e:polchinski-semigroup} holds by definition,
and the second one is a direct computation  from the definition \eqref{e:V-def} and the fact that $V$ satisfies \eqref{e:polchinski}:
\begin{align}
  -\ddp{}{t} \E_{\nu_t} F
  &= \E_{\nu_t}((\ddp{}{t} V_t)F  - \frac12 (\Delta_{\dot C_t} V_t)F + \frac12 (\nabla V_t)_{\dot C_t}^2F + \frac12 \Delta_{\dot C_t}F - (\nabla V_t, \nabla F)_{\dot C_t} )
  \nnb
  &= \E_{\nu_t}(\frac12 \Delta_{\dot C_t}F - (\nabla V_t, \nabla F)_{\dot C_t} )
  = \E_{\nu_t}\LL_t F.
\end{align}

Finally, \eqref{e:nusQPs} follows from \eqref{e:continuity}.
Indeed, if $F$ takes values in a compact interval $I \subset (0,\infty)$,
then $\PP_{0,t}F$ also takes values in $I$. The function $\Phi$ is smooth on $I$
and can be extended to a bounded smooth function $g$ on $\R$
such that $g|_I = \Phi|_I$. The claim now follows from \eqref{e:continuity}.
\end{proof}

\subsection{Proofs of Theorems~\ref{thm:LSI-mon}-\ref{thm:LSI-asy}}
\label{sec:proof2}

Theorems~\ref{thm:LSI-mon}-\ref{thm:LSI-asy} can be proved in the same way as the
Bakry--\'Emery criterion with the crucial difference that the original semigroup is replaced by the Polchinski semigroup,
that the corresponding potentials depend on time, and that gradients are taken in terms of a time-dependent quadratic form.
We present the primary proofs along the lines of \cite{MR3155209};
see Remark~\ref{rk:coupling} for alternative proofs using synchronous coupling as in \cite{MR3330820}.

\begin{lemma}\label{lem:LSI-pf-identity}
  Let $\LL_t$, $\PP_{0,t}$, $\dot C_t$, $V_t$ be as in Section~\ref{sec:LSI-setup}.
  Then the following identity holds
  for any $t$-independent positive definite matrix $Q$:
  \begin{equation} \label{e:LSI-pf-identity}
    (\LL_{t}-\partial_t)(\nabla \sqrt{\PP_{0,t}F})^2_{Q}
    = 2(\nabla \sqrt{\PP_{0,t}F},\He V_t \dot C_t \nabla \sqrt{\PP_{0,t}F})_{Q}
    + \frac14 (\PP_{0,t}F) |\dot C_t^{1/2} (\He \log \PP_{0,t}F) Q^{1/2}|_2^2
    ,
  \end{equation}
  where $|M|_2^2 = \sum_{p,q}|M_{pq}|^2$ denotes the squared Frobenius norm of a matrix $M=(M_{pq})$.
\end{lemma}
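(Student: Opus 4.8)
The plan is to prove the pointwise identity \eqref{e:LSI-pf-identity} by a direct computation, mirroring the classical Bakry--\'Emery $\Gamma_2$ calculation but carrying through the two new features: the time dependence (so the operator is $\LL_t - \partial_t$ rather than $\LL$), and the extra quadratic form $Q$ in the gradient. Writing $F_t = \PP_{0,t}F$ and $u_t = \log F_t$, the first step is to record the action of $\LL_t - \partial_t$ on $F_t$ itself: by \eqref{e:polchinski-generator} one has $(\partial_t - \LL_t)F_t = 0$, so $F_t$ is ``space-time harmonic'' for this operator, which is the analogue of $\TT_t F$ being harmonic for $\partial_t - \LL$ in the stationary case. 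Consequently $(\LL_t - \partial_t)$ acting on any smooth function of $F_t$ and its derivatives will produce only the ``carr\'e du champ'' terms, i.e.\ terms quadratic in $\nabla F_t$ coming from the second-order part $\tfrac12\Delta_{\dot C_t}$.

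Next I would compute $(\LL_t - \partial_t)(\nabla\sqrt{F_t})^2_Q$. It is cleanest to write $\sqrt{F_t} = e^{u_t/2}$ so that $(\nabla\sqrt{F_t})^2_Q = \tfrac14 e^{u_t}(\nabla u_t)^2_Q$, and to use the standard chain-rule identity for the generator $\LL_t = \tfrac12\Delta_{\dot C_t} - (\nabla V_t,\nabla\cdot)_{\dot C_t}$ acting on products; the key sub-identity is the commutation/Bochner-type formula expressing $\LL_t (\nabla u_t)^2_Q$ in terms of $(\nabla \LL_t u_t, \nabla u_t)_Q$ plus a Hessian square term. Since $u_t = \log F_t$ and $F_t$ is space-time harmonic, one gets $(\partial_t - \LL_t) u_t = -\tfrac12(\nabla u_t)^2_{\dot C_t}$ (the log of a harmonic function picks up exactly the nonlinear Polchinski-type term), and differentiating this relation in space produces the term $2(\nabla\sqrt{F_t}, \He V_t\,\dot C_t\,\nabla\sqrt{F_t})_Q$ on the right-hand side --- this is where the Hessian of $V_t$ enters, through the drift $-(\nabla V_t,\nabla\cdot)_{\dot C_t}$ in $\LL_t$. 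Crucially, because $Q$ is $t$-independent, $\partial_t$ passes through the $Q$-pairing without generating extra terms, which is why the identity is stated for constant $Q$; the only $t$-derivatives that act nontrivially are on $F_t$ and $u_t$.

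The remaining term is the Hessian square. Collecting the second-order contributions, the carr\'e du champ of $\tfrac12\Delta_{\dot C_t}$ applied to $\tfrac14 e^{u_t}(\nabla u_t)^2_Q$ yields, after the $e^{u_t}$ is factored out, a quantity of the form $\tfrac14 F_t$ times a sum of squares built from $\dot C_t^{1/2}(\He u_t) Q^{1/2}$; identifying this sum of squares as the squared Frobenius norm $|\dot C_t^{1/2}(\He\log F_t) Q^{1/2}|_2^2$ is the bookkeeping heart of the computation. Here one uses $\He u_t = \He\log F_t$ and the general fact that for a symmetric matrix $M$ and positive definite $R$, the ``trace'' expression $\sum_{ij}(R)_{ij}(\cdot)$ arising from $\Delta_R$ acting on $(\nabla u)^2_Q$ reorganises into $|R^{1/2} M Q^{1/2}|_2^2$ up to the lower-order cross terms that have already been accounted for by the $\He V_t$ term. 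I expect the main obstacle to be precisely this algebraic reorganisation: keeping careful track of the three matrices $\dot C_t$ (from the generator), $Q$ (from the chosen norm on gradients), and $\He\log F_t$, and verifying that all cross-terms cancel or combine exactly into the two stated terms with no remainder. Once the pointwise identity is in hand, no integration or positivity is needed for this lemma --- that is deferred to the subsequent application in the proofs of Theorems~\ref{thm:LSI-mon}--\ref{thm:LSI-asy}, where \eqref{e:LSI-pf-identity} is integrated against $\nu_t$ and the assumption \eqref{e:assCt-mon} (respectively \eqref{e:assCt-asy}) is used to discard the nonnegative Frobenius term and bound the $\He V_t$ term from below.
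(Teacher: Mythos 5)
Your proposal takes a genuinely different route from the paper's proof, factoring through $u_t = \log F_t$ and a Bochner/$\Gamma_2$-type commutation identity rather than expanding $(\nabla F)^2_Q/(2F)$ directly in coordinates as the paper does (the paper starts from the representation $\tfrac12[\LL\tfrac{(\nabla F)^2_Q}{2F} - \tfrac{(\nabla\LL F,\nabla F)_Q}{F} + \tfrac{(\nabla F)^2_Q}{2F^2}\LL F]$ and brute-force expands). Your route is closer to the textbook $\Gamma_2$ calculus and is logically equivalent; the pieces you identify (the Leibniz decomposition, the commutator $[\partial_k,\LL_t]$ producing the $\He V_t$ term, the Hessian-square reorganising into the Frobenius norm, and the cancellation of cross-terms) are exactly the ones that appear when the computation is carried out. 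You are also right that the lemma is purely pointwise and algebraic, with all positivity and integration deferred to the subsequent theorems.

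There is, however, a genuine sign error that will derail the computation if carried out as stated. You write $(\partial_t - \LL_t)u_t = -\tfrac12(\nabla u_t)^2_{\dot C_t}$, justified by analogy with the Polchinski equation. But the Polchinski equation governs $V_t = -\log Z_t$; here $u_t = +\log F_t$, so the sign flips. Direct computation: since $\Delta_{\dot C_t}\log F_t = \tfrac{\Delta_{\dot C_t}F_t}{F_t} - \tfrac{(\nabla F_t)^2_{\dot C_t}}{F_t^2}$ and $(\nabla V_t,\nabla\log F_t)_{\dot C_t} = \tfrac{(\nabla V_t,\nabla F_t)_{\dot C_t}}{F_t}$, one finds $\LL_t u_t = \tfrac{\LL_t F_t}{F_t} - \tfrac12(\nabla u_t)^2_{\dot C_t}$, while $\partial_t u_t = \tfrac{\LL_t F_t}{F_t}$, hence
\begin{equation}
(\partial_t - \LL_t)u_t = +\tfrac12(\nabla u_t)^2_{\dot C_t}.
\end{equation}
This sign is not cosmetic. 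In the Leibniz decomposition $(\LL_t-\partial_t)\bigl[\tfrac14 F_t(\nabla u_t)^2_Q\bigr] = \tfrac14 F_t(\LL_t-\partial_t)(\nabla u_t)^2_Q + \tfrac14(\nabla F_t,\nabla(\nabla u_t)^2_Q)_{\dot C_t}$ (the term with $(\LL_t-\partial_t)F_t$ vanishes), the first bracket contains a piece $\mp(\nabla(\nabla u_t)^2_{\dot C_t},\nabla u_t)_Q$ coming from differentiating the nonlinear term in the $u_t$-PDE, and the second term equals $+\tfrac14 F_t(\nabla u_t,\nabla(\nabla u_t)^2_Q)_{\dot C_t}$. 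These two are equal as tensor expressions (interchange the roles of $\dot C_t$ and $Q$), so with the \emph{correct} sign they cancel exactly, leaving precisely the two terms in \eqref{e:LSI-pf-identity}. With your sign they add up instead, leaving a spurious extra term $\tfrac12 F_t\,\dot C_{ij}\,Q_{kl}\,u_i u_{jk} u_l$ that does not belong in the identity. Fix the sign to $+\tfrac12(\nabla u_t)^2_{\dot C_t}$ and the rest of your plan goes through.
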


\begin{proof}
  Throughout the proof, we drop the fixed index $t$,
  i.e., write $F$ instead of $\PP_{0,t}F$, and $\LL$ for $\LL_t$,
  and similarly for $\dot C_t$ and $V_t$.
  Then the left-hand side of \eqref{e:LSI-pf-identity} can be written as
  \begin{equation} \label{e:3terms}
    \frac12 \qa{ \LL \frac{(\nabla F)_{Q}^2}{2F} - \frac{(\nabla \LL F,\nabla F)_{Q}}{F}
    + \frac{(\nabla F)_{Q}^2}{2F^2} \LL F}
    .
  \end{equation}
  To compute the three terms, 
  we denote derivatives by subscripts $i,j,k,l$, and use the summation convention for these subscripts.
  The first term then is
  \begin{equation}
    \LL \frac{(\nabla F)_Q^2}{2F}
    = \frac12 \dot C_{ij}Q_{kl}\qa{ (\frac{F_kF_l}{2F})_{ij} -  2V_{i} (\frac{F_kF_l}{2F})_j}
    = \frac12 \dot C_{ij}Q_{kl}\qa{ (\frac{F_{ik}F_l}{F} - \frac{F_kF_lF_i}{2F^2})_{j} -  2 V_{i} (\frac{F_kF_l}{2F})_j}
  \end{equation}
  where the last bracket can be expanded as
  \begin{equation}
    \qa{ \frac{F_{ijk}F_l + F_{ik}F_{jl}}{F} - \frac{F_{ik}F_lF_j}{F^2}- \frac{2F_{kj}F_lF_i + F_kF_lF_{ij}}{2F^2} + \frac{F_kF_lF_iF_j}{F^3} - 2 V_{i} (\frac{F_{jk}F_l}{F} - \frac{F_kF_lF_j}{2F^2})}
    .
  \end{equation}
  The sum of the second and third terms in \eqref{e:3terms} is
  \begin{multline}
    -\frac{(\nabla \LL F, \nabla F)_{Q}}{F}
    + \frac{(\nabla F)_{Q}^2}{2F^2} \LL F
    = \frac12 \dot C_{ij} Q_{kl} \qa{\frac{ -(F_{kij}- 2 V_{i}F_{kj}- 2 V_{ik}F_j) F_l}{F}
      + \frac{(F_{ij}-2 V_iF_j)F_kF_l}{2F^2}}
    \\
    = \frac12 \dot C_{ij} Q_{kl} \qa{2 V_{ik}\frac{F_jF_l}{F} -\frac{F_{kij}F_l}{F} +\frac{F_{ij}F_kF_l}{2F^2}
      + 2 V_i (\frac{F_{kj}F_l}{F} -\frac{F_jF_kF_l}{2F^2}) }.
  \end{multline}
  By adding all three terms, we obtain that \eqref{e:3terms} equals
  \begin{equation}
  \label{e:BE-LS}
    \frac12 \dot C_{ij} Q_{kl} \frac{V_{ik} F_jF_l}{F}
    +
      \frac14 \dot C_{ij}Q_{kl}\qa{\frac{F_{ik}F_{jl}}{F} - \frac{F_{ik} F_l F_j + F_{j l}F_i F_k }{F^2} + \frac{F_kF_lF_iF_j}{F^3}}
      .
  \end{equation}
  Using that for any given indices $i,j,k,l$,
  \begin{equation}
    (\log F)_{ik} = (\frac{F_i}{F})_k = \frac{F_{ik}}{F} - \frac{F_iF_k}{F^2},
    \qquad
    (\log F)_{jk} = (\frac{F_j}{F})_l = \frac{F_{jl}}{F} - \frac{F_jF_l}{F^2},
  \end{equation}
  equation \eqref{e:BE-LS} can be written as
  \begin{equation}
    \label{e:BE-LS-2}
    \frac12 \dot C_{ij}Q_{kl} \frac{V_{ki} F_jF_l}{F} + \frac14 F \dot C_{ij}Q_{kl} (\log F)_{ik}(\log F)_{jl}
    .
  \end{equation}
  Using that $2(\sqrt{F})_j = F_j/\sqrt{F}$ for the first term,
  and that, for any symmetric matrix $M$,
  \begin{align}
    \dot C_{ij}Q_{kl} M_{ik}M_{jl}
    =  \dot C_{ip}^{1/2}\dot C_{jp}^{1/2} Q^{1/2}_{kq} Q^{1/2}_{lq} M_{ik}M_{jl}
    &=  \dot C_{ip}^{1/2}\dot C_{jp}^{1/2} (MQ^{1/2})_{iq} (M Q^{1/2})_{jq}
      \nnb
    &= (\dot C^{1/2} M Q^{1/2})_{pq} (\dot C^{1/2} M Q^{1/2})_{pq}
  \end{align}
  for the second term,
  \eqref{e:BE-LS-2} can therefore be written as
  \begin{equation}
    2(\nabla \sqrt{F},\He V \dot C \nabla \sqrt{F})_Q
    + \frac14 F |\dot C^{1/2} (\He \log F) Q^{1/2}|_2^2
    .
    \qedhere
  \end{equation}
  \end{proof}

\begin{proof}[Proof of Theorem~\ref{thm:LSI-mon}]
  Lemma~\ref{lem:LSI-pf-identity} with $Q=\dot C_s$ implies
  \begin{multline} \label{e:LSI-pf-identity-t-mon}
    (\LL_{s}-\partial_s)(\nabla \sqrt{\PP_{0,s}F})^2_{\dot C_s}
    = 2(\nabla \sqrt{\PP_{0,s}F},\He V_s \dot C_s \nabla \sqrt{\PP_{0,s}F})_{\dot C_s}
    - (\nabla \sqrt{\PP_{0,s}F})^2_{\ddot C_s}
    \\
    + \frac14 (\PP_{0,s}F) |\dot C_s^{1/2} (\He \log \PP_{0,s}F) \dot C_s^{1/2}|_2^2
    .
  \end{multline}
  By the assumption \eqref{e:assCt-mon} and since the last term is positive, it follows that
  \begin{equation}
    (\LL_{s}-\partial_s)(\nabla \sqrt{\PP_{0,s}F})^2_{\dot C_s}
    \geq 2\dot\lambda_{s} (\nabla \sqrt{\PP_{0,s}F})^2_{\dot C_s}
    .
  \end{equation}
  Equivalently,
  $\psi(s) := e^{-2\lambda_t+2\lambda_s} \PP_{s,t} \left[ (\nabla \sqrt{\PP_{0,s}F})^2_{\dot C_s} \right]$ satisfies $\psi'(s) \leq 0$ for $s<t$.
  This implies
  \begin{equation}
    (\nabla \sqrt{\PP_{0,t}F})^2_{\dot C_t} = \psi(t) \leq \psi(0) 
    = e^{-2\lambda_t} \PP_{0,t} \left[ (\nabla \sqrt{F})^2_{\dot C_0} \right]
    \leq |\dot C_0|  \, e^{-2\lambda_t} \PP_{0,t} \left[ (\nabla \sqrt{F})^2 \right]
    .
  \end{equation}
  By \eqref{e:Ent-P}, then \eqref{e:LSI-mon} follows.
\end{proof}

\begin{proof}[Proof of Theorem~\ref{thm:LSI-asy}]
  Lemma~\ref{lem:LSI-pf-identity} with $Q=\id$ implies
  \begin{multline} \label{e:LSI-pf-identity-t-asy}
    (\LL_{s}-\partial_s)(\nabla \sqrt{\PP_{0,s}F})^2
    = 2(\nabla \sqrt{\PP_{0,s}F},\He V_s \dot C_s \nabla \sqrt{\PP_{0,s}F})
    \\
    + \frac14 (\PP_{0,s}F) |\dot C_s^{1/2} (\He \log \PP_{0,s}F)|_2^2
    .
  \end{multline}
  By the assumption \eqref{e:assCt-asy} and since the last term is positive, it follows that,
  on $X_s$,
  \begin{equation}
    (\LL_{s}-\partial_s)(\nabla \sqrt{\PP_{0,s}F})^2
    \geq 2\dot\lambda_{s} (\nabla \sqrt{\PP_{0,s}F})^2
    .
  \end{equation}
  Equivalently, pointwise on $X_t$,
  $\psi(s) := e^{-2\lambda_t+2\lambda_s} \PP_{s,t} \left[ (\nabla \sqrt{\PP_{0,s}F})^2 \right]$ satisfies $\psi'(s) \leq 0$ for $s<t$.
  This implies, on $X_t$,
  \begin{equation}
    (\nabla \sqrt{\PP_{0,t}F})^2_{\dot C_t}
    \leq
    |\dot C_t|(\nabla \sqrt{\PP_{0,t}F})^2
    = |\dot C_t| \psi(t) \leq |\dot C_t| \psi(0) 
    = |\dot C_t| e^{-2\lambda_t} \PP_{0,t} \left[ (\nabla \sqrt{F})^2 \right].
  \end{equation}
  Again by \eqref{e:Ent-P},
  using that $\nu_t$ is supported on $X_t$,
  \eqref{e:LSI-mon} follows.
\end{proof}

\begin{remark} \label{rk:coupling}
  Using the representation \eqref{e:stochastic}-\eqref{e:stochastic2}
  of the semigroup $\PP_{s,t}$ in terms of a stochastic process (that evolves backwards in time from $t$ to $s$),
  one can alternatively prove the theorems using synchronous coupling as in \cite{MR3330820}.
\end{remark}

\section{Application to the continuum sine-Gordon model}
\label{sec:SG}

In this section, we prove Theorems~\ref{thm:sg-4pi} and \ref{thm:sg-4pi-kawasaki} by applying Theorem~\ref{thm:LSI-heat}.
While it is not necessary, we find it clearest to rescale the continuum sine-Gordon model at scale $\epsilon$
to a unit lattice problem.

\subsection{Rescaling and heat kernel decomposition}

Identifying $\Omega_{\epsilon,L}$ with the unit lattice $\Lambda = \frac{1}{\epsilon}\Omega_{\epsilon,L}$,
the continuum sine-Gordon model $\nu_{\epsilon,L}$ is equivalent to a spin system whose coupling matrix is given by the nearest neighbour Laplacian
on $\Z^d$.
We will thus drop the subscripts $\epsilon,L$ now,
and write $\nu_0$ for the measure of the form \eqref{e:nu0-A} with $X=\R^\Lambda$ and
\begin{equation} \label{e:sg-AV0}
  A = -\Delta_\Lambda+\epsilon^2m^2,\qquad
  V_{0}(\varphi)
  = \sum_{x\in\Lambda} z\epsilon^{2-\beta/4\pi} \cos(\sqrt{\beta} \varphi_x),
\end{equation}
where $\Delta_\Lambda$ is the standard unit lattice Laplacian acting on the discrete torus of side length $L/\epsilon$.
We emphasise that throughout this section $\Delta_\Lambda$ denotes the lattice Laplacian on $\Lambda$ and not the
Laplacian on $\R^\Lambda$ which we denoted $\Delta_{\dot C_t}$ in the previous section.
Note that $\varphi$ is not rescaled.
As is natural in this normalisation, we normalise the Glauber Dirichlet form,
for $F: \R^\Lambda \to \R$, by
\begin{equation}
  \sum_{x\in\Lambda} \E_{\nu_0} \qa{ \pa{\ddp{F}{\varphi_x}}^2}.
\end{equation}
Note that in this normalisation
the Log-Sobolev constant of the non-interacting (Gaussian) model with $z=0$ scales as $\epsilon^{2}m^2$
(corresponding to the unit order Log-Sobolev constant $m^2>0$ in the continuum scaling).
Also note that the correlation length of the non-interacting model scales as $1/(m\epsilon)$,
making it natural to assume $L \geq 1/m$ as in the statements of the theorems.

In the following, we will use Theorem~\ref{thm:LSI-heat}
to prove the same scaling in $\epsilon$ for the Log-Sobolev constant of the interacting model.
To verify the assumptions of Theorem~\ref{thm:LSI-heat},
we will prove the following estimates on $V_t$ as defined in \eqref{e:V-def}.
We recall that $Q_t=e^{-tA/2}$ denotes the heat kernel on the index space $\Lambda$.

\begin{proposition} \label{prop:SG-HeVC}
Let $\beta<6\pi$, and $L>0$, $m>0$, and $z\in \R$. 
Then \eqref{e:ergodicity} holds, and for all $t \geq 0$,
\begin{equation} \label{e:SG-HeVC}
  Q_t\He V_t(\varphi)Q_t \geq \dot\mu_t \id,
\end{equation}
where $\mu_t = \int_0^t \dot\mu_s \, ds$ satisfies 
\begin{equation} \label{e:sg-mu-bd}
  |\mu_t|
  \leq \mu^*
\end{equation}
with $\mu^* = \mu^*(\beta,z,m,L)$ independent of $\epsilon>0$.
Moreover, there is $\delta_\beta>0$ such that if
\begin{equation} \label{e:small}
  Lm \geq 1, \quad \text{and} \quad |z|m^{-2+\beta/4\pi} \leq \delta_\beta,
\end{equation}
then the optimal bound satisfies $\mu^* = O_\beta(|z| m^{-2+\beta/4\pi})$ uniformly in $L$.
\end{proposition}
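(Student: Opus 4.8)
The goal is to prove Proposition~\ref{prop:SG-HeVC}, i.e.\ the Hessian bound $Q_t\He V_t(\varphi)Q_t \geq \dot\mu_t\id$ with $|\mu_t|\leq \mu^*$ uniform in $\epsilon$, and $\mu^* = O_\beta(|z|m^{-2+\beta/4\pi})$ in the small-coupling regime.

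\medskip

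\textbf{Plan of proof.} The starting point is that $V_t$ solves the Polchinski equation \eqref{e:polchinski} with $\dot C_t = Q_t^2 = e^{-tA}$, $A = -\Delta_\Lambda + \epsilon^2 m^2$, and initial data $V_0$ a sum of cosines with coupling $z\epsilon^{2-\beta/4\pi}$. The first step is to set up the renormalisation-group analysis of this cosine perturbation following the approach of Dimock--Hurd \cite{MR914427}: one writes $V_t$ as a convergent expansion in the activity, organised so that the leading term is the linearised flow (Gaussian convolution acting on $V_0$) and the remainder is controlled by a fixed-point/Mayer-expansion argument. The key quantitative input is the Wick-ordering constant: convolving $\cos(\sqrt\beta\varphi)$ against the Gaussian with covariance $C_t$ produces a factor $e^{-\frac\beta2 C_t(0,0)}$, and with the normalisation $z_\epsilon = z\epsilon^{-\beta/4\pi}$ one has $\epsilon^{2-\beta/4\pi} e^{-\frac\beta2 C_t(0,0)} = z \cdot (\text{bounded in }\epsilon)\cdot (\ell_t)^{2-\beta/4\pi}$ where $\ell_t \sim \sqrt t$ is the running length scale; since $\beta < 8\pi$ this is a \emph{contraction} as $t$ grows, which is the mechanism that makes the expansion converge and keeps the effective activity small at all scales. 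This is where the restriction $\beta<6\pi$ will enter: at $\beta = 4\pi$ the second-order term in the activity becomes marginal, and beyond $4\pi$ one needs the smoothing by $Q_t$ (already emphasised after Theorem~\ref{thm:LSI-heat}); $\beta<6\pi$ is the threshold up to which the Dimock--Hurd-style expansion, with the particular iterated-Mayer organisation, still closes with the estimates we need on \emph{two} derivatives of $V_t$.

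\medskip

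\textbf{Key steps, in order.} (i) Rescale to the unit lattice (already done, \eqref{e:sg-AV0}) and record the finite-range / decay properties of the heat-kernel decomposition $\dot C_t = Q_t^2$, in particular pointwise bounds on $\dot C_t(x,y)$ and on $\sum_y|\dot C_t(x,y)|$, uniformly in $\epsilon, L$, together with the behaviour of $C_t(0,0)$. (ii) Differentiate the Polchinski equation twice: using \eqref{e:PtHessV}, or directly differentiating \eqref{e:V-def}, express $\He V_t(\varphi)$ via the semigroup $\PP_{0,t}$ acting on $\He V_0$ minus a manifestly negative-semidefinite ``fluctuation'' term $\PP_{0,t}((f,\nabla V_0)^2) - (\PP_{0,t}(f,\nabla V_0))^2 \geq 0$. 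Since $\He V_0$ is itself a sum of $\cos$ terms times $-\beta z\epsilon^{2-\beta/4\pi}$, everything reduces to estimating $\PP_{0,t}$ applied to functions of the form $g(\varphi) = \sum_x a_x(\varphi) e^{\pm i\sqrt\beta\varphi_x}$. (iii) Carry out the RG estimate on $\PP_{0,t}$ of such observables: this is the heart of the matter and is where \cite{MR914427} is invoked. One shows, by the norm-estimates of that theory (weighted sup norms on the effective activities, propagated by the flow), that $\|Q_t\He V_t(\varphi)Q_t\|$ is bounded by a constant times $|z|m^{-2+\beta/4\pi}$ times a summable-in-$t$ density, which gives $|\dot\mu_t|$ integrable and hence $|\mu_t|\leq\mu^*$ with the claimed size. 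The two conjugating factors of $Q_t$ are essential here: they convert a sum over $x$ of local pieces (each of size $O(\epsilon^{2-\beta/4\pi} e^{-\frac\beta2 C_t(0,0)})$, which alone would only be bounded, not small, and would not even be summable over scales once $\beta>4\pi$) into a genuinely small operator, because $Q_t$ spreads mass over $\sim \ell_t^d$ sites and $\|Q_t (\text{diagonal of size }\delta_x) Q_t\| \lesssim \sup_x \delta_x$ while the off-diagonal, oscillatory contributions are suppressed by the decay of $\dot C_t$. (iv) Verify the ergodicity assumption \eqref{e:ergodicity}: since the single-site measure has a uniformly bounded-below Gaussian part ($\epsilon^2 m^2>0$) and a bounded perturbation, the Polchinski semigroup is easily seen to be ergodic — this is the routine ``qualitative'' part.

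\medskip

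\textbf{Main obstacle.} The substantive difficulty is entirely in step (iii): obtaining \emph{quantitative} control of two $\varphi$-derivatives of the renormalised potential, uniformly in the lattice spacing $\epsilon$ and the volume $L$, in the regime $4\pi \leq \beta < 6\pi$ where the naive perturbative expansion is not absolutely convergent and one must use the iterated Mayer expansion / cluster-expansion machinery of \cite{MR914427} adapted to track derivatives and the action of the smoothing operators $Q_t$. One must set up the right family of norms on the scale-$t$ effective potentials (sup over $\varphi$ of weighted derivatives, with weights adapted to the running scale $\ell_t$), show these norms are propagated by the Polchinski flow with a contraction factor $\ell_t^{2-\beta/4\pi}$ per ``vertex,'' and extract from the resulting bound both the integrability of $\dot\mu_t$ over $t\in[0,\infty)$ (using the large-$t$ decay coming from $\epsilon^2 m^2$ in $A$, equivalently the mass gap) and the small-$|z|$ smallness in \eqref{e:small}. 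Getting the power $2-\beta/4\pi$ to be usable up to $\beta<6\pi$ — rather than only $\beta<4\pi$ — is precisely what the smoothing by $Q_t$ buys, and making that gain rigorous for the Hessian (a second-order object) is the crux.
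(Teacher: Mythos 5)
Your high-level framework is right (Polchinski flow, running coupling $z_t = z_0 e^{-\frac{\beta}{2}C_t(0,0)}$, a Brydges--Kennedy \cite{MR914427} expansion in the activity, smoothing by $Q_t$ as the new ingredient beyond $4\pi$), and this matches the paper's Sections 3.2--3.6. But the crux of step (iii) is left to a mechanism that does not work. You argue that $Q_t$ helps because it ``spreads mass over $\sim\ell_t^d$ sites'' so that $\|Q_t(\mathrm{diag})Q_t\|\lesssim\sup_x\delta_x$; that is just the operator bound $|Q_tf|_2\leq\vartheta_t|f|_2$, which is exactly the argument used for $\beta<4\pi$ (see \eqref{e:HessCbd}) and provably fails for $\beta\geq 4\pi$. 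The obstruction is the \emph{neutral} two-charge term: the paper computes $\tilde V_t(\xi_1,\xi_2)$ explicitly in \eqref{e:V2}, and for a neutral pair one gets $V_t^{(2,0)}=z_t^2\sum_{x,y}U_t(x,y)\cos(\sqrt\beta(\varphi_x-\varphi_y))$ with $U_t(x,y)=e^{\beta C_t(x,y)}-1\sim(|x-y|/\ell_t)^{-\beta/2\pi}$, which is \emph{not} summable against any uniform bound once $\beta\geq4\pi$: one finds $z_t^2\sup_x\sum_yU_t(x,y)\asymp z_0^2$, and integrating this over scales up to $t\sim\epsilon^{-2}m^{-2}$ gives $|\mu_t|\gtrsim z^2\epsilon^{2-\beta/2\pi}m^{-2}\to\infty$. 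The actual mechanism is a \emph{dipole cancellation}: because $V_t^{(2,0)}$ depends only on differences $\varphi_x-\varphi_y$, its Hessian is a difference operator, $(\lQ_tf,\He V^{(2,0)}_t\lQ_tf)=-\beta z_t^2\sum_{x,y}U_t(x,y)\cos(\cdots)\,(\lQ_tf(x)-\lQ_tf(y))^2$, and the Lipschitz bound $|\lQ_tf(x)-\lQ_tf(y)|\lesssim(|x-y|/\ell_t)\cdot(\text{decaying kernel})$ supplies the extra factor $(|x-y|/\ell_t)^2$ that renders $U_t$ summable (Proposition~\ref{prop:Vtbd-thres1-2} and \eqref{e:thresh1-Cbd2new}). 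Your proposal never introduces the neutral/charged decomposition, and so never isolates this structure; without it the estimate in (iii) cannot close for $4\pi\leq\beta<6\pi$. The same gradient gain, in the form of the discrete dipole kernels $\delta_{12}\lCdot_t$ and $\delta_{34}\delta_{12}\lCdot_t$, is what makes the $n=3,4$ terms work and is where the $6\pi$ threshold genuinely enters (via \eqref{e:Usumx} with $k=1$ and \eqref{e:intzlbd} with $n=4$); your plan gives no account of why $6\pi$ rather than some other value appears.

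A secondary issue: your step (ii) proposes to run the identity \eqref{e:PtHessV} from $s=0$, writing $\He V_t$ as $\PP_{0,t}\He V_0$ minus a variance term. Since the variance is subtracted, a \emph{lower} bound on $Q_t\He V_tQ_t$ requires an \emph{upper} bound on $\PP_{0,t}((f,\nabla V_0)^2)-(\PP_{0,t}(f,\nabla V_0))^2$, and at second order in $z$ this variance contains precisely the singular kernel $U_t(x,y)$ paired with $f_xf_y$ rather than with $(f_x-f_y)^2$; the dipole cancellation is a cancellation \emph{between} this term and the order-$z^2$ part of $\PP_{0,t}\He V_0$, which your formulation hides rather than exhibits. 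The paper instead works directly with the Fourier coefficients of $V_t$ via the Duhamel form \eqref{e:polchinski-fourier-duhamel}, where the cancellation is manifest, and reserves \eqref{e:PtHessV} (started from $s=t_0>0$, not $s=0$) for the crude argument that removes the smallness assumption \eqref{e:small} in Section 3.7 --- a part of the proposition your proposal does not address at all.
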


Indeed, Theorem~\ref{thm:sg-4pi} is an immediate consequence of these estimates and Theorem~\ref{thm:LSI-heat}.

\begin{proof}[Proof of Theorem~\ref{thm:sg-4pi}]
The smallest eigenvalue of $A$ is $\lambda = \epsilon^2m^2$.
By \eqref{e:gammadef-heat} and \eqref{e:sg-mu-bd}, therefore
\begin{equation}
  \frac{1}{\gamma} =
  \int_0^\infty e^{-\lambda t-2\mu_t} \, dt
  \leq
  e^{2\mu^*} \int_0^\infty e^{-\lambda t} \, dt
  =
  \frac{e^{2\mu^*}}{\lambda}
  =
  \frac{e^{2\mu^*}}{\epsilon^2m^2}
  ,
\end{equation}
and Theorem~\ref{thm:LSI-heat} implies that $\nu_0$ satisfies a Log-Sobolev inequality with constant $\gamma$.
In the continuum normalisation of the Dirichlet form
\eqref{e:Dirichlet-eps},
the sine-Gordon measure thus satisfies a Log-Sobolev inequality with constant given by
$m^2 e^{-2\mu^*}$. Moreover, if \eqref{e:small} holds,
then $m^2e^{-2\mu^*} = m^2 + O_\beta(m^{\beta/4\pi}|z|)$.
\end{proof}

The proof of Theorem~\ref{thm:sg-4pi-kawasaki} for Kawasaki dynamics is almost the same as that of Theorem~\ref{thm:sg-4pi}.
The constraint measure $\nu_0^0$ can be written as in \eqref{e:nu0-Cinfty}, with the degenerate
covariance matrix $C_\infty^0$ supported on the subspace $X = \R^\Lambda_0 = \{ \varphi \in \R^\Lambda: \sum_x \varphi_x=0 \}$
given by
\begin{equation} \label{e:Cinfty0}
  C_\infty^0 = PA^{-1}P, \qquad \text{where } 
 P\varphi_x = \varphi_x -  \frac{1}{|\Lambda|}\sum_{y \in\Lambda} \varphi_y.
\end{equation}
In unit lattice scaling, the Dirichlet form for Kawasaki dynamics is given, for $F: \R^\Lambda_0 \to \R$, by
\begin{equation}
  \sum_{x\sim y\in\Lambda} \E_{\nu^0_0} \qa{ \pa{\ddp{F}{\varphi_x}-\ddp{F}{\varphi_y}}^2}.
\end{equation}
We decompose the covariance matrix $C_\infty^0$ in terms of
\begin{equation}
  \dot C_t^0 = e^{-tA} P,
  \qquad
  Q_t^0 = e^{-tA/2} P,
\end{equation}
and define $V_t^0$ as in \eqref{e:V-def} with respect to $\dot C_t^0$.
From now on, we will refer to the case that $V_t$ is replaced by $V_t^0$ and $\dot C_t$
by $\dot C_t^0$ as the \emph{conservative case}.
Then the statement of
Proposition~\ref{prop:SG-HeVC} remains true in the conservative case.

\begin{proposition} \label{prop:SG-HeVC-kawasaki}
  Let $\beta<6\pi$, and $L>0$, $m>0$, and $z\in \R$.
  Then \eqref{e:ergodicity} holds, and for all $t \geq 0$,
  \begin{equation}
    Q_t^0 \He V_t^0(\varphi)Q_t^0 \geq \dot\mu_t P,
  \end{equation}
  where $\mu_t$ satisfies \eqref{e:sg-mu-bd} with the same bound on $\mu^*$
  if \eqref{e:small} holds.
\end{proposition}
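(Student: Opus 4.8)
The plan is to prove Proposition~\ref{prop:SG-HeVC-kawasaki} by running the proof of Proposition~\ref{prop:SG-HeVC} verbatim, with $\dot C_t$ and $V_t$ replaced by $\dot C_t^0$ and $V_t^0$. The proof of Proposition~\ref{prop:SG-HeVC} (carried out below in this section) uses the covariance decomposition only through two features: that $(\dot C_t^0)_{t\ge 0}$ is a positive semidefinite decomposition of $C_\infty^0$ which, restricted to the field space $X=\R^\Lambda_0$, is of heat-kernel type; and a collection of pointwise estimates on the kernels $\dot C_t^0(x,y)$ (diagonal size, spatial decay, regularity), uniform in $\epsilon$, which feed into the Mayer-expansion/renormalisation-group analysis of \cite{MR914427} controlling $\He V_t^0$. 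So the task reduces to transferring these features from $\dot C_t$ to $\dot C_t^0$.

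First I would record the elementary relation between the two decompositions. Since $\mathbf 1$ is the eigenvector of $A=-\Delta_\Lambda+\epsilon^2m^2$ with eigenvalue $\epsilon^2m^2$, the projection $P$ of \eqref{e:Cinfty0} commutes with $e^{-tA}$, and $P^2=P$, so
\[
  \dot C_t^0 = e^{-tA}P = Pe^{-tA}P = \dot C_t - \frac{e^{-t\epsilon^2m^2}}{|\Lambda|}\,\mathbf 1\mathbf 1^{\mathsf T},
  \qquad\text{i.e.}\qquad
  \dot C_t^0(x,y) = \dot C_t(x,y) - \frac{e^{-t\epsilon^2m^2}}{|\Lambda|}.
\]
In particular $\dot C_t^0$ is positive semidefinite, $C_t^0=\int_0^t\dot C_s^0\,ds$ increases to $C_\infty^0=PA^{-1}P$, and on $X$ one has $\dot C_t^0=e^{-t(A|_X)}$ and $Q_t^0=e^{-t(A|_X)/2}$, so the decomposition is of heat-kernel type there. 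Moreover the correction is spatially constant, non-negative, and bounded: $0\le \dot C_t(x,y)-\dot C_t^0(x,y)\le e^{-t\epsilon^2m^2}/|\Lambda|$, hence $0\le C_t(x,x)-C_t^0(x,x)\le \frac{1}{|\Lambda|\epsilon^2m^2}=\frac{1}{(Lm)^2}\le 1$ under \eqref{e:small}. Consequently every pointwise bound used in the proof of Proposition~\ref{prop:SG-HeVC} holds for $\dot C_t^0$ in the same form, with constants changed by at most a $\beta$-dependent factor $e^{O_\beta(1)}$ coming from the shift of $C_t(x,x)$ (which enters only through factors of the type $e^{-\beta C_t(x,x)/2}$, and similarly $e^{\pm\beta C_t(x,y)}$ for two-charge terms). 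The bound $Q_t^0\He V_t^0(\varphi)Q_t^0\ge\dot\mu_t P$ with $\mu_t$ obeying \eqref{e:sg-mu-bd}, and with $\mu^*=O_\beta(|z|m^{-2+\beta/4\pi})$ under \eqref{e:small}, then follows exactly as before; uniformity in $L$ is retained because for $Lm$ large the correction $\tfrac{1}{(Lm)^2}$ in fact tends to $0$.

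It remains to check the ergodicity assumption \eqref{e:ergodicity} in the conservative case, but this is again purely qualitative: $\nu_0^0$ is a bounded, smooth perturbation of the nondegenerate Gaussian $\EE_{C_\infty^0}$ on the finite-dimensional space $\R^\Lambda_0$, so the verification of \eqref{e:ergodicity} used for $\nu_0$ applies unchanged. The only genuinely new point — and the reason a separate statement is needed rather than a one-line remark — is that $\dot C_t^0$ is no longer a strictly local operator, since $P$ is global; I expect this to be the main, but mild, obstacle. It is resolved by the observation above that the renormalisation-group estimates of \cite{MR914427} depend on $\dot C_t^0$ only through the pointwise size, decay, and regularity of its kernel, all of which differ from those of $\dot C_t$ by the single uniformly bounded, spatially constant, non-negative additive term displayed above, so no step of the argument is affected. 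Thus the proof of Proposition~\ref{prop:SG-HeVC} transfers with only notational changes.
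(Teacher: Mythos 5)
Your proposal matches the paper's own approach: the paper proves Propositions~\ref{prop:SG-HeVC} and \ref{prop:SG-HeVC-kawasaki} together, invoking at each step that the relevant heat-kernel estimates "hold in the conservative case," and establishes this in Appendix~\ref{app:dotC} by exactly the observation you make, namely that $\dot C_t^0(x,y) = \dot C_t(x,y) - \vartheta_t^2/|\Lambda|$ so that $C_t^0(0,0) = C_t(0,0) + O(1)$ under $Lm\ge 1$. Your write-up is in fact somewhat more explicit than the paper's (which tracks the conservative case via brief remarks in each lemma), but the substance is the same.
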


Analogously as in the proof of Theorem~\ref{thm:sg-4pi},
we deduce Theorem~\ref{thm:sg-4pi-kawasaki} from Proposition~\ref{prop:SG-HeVC-kawasaki}.

\begin{proof}[Proof of Theorem~\ref{thm:sg-4pi-kawasaki}]
Since $\Lambda$ is a discrete torus of side length $L/\epsilon$,
the smallest nonzero eigenvalue of the lattice Laplacian $-\Delta_\Lambda$ on $\Lambda$ is of order $(\epsilon/L)^2$.
We thus denote the smallest nonzero eigenvalue of $-\Delta_\Lambda$ on $\Lambda$  by $\zeta^2\epsilon^2$.
Explicitly, as $\epsilon \to 0$,
\begin{equation}
  \zeta^2 \to (\frac{2\pi}{L})^2.
\end{equation}
As in the proof of Theorem~\ref{thm:sg-4pi},
with $\lambda$ the smallest eigenvalue on $X$ of $A=-\Delta_\Lambda + \epsilon^2m^2$,
\begin{equation}
  \frac{1}{\gamma^0} \leq \frac{e^{2\mu^*}}{\lambda}=  \frac{e^{2\mu^*}}{\epsilon^2(\zeta^2+m^2)},
\end{equation}
and Theorem~\ref{thm:LSI-heat} implies that $\nu_0^0$
satisfies a Log-Sobolev inequality with constant $\gamma^0$:
\begin{equation}
  \ent_{\nu_0^0}(F)
    \leq \frac{e^{2\mu^*}}{\epsilon^2(m^2+\zeta^2)} \E_{\nu_0^0}(\nabla F,P\nabla F)
    \leq \frac{e^{2\mu^*}}{\epsilon^4\zeta^2(m^2+\zeta^2)} \E_{\nu_0^0}(\nabla F, -\Delta_\Lambda P \nabla F)
\end{equation}
where the last inequality again uses that the smallest nonzero eigenvalue of the lattice Laplacian $-\Delta$ is $\epsilon^{2}\zeta^2$.
We emphasise that $\nabla$ denotes the continuous gradient on $\R^\Lambda$ while $\Delta_\Lambda$ is the lattice Laplacian on $\Lambda$.
Recalling the continuum normalisation of the Dirichlet form given by \eqref{e:Dirichlet-Kawa-eps},
and \eqref{e:sg-mu-bd},
this is the claim of Theorem~\ref{thm:sg-4pi-kawasaki}.
\end{proof}

\subsection{Outline, scaling conventions, and heat kernel}

To prove Propositions~\ref{prop:SG-HeVC}-\ref{prop:SG-HeVC-kawasaki}, we proceed in the following steps.
We first consider the main case \eqref{e:small}.
The proofs are simpler for $\beta<4\pi$ and we begin with this case in Section~\ref{sec:BK}.
In Sections~\ref{sec:6pi}-\ref{sec:6pi-ngt2}, we extend this analysis to the case $\beta<6\pi$.
Finally, in Section~\ref{sec:zlarge}, we show that a crude argument suffices to remove the assumption \eqref{e:small}
at the cost of constants that are uniform in $\epsilon$ but not in $L$. 

To prove Propositions~\ref{prop:SG-HeVC}-\ref{prop:SG-HeVC-kawasaki},
we will require estimates on the heat kernel decomposition
\begin{equation}
  C_t = \int_0^t \dot C_s \, ds, \qquad \dot C_s = Q_s^2 = e^{-s A}.
\end{equation}
In this section, we
set-up a convenient normalisation and also collect some elementary estimates.
We have chosen the heat kernel decomposition
(and not a finite range decomposition, for example)
to be able to directly apply Theorem~\ref{thm:LSI-heat}.
The \emph{characteristic length scale} of the heat kernel is defined by
\begin{equation} \label{e:elldef}
  \ell_t= (1\vee \sqrt{t}) \wedge \frac{1}{\epsilon m}
\end{equation}
and we set
\begin{equation}
  \lQ_t = \ell_t \uQ_t,
  \qquad
  \lCdot_t = \ell_t^2\uCdot_t,
  \qquad
  \vartheta_t = e^{-\frac12 m^2\epsilon^2 t}.
\end{equation}
Standard estimates on the heat kernel imply that
$\uCdot_t(x,y)$ is essentially supported on $|x-y| \lesssim \ell_t$
and the above normalisation is such that
$\lCdot_{\lambda^2 t}(\lambda x,\lambda y) \approx \lCdot_t(x,y)$ and $\lQ_t^2 =\lCdot_t$.
We will often express estimates in terms of these quantities
and in terms of $\ell_t$ (instead of $t$),
and write integrals over the scale in terms of the approximately scale invariant
measure $dt/\ell_t^2 \approx dt/t$ (instead of $dt$).
For estimates involving the heat kernels $\uQ_t,\uCdot_t,C_t$ and its scaled versions, we will always impose the following assumption:
\begin{equation} \label{e:assLm}
  Lm \geq 1, \quad \text{or} \quad t \leq \frac{1}{\epsilon^2}\pa{\frac{1}{m^2} \wedge L^2}.
\end{equation}

The next lemma provides some elementary estimates on the heat kernel.
These are sufficient for the case $\beta<4\pi$;
for $\beta>4\pi$ more precise estimates are required (and will be stated in the section
they are used).
All of these
estimates on the heat kernel are collected in Appendix~\ref{app:dotC}.

\begin{lemma} \label{lem:dotC}
  Assume \eqref{e:assLm}.
  For any $x \in \Lambda$,
  \begin{equation}
    \label{e:Cs-diag}
    C_t(x,x) =
    \frac{1}{2\pi} \log \ell_t + O(1),
    \qquad
    \sup_x \sum_y |\lCdot_t(x,y)|
    =
    O(\ell_t^2 \vartheta_t^2), 
  \end{equation}
  and the same estimates hold in the conservative case.
\end{lemma}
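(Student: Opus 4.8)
The plan is to reduce both estimates to classical facts about the continuous-time nearest-neighbour random walk on the discrete torus $\Lambda$ of side $L/\epsilon$. Writing $A = -\Delta_\Lambda + \epsilon^2 m^2$, one has $\uCdot_s = e^{-sA} = \vartheta_s^2\, p_s$ with $p_s := e^{s\Delta_\Lambda}$ the transition kernel of that walk: $p_s(x,y)\ge 0$, $p_s(x,y)=p_s(y,x)$, and $\sum_y p_s(x,y) = 1$ since $\Delta_\Lambda\mathbf 1 = 0$; moreover $s\mapsto p_s(x,x) = \|e^{s\Delta_\Lambda/2}\delta_x\|_2^2$ is nonincreasing with $p_0(x,x)=1$.

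The $\ell^1$-bound is then essentially an identity: by positivity of $\uCdot_t$, $\sup_x\sum_y|\lCdot_t(x,y)| = \ell_t^2\vartheta_t^2\,\sup_x\sum_y p_t(x,y) = \ell_t^2\vartheta_t^2$. In the conservative case $\uCdot_t^0(x,y) = \vartheta_t^2(p_t(x,y) - |\Lambda|^{-1})$, so the subtracted constant piece contributes an extra $\ell_t^2\vartheta_t^2$ and the bound survives with a factor $2$.

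For the diagonal estimate I would split $C_t(x,x) = \int_0^t \vartheta_s^2\, p_s(x,x)\,ds$ at $s=1$: the range $s\le 1$ contributes at most $\int_0^1 p_s(x,x)\,ds \le 1 = O(1)$. For $s\ge 1$, use the local central limit estimate $p_s^{\Z^2}(0,0) = \frac{1}{4\pi s} + O(s^{-2})$ on the infinite lattice, together with the periodisation identity $p_s(0,0) = \sum_{z\in(L/\epsilon)\Z^2} p_s^{\Z^2}(0,z)$; the contribution of the nonzero images, integrated against $\vartheta_s^2\,ds$, is $O(1)$, precisely because assumption \eqref{e:assLm} forces either the mass cutoff (the factor $\vartheta_s^2 = e^{-m^2\epsilon^2 s}$, active on scale $s\sim(\epsilon m)^{-2}$) or the truncation at $s=t$ to occur no later than the torus scale $s\sim(L/\epsilon)^2$. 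This gives $\int_1^t\vartheta_s^2 p_s(x,x)\,ds = \frac1{4\pi}\log\!\big((1\vee t)\wedge(\epsilon m)^{-2}\big) + O(1) = \frac1{2\pi}\log\ell_t + O(1)$ by the definition \eqref{e:elldef} of $\ell_t$. In the conservative case $C_t^0(x,x)$ differs from $C_t(x,x)$ by $-|\Lambda|^{-1}\int_0^t\vartheta_s^2\,ds$, which in absolute value is at most $|\Lambda|^{-1}\min\!\big(t,(\epsilon m)^{-2}\big) = \min\!\big(t\epsilon^2/L^2,(mL)^{-2}\big) = O(1)$ under \eqref{e:assLm}.

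The main obstacle -- indeed the only nonroutine ingredient -- is this torus-versus-$\Z^2$ comparison combined with the $\frac1{4\pi s}$ local central limit asymptotics; both are classical, most transparently via the Fourier representation of $p_s$ on $\Lambda$, and this is where both the constant $\frac1{2\pi}$ and the hypothesis \eqref{e:assLm} genuinely enter. Matching the three regimes $\ell_t\in\{1,\sqrt t,(\epsilon m)^{-1}\}$ of \eqref{e:elldef} with the corresponding truncation of the $s$-integral is then routine bookkeeping, and the sharper heat-kernel estimates needed later for $\beta>4\pi$ are not required here.
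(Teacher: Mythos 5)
Your proposal is correct and follows essentially the same route as the paper: the $\ell^1$ bound is the exact identity $\sum_y \uCdot_t(x,y)=\vartheta_t^2$ from positivity and stochasticity of the heat kernel, and the diagonal estimate combines the local CLT asymptotics $p_s^{\Z^2}(0,0)=\frac{1}{4\pi s}+O(s^{-2})$ with the observation that under \eqref{e:assLm} the torus (periodisation) corrections and the mass/time truncations contribute only $O(1)$, matching $\frac{1}{4\pi}\log\ell_t^2=\frac{1}{2\pi}\log\ell_t$. The treatment of the conservative case by explicitly subtracting the zero mode is also the same as in the paper.
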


\begin{proof}
  This follows from standard estimates on the heat kernel on $\Z^2$,
  see Appendix~\ref{app:dotC}.
\end{proof}

Further we define the \emph{scale dependent coupling constant} $\lz_t$ and its microscopic version $z_t$ by
\begin{equation} \label{e:ztdef}
  \lz_t = \ell_t^2z_t,
  \qquad
  z_t = e^{-\frac{\beta}{2} C_t(0,0)} z_0,
  \qquad
  \text{where }
  z_0 = \epsilon^{2-\beta/4\pi} z.
\end{equation}
For later purposes, we will now collect some basic properties of this definition.
By \eqref{e:Cs-diag} and the definitions of $\lz_t$ and $\ell_t$, uniformly in $t>0$,
\begin{equation} \label{e:ztbd}
  \lz_t
  = O_\beta(|z|(\epsilon\ell_t)^{2-\beta/4\pi})
  = O_\beta(|z| m^{-2+\beta/4\pi})
  .
\end{equation}
In the following, we write $x \lesssim y$ or $x=O_\beta(y)$
if $|x| \leq C_\beta |y|$ for a $\beta$-dependent constant $C_\beta$.
For any $\beta<8\pi$, by \eqref{e:ztbd} then
\begin{equation} \label{e:intzs}
  \int_0^t |\lz_s| \vartheta_s^2 \, \lds
  \lesssim
  |\lz_t|,
\end{equation}
as is straightforward to check from the definitions.
For use in the proof for $\beta>4\pi$, we also record the following estimates (again straightforward from the definitions):
for all positive integers $n$,
\begin{alignat}{2}
  \label{e:intzlbd}
  \int_0^t |z_s|^n \ell_s^{2(n-1)} \vartheta_s^2 \, \lds
  &\lesssim \frac{1}{n} |z_t|^n (C_\beta \ell_t^2)^{n-1}
  &\qquad&\text{for $\beta<8\pi(1-1/n)$,} 
  \\
  \label{e:intzlbd-improved}
  \int_0^t |z_s|^n \ell_s^{2(n-1)} \ell_s^{\beta/4\pi} \vartheta_s^2 \, \lds
  &\lesssim \frac{1}{n} |z_t|^n (C_\beta \ell_t^2)^{n-1} \ell_t^{\beta/4\pi}
  &\qquad&\text{for $\beta<8\pi$.}
\end{alignat}

\subsection{Fourier representation}

To estimate the Hessian of the renormalised potential $V_t$,
we use the Brydges--Kennedy approach \cite{MR914427}.
Namely, for any function $V: \R^\Lambda \to \R$ that is $\frac{2\pi}{ \sqrt{\beta}}$-periodic in each variable,
we will write its Fourier series (assuming it converges absolutely) as
\begin{equation} \label{e:V-fourier}
  V(\varphi) = \sum_{n=0}^\infty V^{(n)}(\varphi),
  \qquad
  V^{(n)}(\varphi) = \frac{1}{n!} \sum_{\xi_1,\dots,\xi_n} \tilde V^{(n)}(\xi_1,\dots, \xi_n) e^{i \sqrt{\beta}\sum_{k=1}^n\varphi_{x_k}\sigma_k}
\end{equation}
where $\tilde V^{(n)}: (\Lambda \times \{\pm 1\})^n \to \R$ and
\begin{equation}
  \xi_i=(x_i,\sigma_i) \in \Lambda \times \{\pm 1\}.
\end{equation}
We think of $\xi_i$ as a particle with position $x_i$ and charge $\sigma_i$.
Since the index $n$ is determined from the number of arguments of $\tilde V^{(n)}$, we will often omit
it and write $\tilde V(\xi_1,\dots,\xi_n) = \tilde V^{(n)}(\xi_1,\dots,\xi_n)$.
The representation \eqref{e:V-fourier} is \emph{not} manifestly unique without further conditions,
but in the relevant cases we will in fact construct coefficients $\tilde V(\xi_1,\dots,\xi_n)$ such that \eqref{e:V-fourier} holds.

The initial potential $V_0$ of the sine-Gordon model corresponds to
\begin{equation} \label{e:V0}
  \tilde V_0(\emptyset) = 0,
  \qquad
  \tilde V_0(\xi_1) = z_0,
  \qquad
  \tilde V_0(\xi_1,\dots,\xi_n) = 0 \quad (n>1).
\end{equation}
Set
\begin{equation}
  \dot u_s(\xi_i,\xi_j) = \beta \dot C_s(x_i,x_j)\sigma_i\sigma_j,
  \qquad
  \ludot_s(\xi_i,\xi_j)
  = \ell_s^2 \dot u_s(\xi_i,\xi_j)
  = \beta \lCdot_s(x_i,x_j)\sigma_i\sigma_j
\end{equation}
and
\begin{equation} \label{e:Wdef}
  \dot W_s(\xi_1,\dots,\xi_n) = \frac12 \sum_{k,l \in [n]} \dot u_s(\xi_k,\xi_l),
\end{equation}
where $[n] = \{1,\dots,n\}$.
We define $u_s$ and $W_s$ analogously by replacing $\dot C_s$ by $C_s$.
For later use, we note that $W_t-W_s \geq 0$ holds for all arguments by positive definiteness of $\dot C_s$.

Then in terms of the Fourier representation \eqref{e:V-fourier}, the two terms on the right-hand side of the Polchinski equation
\eqref{e:polchinski} are represented by
\begin{align} \label{e:DeltaV-fourier}
  \frac12 \widetilde{ (\Delta_{\dot C_s} V)}(\xi_1,\dots,\xi_n)
  &= - \frac12 \sum_{i,j \in [n]} \dot u_s(\xi_i,\xi_j) \tilde V(\xi_1,\dots,\xi_n)
    \nnb
  &= - \dot W_s(\xi_1,\dots,\xi_n)  \tilde V(\xi_1,\dots,\xi_n)
    \intertext{and}
    \label{e:gradV2-fourier}
  \frac12 \widetilde  {(\nabla V,\nabla V)}_{\dot C_s} (\xi_1,\dots,\xi_n)
  &= - \frac12 \sum_{I_1 \dot\cup I_2= [n]} \tilde V(\xi_{I_1})\tilde V(\xi_{I_2})  \sum_{i\in I_1, j \in I_2} \dot u_s(\xi_i,\xi_j).
\end{align}
The sum over $I_1 \dot\cup I_2=[n]$ is over all nonempty disjoint subsets $I_1$ and $I_2$ of $[n]$ with $I_1\cup I_2=[n]$.
Moreover, given $\xi_1,\dots,\xi_n$ and $I = \{i_1,\dots,i_k\} \subset [n]$ we denote by
$\xi_I$ the vector $(\xi_{i_1},\dots,\xi_{i_k})$. 

Indeed, \eqref{e:DeltaV-fourier} is straightforward to verify
in the sense that if $V$ is given by \eqref{e:V-fourier} and $\widetilde{\Delta_{\dot C_s} V}$
by \eqref{e:DeltaV-fourier} then
\begin{equation}
  \Delta_{\dot C_s} V(\varphi) = \sum_n \frac{1}{n!} \sum_{\xi_1,\dots,\xi_n} \widetilde{ (\Delta_{\dot C_s} V)}(\xi_1,\dots,\xi_n)
  e^{i\sqrt{\beta}\sum_{k=1}^{n} \varphi_{x_k} \sigma_k}.
\end{equation}
To see \eqref{e:gradV2-fourier}, note that 
differentiating \eqref{e:V-fourier} gives
\begin{equation}
  \ddp{}{\varphi_x} V^{(p)}(\varphi) = \frac{1}{p!} \sum_{\xi_1,\dots,\xi_p} \tilde V(\xi_1,\dots,\xi_p) \sum_{k=1}^p i\sqrt{\beta}\sigma_k 1_{x=x_k} e^{i\sqrt{\beta}\sum_{k=1}^p \varphi_{x_k} \sigma_k}
\end{equation}
and thus
\begin{multline}
  (\nabla V^{(p)}, \nabla V^{(q)})_{\dot C_s}(\varphi)
  = \frac{-1}{p!q!} \sum_{\xi_1, \dots, \xi_{p+q}} \tilde V(\xi_1,\dots,\xi_p)\tilde V(\xi_{p+1},\dots,\xi_{p+q})\\
  \sum_{i=1}^p \sum_{j=p+1}^{p+q} \dot u_s(\xi_i,\xi_j)
  e^{i\sqrt{\beta}\sum_{k=1}^{p+q} \varphi_{x_k} \sigma_k}.
\end{multline}
Therefore taking the sum over $p$ and $q$,
using that the number partitions of $[n]$ into two subsets with $p$ and $q=n-p$ elements is $n!/(p!q!)$
and that $\tilde V$ is symmetric in its arguments, we find
\begin{equation}
  (\nabla V, \nabla V)_{\dot C_s}(\varphi)
  =
  \sum_n \frac{1}{n!} \sum_{\xi_1,\dots,\xi_n} \widetilde  {(\nabla V,\nabla V)}_{\dot C_s} (\xi_1,\dots,\xi_n)
  e^{i\sqrt{\beta}\sum_{k=1}^{n} \varphi_{x_k} \sigma_k}
\end{equation}
if $\widetilde {(\nabla V,\nabla V)}_{\dot C_s}$ is given by \eqref{e:gradV2-fourier}.

By \eqref{e:DeltaV-fourier}-\eqref{e:gradV2-fourier} and the Duhamel principle,
the Polchinski equation has the following formulation as an integral equation:
\begin{multline} \label{e:polchinski-fourier-duhamel}
  \tilde V_t(\xi_1,\dots,\xi_n) =
  e^{-W_t(\xi_1,\dots,\xi_n)} \tilde V_0(\xi_1,\dots,\xi_n)
  \\ + \frac12 \int_0^t ds \, \sum_{I_1 \dot\cup I_2= [n]}
  \sum_{i\in I_1,j\in I_2}\dot u_s(\xi_i,\xi_j)
  \tilde V_s(\xi_{I_1}) \tilde V_s(\xi_{I_2}) 
  e^{-(W_t(\xi_1,\dots,\xi_n)-W_s(\xi_1,\dots,\xi_n))}.
\end{multline}
For $n\leq 1$, the unique solution to \eqref{e:polchinski-fourier-duhamel} is simply
\begin{equation}
  \tilde V_t(\emptyset) = \tilde V_0(\emptyset) = 0,
  \qquad
  \label{e:V1}
  \tilde V_t(\xi_1) = e^{-\frac12 u_t(\xi_1,\xi_1)} \tilde V_0(\xi_1) = z_t,
\end{equation}
with $z_t$ defined in \eqref{e:ztdef}.
For $n>1$, $\tilde V_t(\xi_1,\dots,\xi_n)$ is then determined explicitly
by \eqref{e:polchinski-fourier-duhamel}
in terms of $\tilde V_s(\xi_1,\dots,\xi_k)$, $k<n$.
Hence by induction, \eqref{e:polchinski-fourier-duhamel}
has a unique solution for any $n$ and $t$.
This is summarised in the following lemma along with a uniqueness property.

\begin{lemma} \label{lem:polchinki-integral}
  The integral equation \eqref{e:polchinski-fourier-duhamel} has a unique
  solution $\tilde V$ for all $n$ and $t$.
  Moreover, if $V_t$ defined in terms of $\tilde V_t$ by \eqref{e:V-fourier} converges absolutely,
  locally uniformly in $t>0$, then $V_t$ is equal to \eqref{e:V-def},
  the convolution  solution of the Polchinski equation.
\end{lemma}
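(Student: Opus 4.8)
The statement has two parts, which I would treat separately: first the well-posedness (existence and uniqueness) of \eqref{e:polchinski-fourier-duhamel}, which is a bookkeeping induction, and then, assuming the Fourier series converges, the identification of $V_t$ with the convolution solution \eqref{e:V-def} via uniqueness of the heat equation.

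\emph{Well-posedness.} I would induct on the number $n$ of arguments. For $n\leq 1$ equation \eqref{e:polchinski-fourier-duhamel} is solved explicitly and uniquely by \eqref{e:V1}. For the inductive step, suppose $\tilde V_s(\xi_I)$ has been determined uniquely, and is continuous in $s\geq 0$, for all index sets $I$ with $|I|<n$. On the right-hand side of \eqref{e:polchinski-fourier-duhamel} only such strictly lower-order coefficients occur (both $I_1$ and $I_2$ are nonempty), so it defines $\tilde V_t(\xi_1,\dots,\xi_n)$ as an explicit expression; the $s$-integrand is a finite sum of products of the continuous, hence on $[0,t]$ bounded, maps $s\mapsto\tilde V_s(\xi_{I_1})\tilde V_s(\xi_{I_2})$, the coefficients $\dot u_s(\xi_i,\xi_j)=\beta\dot C_s(x_i,x_j)\sigma_i\sigma_j$ (bounded on $[0,t]$), and the factor $e^{-(W_t-W_s)}\leq 1$ (using $W_t-W_s\geq 0$, noted after \eqref{e:Wdef}), so the integral is finite and, by dominated convergence, $t\mapsto\tilde V_t(\xi_1,\dots,\xi_n)$ is continuous. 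This closes the induction and produces simultaneously existence and uniqueness of $\tilde V$ for all $n$ and $t$.

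\emph{Identification.} Assume now that $V_t=\sum_n V_t^{(n)}$ defined by \eqref{e:V-fourier} converges absolutely, locally uniformly in $t>0$. Differentiating \eqref{e:polchinski-fourier-duhamel} in $t$ (the only $t$-dependence being through $W_t$ and the upper limit of integration) shows that each coefficient satisfies $\partial_t\tilde V_t(\xi_1,\dots,\xi_n)=-\dot W_t(\xi_1,\dots,\xi_n)\tilde V_t(\xi_1,\dots,\xi_n)+\frac12\sum_{I_1\dot\cup I_2=[n]}\sum_{i\in I_1,\,j\in I_2}\dot u_t(\xi_i,\xi_j)\tilde V_t(\xi_{I_1})\tilde V_t(\xi_{I_2})$, which by \eqref{e:DeltaV-fourier}--\eqref{e:gradV2-fourier} is exactly the Fourier transform of the Polchinski equation \eqref{e:polchinski}. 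Summing over $n$ and using the absolute convergence to interchange the sum with $\partial_t$, with $\frac12\Delta_{\dot C_t}$, and with the quadratic map $F\mapsto\frac12(\nabla F)^2_{\dot C_t}$ (the last being the Cauchy product of the absolutely convergent series for $\nabla V_t$ with itself, reorganised according to the disjoint decompositions $I_1\dot\cup I_2=[n]$), one concludes that $V_t$ solves \eqref{e:polchinski} for $t>0$, with $V_t\to V_0$ as $t\downarrow 0$ since $W_t\to 0$ and the Duhamel integral vanishes. Then $Z_t:=e^{-V_t}$ solves the linear heat equation $\partial_t Z_t=\frac12\Delta_{\dot C_t}Z_t$ with $Z_0=e^{-V_0}$, exactly as in Remark~\ref{rk:uniqueness} and in the proof of Proposition~\ref{prop:polchinski}. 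Since $V_0$ is bounded, $Z_0$ and $Z_t$ are bounded above and below, so this equation has a unique bounded solution, namely the Gaussian convolution $\EE_{C_t}(e^{-V_0(\cdot+\zeta)})$; hence $V_t$ equals the convolution solution \eqref{e:V-def}.

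\emph{Main obstacle.} The one genuinely delicate point is the interchange in the identification step: passing $\partial_t$ and the second-order, time-dependent operators $\Delta_{\dot C_t}$ and $(\nabla\cdot)^2_{\dot C_t}$ through the infinite Fourier sum needs more than bare convergence of $\sum_n V_t^{(n)}$ — it requires control of $\sum_{\xi_1,\dots,\xi_n}|\tilde V_t^{(n)}(\xi_1,\dots,\xi_n)|$ and of its $t$-derivative strong enough to dominate the differentiated series. Such control follows from \eqref{e:polchinski-fourier-duhamel} by a Gronwall/bootstrap estimate combined with the assumed local uniform convergence, and in the sine-Gordon application it is precisely the content of the quantitative bounds on $\|\tilde V_t^{(n)}\|$ established in the later sections. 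Alternatively, one can avoid differentiating the series at all by working throughout with the mild (Duhamel) formulation of the heat equation for $Z_t$ and quoting its uniqueness directly.
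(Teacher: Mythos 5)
Your proof is correct and follows essentially the same route as the paper: induction on $n$ for existence and uniqueness of the coefficients, then (under absolute convergence) passage from the Duhamel form to the Polchinski equation and identification via uniqueness of bounded solutions of the heat equation for $Z_t=e^{-V_t}$, exactly as in Remark~\ref{rk:uniqueness}. The delicate point you flag is real — the paper's proof quietly strengthens the hypothesis to absolute convergence of the series \emph{and its derivatives}, which in the application is supplied by the quantitative bounds on $\nVt{n}$ — so your discussion of that interchange matches what the paper actually uses.
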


\begin{proof}
We have already shown that \eqref{e:polchinski-fourier-duhamel} has a unique solution.
For coefficients $\tilde V_t$ such that \eqref{e:V-fourier}
and its derivatives converge absolutely,
the function $V_t$ defined by \eqref{e:V-fourier} is smooth.
Moreover, for smooth $V_t$, the integral equation
\eqref{e:polchinski-fourier-duhamel} implies
the Polchinski equation \eqref{e:polchinski}.
Uniqueness of bounded solutions to the Polchinski equation by Remark~\ref{rk:uniqueness}
then implies that $V_t$
coincides with the convolution solution of the Polchinski equation.
\end{proof}

\subsection{Up to the first threshold: proof of Propositions~\ref{prop:SG-HeVC}-\ref{prop:SG-HeVC-kawasaki} for $\beta<4\pi$ assuming \eqref{e:small}}
\label{sec:BK}

The following proposition, due to \cite{MR914427},
gives good bounds when $\beta<4\pi$.
For completeness, we reproduce their argument here in our set-up and notation.
(See also \cite{MR630333,MR641914,MR833024,MR733471,MR4010781} for related results.)
We will then use the result to derive Proposition~\ref{prop:SG-HeVC}
in the case $\beta<4\pi$.
Let
\begin{equation}
  \|\dot u_s\| = \sup_{\xi_1} \sum_{\xi_2} |\dot u_s(\xi_1,\xi_2)|
\end{equation}
and
\begin{equation}
  \nV{1} = \sup_{\xi_1} |\tilde V(\xi_1)|,
  \qquad
  \nV{n} = \sup_{\xi_1} \sum_{\xi_2,\dots,\xi_n} |\tilde V(\xi_1,\dots,\xi_n)| \quad (n>1).
\end{equation}

\begin{proposition} \label{prop:Vtbd}
For all $n \geq 1$, the solution to \eqref{e:polchinski-fourier-duhamel} satisfies
\begin{equation} \label{e:Vtbd}
  \nVt{n}
  \leq n^{n-2} |z_t|^n M_t^{n-1},
  \quad
  \text{where}
  \quad
  M_t = \int_0^t ds \| \dot u_s \| e^{\beta (C_t-C_s)(0,0)}
  ,
\end{equation}
with $z_t$ defined in \eqref{e:ztdef}.
In particular, if $z_tM_t < 1/e$, the Fourier series for $V_t$ converges
and $V_t$ coincides with the convolution solution to the Polchinski equation.
The analogous statements hold in the conservative case.
\end{proposition}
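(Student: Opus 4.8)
The plan is to prove the bound \eqref{e:Vtbd} by induction on $n$ using the Duhamel representation \eqref{e:polchinski-fourier-duhamel}, with the combinatorial structure of the nonlinearity producing exactly the tree factor $n^{n-2}$ (the number of labelled trees on $n$ vertices, by Cayley's formula). For $n=1$ the claim is the identity $\nVt{1} = |z_t|$ from \eqref{e:V1}, which matches $n^{n-2}|z_t|^n M_t^{n-1} = |z_t|$ since $1^{-1}=1$ and $M_t^0=1$. For the inductive step, fix $n\geq 2$ and apply $\sup_{\xi_1}\sum_{\xi_2,\dots,\xi_n}$ to \eqref{e:polchinski-fourier-duhamel}. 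Since $\tilde V_0$ vanishes for $n\geq 2$, only the integral term survives. Using $W_t - W_s \geq 0$ on all the ``internal'' couplings and keeping only the diagonal self-energy factors, one estimates $e^{-(W_t(\xi_1,\dots,\xi_n)-W_s(\xi_1,\dots,\xi_n))} \leq e^{\beta(C_t-C_s)(0,0)\cdot(\text{something})}$; more precisely one pulls out of each $\tilde V_s(\xi_{I_j})$ the factor $e^{-W_s(\xi_{I_j})}$ hidden in the recursion so that the product $\tilde V_s(\xi_{I_1})\tilde V_s(\xi_{I_2})$ combined with the remaining exponential reconstitutes the weights at scale $t$; this is the standard Brydges--Kennedy bookkeeping and is what forces the precise form of $M_t$.

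The key steps, in order, are: (i) split $[n] = I_1 \dot\cup I_2$ with $|I_1| = k$, $|I_2| = n-k$; there are $\binom{n}{k}$ such splits with $\xi_1$ in a prescribed part, but by symmetry it suffices to track $\xi_1 \in I_1$, contributing a factor that will be absorbed into the Cayley count. (ii) For the bridging sum $\sum_{i\in I_1, j\in I_2}\dot u_s(\xi_i,\xi_j)$, bound $|\dot u_s(\xi_i,\xi_j)|$ and sum the position of \emph{one} free particle against $\|\dot u_s\|$, using the $\sup$-$\sum$ norm structure: the particle in $I_2$ adjacent to the bridge is summed out against $\|\dot u_s\|$, and the remaining particles of $I_2$ are controlled by $\nVs{n-k}$ rooted at that adjacent particle. (iii) Apply the inductive hypothesis $\nVs{k}\leq k^{k-2}|z_s|^k M_s^{k-1}$ and $\nVs{n-k}\leq (n-k)^{n-k-2}|z_s|^{n-k}M_s^{n-k-1}$, and use $M_s \leq M_t$ together with $|z_s|^k|z_s|^{n-k} e^{\beta(C_t-C_s)(0,0)} = |z_t|^n$ after the exponential is correctly distributed (here one uses $z_s = e^{-\frac{\beta}{2}C_s(0,0)}z_0$ and the $n$-particle self-energy exponent $\beta C_s(0,0)\cdot\frac{n(\dots)}{\dots}$ — the exact accounting is routine but must be done carefully). (iv) Collect the $s$-integral $\int_0^t \|\dot u_s\| e^{\beta(C_t-C_s)(0,0)}\,ds = M_t$, which supplies the missing power of $M_t$ raising $M_s^{k-1}M_s^{n-k-1}$ to $M_t^{n-1}$. (v) Finally, verify the combinatorial identity $\frac12\sum_{k=1}^{n-1}\binom{n}{k}k^{k-1}(n-k)^{n-k-1} = n^{n-1}$ (a classical Abel-type identity), so that after dividing by the overall normalisation the coefficient is $n^{n-2}$.

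The main obstacle is step (iii)–(v): getting the exponential weights to telescope so that the scale-$s$ quantities $|z_s|, M_s$ are replaced cleanly by scale-$t$ quantities $|z_t|, M_t$, and simultaneously matching the combinatorial prefactor to Cayley's formula. The weight bookkeeping is delicate because $W_t - W_s$ for the full $n$-particle configuration is \emph{not} simply the sum of the two sub-configuration weights plus the bridge — there are cross terms — so one must argue that the cross terms have a favourable sign (they do, by positive-definiteness of $\dot C_s$, which is exactly the remark ``$W_t - W_s \geq 0$'' recorded before \eqref{e:DeltaV-fourier}) and can be dropped, while the self-energy cross terms are precisely what is needed to turn $|z_s|^n$ into $|z_t|^n$. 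The combinatorial identity in step (v) is the Abel binomial identity and can be cited or proved by a short generating-function argument. The final sentence of the proposition — convergence of the Fourier series when $|z_t| M_t < 1/e$ — is then immediate from $\sum_n n^{n-2}(|z_t|M_t)^n/? $ via Stirling ($n^{n-2} \sim n! \, e^n/n^{5/2}\sqrt{2\pi}$, so the series behaves like $\sum (e|z_t|M_t)^n$), together with Lemma~\ref{lem:polchinki-integral} to identify the resulting $V_t$ with the convolution solution; the conservative case is identical since only the bounds of Lemma~\ref{lem:dotC}, which hold there too, are used.
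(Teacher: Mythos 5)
Your overall strategy is exactly the one the paper uses (it is the Brydges--Kennedy induction: Duhamel formula, drop $e^{-(W_t-W_s)}\le 1$, submultiplicativity of the $\sup$--$\sum$ norm, tree counting). However, the two steps you defer as ``routine but must be done carefully'' are precisely where your accounting does not close, and as written they contain genuine errors.

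First, step (iv) fails as stated. After applying the inductive hypothesis you have $|z_s|^n M_s^{n-2}$ inside the $s$-integral, and converting to scale $t$ via $z_s=e^{-\frac\beta2 C_s(0,0)}z_0$ gives $|z_s|^n=|z_t|^n e^{\frac{n\beta}{2}(C_t-C_s)(0,0)}$ --- note the exponent $\tfrac n2\beta$, not $\beta$ as in your step (iii). You cannot then bound $M_s\le M_t$, pull out $M_t^{n-2}$, and identify the remaining integral with $M_t$: the leftover integrand is $\|\dot u_s\|e^{\frac{n\beta}{2}(C_t-C_s)(0,0)}$, which for $n>2$ is \emph{not} dominated by the integrand $\|\dot u_s\|e^{\beta(C_t-C_s)(0,0)}$ defining $M_t$. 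The correct device is to bound $e^{\frac n2\beta(C_t-C_s)(0,0)}\le e^{(n-1)\beta(C_t-C_s)(0,0)}$ (valid for $n\ge2$) and then use the exact change-of-variables identity
\begin{equation*}
(n-1)\int_0^t g(s)\Bigl(\int_0^s g(s')\,ds'\Bigr)^{n-2}ds=\Bigl(\int_0^t g(s)\,ds\Bigr)^{n-1},
\qquad g(s)=\|\dot u_s\|e^{-\beta C_s(0,0)},
\end{equation*}
which produces $M_t^{n-1}$ on the nose. Second, your Abel identity in step (v) is wrong: $\frac12\sum_{k=1}^{n-1}\binom nk k^{k-1}(n-k)^{n-k-1}=(n-1)n^{n-2}$, not $n^{n-1}$ (the left side counts labelled trees on $[n]$ with a marked edge), and there is no ``overall normalisation'' to divide by afterwards. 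The extra factor $n-1$ is not discarded --- it is exactly the prefactor consumed by the integral identity above. With your constants the induction would not close at $n^{n-2}$. The base case, the use of $W_t-W_s\ge0$, the convergence estimate via $n^n/n!\le e^n$, the appeal to Lemma~\ref{lem:polchinki-integral}, and the remark on the conservative case are all fine.
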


\begin{proof}
For $n=1$, the bound \eqref{e:Vtbd} is obvious from \eqref{e:V1}.
To prove the bounds \eqref{e:Vtbd} for $n>1$, we use induction.
Note that the first term on the right-hand side of \eqref{e:polchinski-fourier-duhamel}
does not contribute for $n>1$ since then $\tilde V_0^{(n)} = 0$ by \eqref{e:V0}.
In the second term, we drop the exponential inside the integral 
(as $W_t - W_s \geq 0$) to obtain
\begin{equation} \label{e:polchinski-fourier-duhamel-bis}
  |\tilde V_t(\xi_1,\dots,\xi_n)| \leq
  \frac12 \int_0^t ds \, \sum_{I_1 \dot\cup I_2= [n]}
  \sum_{i\in I_1,j\in I_2} |\dot u_s(\xi_i,\xi_j)
  \tilde V_s(\xi_{I_1}) \, \tilde V_s(\xi_{I_2})|
  .
\end{equation}
Note that if $|I_1|=n-k$ and $|I_2|=k$ then
\begin{align}
  \sup_{\xi_1} \sum_{\xi_2,\dots,\xi_n}  |\dot u_s(\xi_i,\xi_j)
  \tilde V_s(\xi_{I_1}) \tilde V_s(\xi_{I_2})| \leq \|\dot u_s\| \nVs{n-k} \nVs{k} 
  .
\end{align}
For example,
\begin{align}
  &\sup_{\xi_1} \sum_{\xi_2,\xi_3,\xi_4} |\dot u_s(\xi_1,\xi_3) \tilde V_s(\xi_1,\xi_2) \tilde V_s(\xi_3,\xi_4)|
  \nnb
  &\leq \sup_{\xi_1} \sum_{\xi_3} |\dot u_s(\xi_1,\xi_3)| \sup_{\xi_1} \sum_{\xi_2} |\tilde V_s(\xi_1,\xi_2)| \sup_{\xi_3} \sum_{\xi_4} |\tilde V_s(\xi_3,\xi_4)|
    \leq \|\dot u_s\| \nVs{2}^2 
    .
\end{align}
Assuming the bound \eqref{e:Vtbd} for integers less than $n$, therefore
\begin{align}
  \nVt{n}
  &\leq \frac12 \int_0^t ds \,  \|\dot u_s\| \sum_{k=1}^{n-1} \binom{n}{k} k(n-k) 
    \nVs{n-k}\,\nVs{k} 
  \nnb
  &\leq
    \frac12 \int_0^t ds \, \|\dot u_s\| \sum_{k=1}^{n-1} \binom{n}{k}
    |z_s|^n M_s^{n-2} (n-k)^{n-k-1}k^{k-1}
    .
\end{align}
Using that $\sum_{k=1}^{n-1} \binom{n}{k} k^{k-1} (n-k)^{n-k-1} = 2(n-1)n^{n-2}$ and $n/2 \leq n-1$ for $n\geq 2$,
\begin{align}
  \nVt{n} 
  &\leq
    n^{n-2} |z_t|^n (n-1) \int_0^t ds \, \|\dot u_s\| e^{\frac{n}{2} \beta (C_t-C_s)(0,0)} M_s^{n-2} 
  \nnb
  &\leq
    n^{n-2} |z_t|^n  (n-1) \int_0^t ds \, \|\dot u_s\| e^{(n-1)\beta (C_t-C_s)(0,0)} M_s^{n-2} 
  =
    n^{n-2} |z_t|^n M_t^{n-1}
    .
\end{align}
For $n>2$, the last equality follows from the following change of variables,
\begin{equation}
  (n-1) \int_0^t ds \, g(s) \pa{\int_0^s ds' \, g(s')}^{n-2}
  = \pa{\int_0^t ds \, g(s)}^{n-1}
  ,
\end{equation}
applied with $g(s)=\|\dot u_s\|e^{-\beta C_s(0,0)}$.
Indeed,
\begin{multline}
  (n-1) \int_0^t ds \, \|\dot u_s\|  e^{\beta (n-1)(C_t-C_s)(0,0)}M_s^{n-2} 
  \\=
    (n-1) e^{\beta (n-1)C_t(0,0)} \int_0^t ds \, \|\dot u_s\|  e^{-\beta C_s(0,0)} \pa{\int_0^s ds' \, \|\dot u_{s'}\| e^{-\beta C_{s'}(0,0)}}^{n-2} 
    = M_t^{n-1}
    .
\end{multline}

Finally, using the bounds \eqref{e:Vtbd} for 
$\tilde V_t(\xi_1,\dots,\xi_n)$ and the assumption $\sup_t z_tM_t < 1/e$, 
the series \eqref{e:V-fourier} for $V_t(\varphi)$ converges absolutely since
(using $n^n/n! \leq e^n$),
\begin{equation} \label{e:Vconverges}
  \frac{|V_t(\varphi)|}{|\Lambda|}
  \leq
  \sum_{n=1}^\infty \frac{1}{n!} n^{n-2} |z_t|^n M_t^{n-1}
  \leq \sum_{n=1}^\infty e^n |z_t|^n M_t^{n-1}
  = \frac{e|z_t|}{1-e|z_t|M_t}
  \leq C <\infty,
\end{equation}
and analogously for derivatives.
Hence $V$ solves the Polchinski equation \eqref{e:polchinski}
by Lemma~\ref{lem:polchinki-integral}.
\end{proof}

Using the conclusion of the last proposition
together with the basic estimates for $\lCdot_s$ given
in Lemma~\ref{lem:dotC}, it is straightforward to
complete the proof of Propositions~\ref{prop:SG-HeVC}-\ref{prop:SG-HeVC-kawasaki} for $\beta<4\pi$.

\begin{proof}[Proof of Propositions~\ref{prop:SG-HeVC}-\ref{prop:SG-HeVC-kawasaki} for $\beta<4\pi$ assuming \eqref{e:small}]
Since the proofs of the two propositions are identical we only discuss Proposition~\ref{prop:SG-HeVC}.
From \eqref{e:Cs-diag},
\begin{equation}
 \| \dot u_s \| \leq \beta  \vartheta_s^2 \,\sup_x \sum_y |\dot C_s(x,y)|  \leq O_\beta( \vartheta_s^2).
\end{equation}
For $\beta<4\pi$, the definition of $M_t$ in \eqref{e:Vtbd}, 
the definition of $\ell_t$ in \eqref{e:elldef},
and \eqref{e:Cs-diag} imply
\begin{equation} \label{e:Mtbd}
  M_t \leq C_\beta \ell_t^{\beta/(2\pi)}  \int_0^t ds \, \vartheta_s^2 \, \ell_s^{- \beta/(2\pi)}
  =O_\beta(\ell_t^2).
\end{equation}
In this proof, the condition $\beta<4\pi$ is only needed in order to achieve the scaling $\ell_t^2$ in the previous upper bound.
By \eqref{e:ztdef}-\eqref{e:ztbd} therefore, using in the last inequality that $|z|   m^{-2+\beta/4\pi}$ is sufficiently small,
\begin{equation}
  |z_t| M_t
  = O_\beta(|\lz_t|)
  = O_\beta(|z| m^{-2+\beta/4\pi})
  \leq \frac{1}{2e}
  .
\end{equation}
Let
\begin{equation}
  \|\He V_t(\varphi)\| =
  \sup_x \sum_y |\ddp{^2}{\varphi_x\partial\varphi_y} V_t(\varphi)|.
\end{equation}
From \eqref{e:V-fourier} together with \eqref{e:Vtbd}, \eqref{e:Mtbd}, and with $n^n/n! \leq e^{n}$ we obtain
\begin{equation} \label{e:HeVsformbd}
  \|\He V_t(\varphi)\|
  \leq
\beta  \sum_{n=1}^\infty \frac{1}{n!}   n^2 n^{n-2} |z_t|^n M_t^{n-1}
  \leq   \beta \sum_{n=1}^\infty e^n |z_t|^n M_t^{n-1}
  = \frac{   \beta e|z_t|}{1-e|z_t|M_t}
  \leq 2    \beta e |z_t|.
\end{equation}
Since $|(f,\He V_t(\varphi)f)| \leq \|\He V_t(\varphi)\||f|_2^2$
and $|Q_tf|_2 \leq \vartheta_t |f|_2$, we obtain
\begin{equation} \label{e:HessCbd}
  |(Q_tf,\He V_t(\varphi) Q_tf)| \leq O_\beta(|z_t| \vartheta_t^2) |f|_2^2. 
\end{equation}
In the notation of Theorem~\ref{thm:LSI-heat} we thus have that $\dot\mu_t \geq - O_\beta(|z_t| \vartheta_t^2)$.
Hence, using the bounds for $z_t$ from \eqref{e:intzs} and \eqref{e:ztbd}, for all $t\geq 0$,
\begin{equation}
  \mu_t
  \geq - \int_0^t O_\beta(|\lz_s|\vartheta_s^2) \, \lds
  \geq - O_\beta(|\lz_t|)
  \geq - O_\beta(|z| m^{-2+\beta/4\pi})
  \equiv - \mu^*.
\end{equation}

Finally, the ergodicity assumption \eqref{e:ergodicity} follows from
the weak-* convergence $\nu_t\to\nu_\infty \equiv \delta_0$ and
$\PP_{0,t}F(\varphi) \to \PP_{0,\infty} F(\varphi)$ uniformly in $\varphi$.
Indeed, $\nu_t\to\nu_\infty$ holds since
the Gaussian measure covariance $C_\infty-C_t$ converges to $\delta_0$
and $V_t(\varphi)$ is bounded (uniformly in $\varphi$ and $t$).
The uniform convergence $\PP_{0,t}F \to \PP_{0,\infty} F$ holds
since $V_t(\varphi) \to V_\infty(\varphi)$ and
$\EE_{C_s} e^{-V_0(\varphi+\zeta)} F(\varphi+\zeta) \to \EE_{C_\infty} e^{-V_0(\varphi+\zeta)} F(\varphi+\zeta)$,
both uniformly in $\varphi$,
where the last claim holds since the integrand is a bounded Lipschitz function.
\end{proof}

\subsection{Up to the second threshold:
  proof of Propositions~\ref{prop:SG-HeVC}-\ref{prop:SG-HeVC-kawasaki} for $\beta<6\pi$ assuming \eqref{e:small}}
\label{sec:6pi}

The remainder of Section~\ref{sec:SG} is devoted to extending the proof of
Proposition~\ref{prop:SG-HeVC} from $\beta<4\pi$ to $\beta<6\pi$.
For this, we will estimate the $n=2,3,4$ terms in \eqref{e:V-fourier} more carefully.

Indeed, for $n=2$,
a uniform bound on $\tilde V_t(\xi_1,\xi_2)$ as used for $\beta<4\pi$ is not true when $\beta \geq 4\pi$,
and we rely crucially on the smoothing effect of the heat kernel $Q_t$ in \eqref{e:assVt-heat} to obtain
the required bound stated in the following proposition.
(Note that this estimate is best expressed in terms of $\lQ_t$ and $\lz_t$ rather than $Q_t$ and $z_t$.)

\begin{proposition} \label{prop:Vtbd-thres1-2}
  Let $\beta<8\pi$ and assume \eqref{e:assLm}. Then
  \begin{equation} \label{e:V-2-bd}
    (\lQ_t f,\He V_t^{(2)}(\varphi) \lQ_t f)
    = O_\beta(|\lz_t|^2 \vartheta_t^2)|f|_2^2.
  \end{equation}
  The analogous statement holds in the conservative case.
\end{proposition}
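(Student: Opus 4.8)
The plan is to make the $n=2$ term of the Polchinski flow explicit and to estimate the quadratic form $(\lQ_t f,\He V_t^{(2)}(\varphi)\lQ_t f)$ directly, decomposing over the charges $\sigma_1,\sigma_2$ of the two particles; both the hypothesis $\beta<8\pi$ and the smoothing of $Q_t$ enter only in the ``dipole'' part of that decomposition.

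Since $\tilde V_0^{(2)}=0$ and $\tilde V_s(\xi)=z_s$ by \eqref{e:V0}--\eqref{e:V1}, the $n=2$ instance of \eqref{e:polchinski-fourier-duhamel} is the closed formula $\tilde V_t^{(2)}(\xi_1,\xi_2)=\int_0^t\dot u_s(\xi_1,\xi_2)\,z_s^2\,e^{-(W_t-W_s)(\xi_1,\xi_2)}\,ds$. With $(W_t-W_s)(\xi_1,\xi_2)=\beta(C_t-C_s)(0,0)+\sigma_1\sigma_2\beta(C_t-C_s)(x_1,x_2)$ and the identity $z_s^2 e^{-\beta(C_t-C_s)(0,0)}=z_t^2$ (immediate from \eqref{e:ztdef}), this is
\[
  \tilde V_t^{(2)}(\xi_1,\xi_2)=\beta z_t^2\,\sigma_1\sigma_2\int_0^t \dot C_s(x_1,x_2)\,e^{-\sigma_1\sigma_2\beta(C_t-C_s)(x_1,x_2)}\,ds.
\]
Differentiating \eqref{e:V-fourier} twice, with $g=Q_tf$ and $\lQ_t=\ell_t Q_t$,
\[
  (\lQ_t f,\He V_t^{(2)}(\varphi)\lQ_t f)=-\tfrac{\beta}{2}\,\ell_t^2\sum_{\xi_1,\xi_2}\tilde V_t^{(2)}(\xi_1,\xi_2)\,(\sigma_1 g_{x_1}+\sigma_2 g_{x_2})^2\,e^{i\sqrt\beta(\sigma_1\varphi_{x_1}+\sigma_2\varphi_{x_2})}.
\]
I split the charge sum into the \emph{aligned} part $\sigma_1\sigma_2=+1$, where $(\sigma_1 g_{x_1}+\sigma_2 g_{x_2})^2=(g_{x_1}+g_{x_2})^2$ and the weight is $e^{-\beta(C_t-C_s)(x_1,x_2)}\le 1$, and the \emph{dipole} part $\sigma_1\sigma_2=-1$, where it is $(g_{x_1}-g_{x_2})^2$, the sign from $\dot C_s$ survives, and the weight $e^{+\beta(C_t-C_s)(x_1,x_2)}$ grows and cannot be dropped. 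The aligned part is handled crudely: $(g_{x_1}+g_{x_2})^2\le 2(g_{x_1}^2+g_{x_2}^2)$, $\sum_y|\dot C_s(x,y)|=O(\vartheta_s^2)$ by Lemma~\ref{lem:dotC}, $|g|_2^2\le\vartheta_t^2|f|_2^2$, and $\int_0^t\vartheta_s^2\,ds=O(\ell_t^2)$, giving a contribution $O_\beta\bigl(\ell_t^2\cdot z_t^2\cdot\ell_t^2\vartheta_t^2\bigr)|f|_2^2=O_\beta(|\lz_t|^2\vartheta_t^2)|f|_2^2$ since $\lz_t=\ell_t^2 z_t$.

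The dipole part is the crux, and the only place where $\beta<8\pi$ and the regularity of $Q_t$ are used. Writing $K_s(x)=e^{\beta(C_t-C_s)(0,x)}$ and $H_s=\dot C_s K_s\ge 0$, I bound, by Fourier/Parseval using $1-\cos(k\cdot x)\le\tfrac12|k|^2|x|^2$ and $|k|^2\lesssim\widehat{(-\Delta_\Lambda)}(k)$,
\[
  \sum_{x_1,x_2}H_s(x_1-x_2)(g_{x_1}-g_{x_2})^2=O\bigl(\tilde B_s\,(g,-\Delta_\Lambda g)\bigr),\qquad \tilde B_s=\sum_x|x|^2 H_s(x),
\]
then use the heat kernel smoothing $(g,-\Delta_\Lambda g)=(f,e^{-tA}(-\Delta_\Lambda)f)=O(\vartheta_t^2\ell_t^{-2})|f|_2^2$, and estimate $\tilde B_s$ from the sharpened heat kernel bounds of Appendix~\ref{app:dotC} --- $\dot C_s(0,x)=O(\vartheta_s^2\ell_s^{-2}e^{-c|x|/\ell_s})$ and $(C_t-C_s)(0,x)=\tfrac1{2\pi}\log\frac{\ell_t}{|x|\vee\ell_s}+O(1)$ for $|x|\lesssim\ell_t$ --- which make the near-diagonal region dominate, $\tilde B_s=O_\beta\bigl(\vartheta_s^2\ell_s^2(\ell_t/\ell_s)^{\beta/2\pi}\bigr)$. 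Multiplying by $\beta^2\ell_t^2 z_t^2$ and integrating in $s$, the dipole part is
\[
  O_\beta\Bigl(z_t^2\vartheta_t^2|f|_2^2\,\ell_t^{\beta/2\pi}\int_0^t\vartheta_s^2\,\ell_s^{2-\beta/2\pi}\,ds\Bigr),
\]
and since $2-\beta/2\pi>-2$ precisely when $\beta<8\pi$, the scale integral is $O(\ell_t^{4-\beta/2\pi})$, so the contribution is $O_\beta(z_t^2\ell_t^4\vartheta_t^2)|f|_2^2=O_\beta(|\lz_t|^2\vartheta_t^2)|f|_2^2$, as required; for $\beta\ge 8\pi$ the scale integral is only $O(1)$ and this argument breaks, which is the origin of the restriction. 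The conservative case is identical, all heat kernel inputs above holding verbatim for $\dot C_s^0$ and $Q_s^0$ by Lemma~\ref{lem:dotC}. The main obstacle is thus this dipole estimate: controlling the weighted discrete Dirichlet form against a weight $e^{\beta(C_t-C_s)}$ that is logarithmically singular on the diagonal, which forces one to play the regularity of $g=Q_tf$ at scale $\ell_t$ off against that singularity --- precisely the smoothing effect emphasised in the introduction.
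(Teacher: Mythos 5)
Your proposal is correct and reaches the same bound, but the key ``dipole'' (neutral) estimate is obtained by a genuinely different route from the paper's. The paper first integrates the $n=2$ Duhamel formula in closed form, yielding $\tilde V_t(\xi_1,\xi_2)=-z_t^2(1-e^{-\beta\sigma_1\sigma_2 C_t(x_1,x_2)})$, and then bounds the neutral part via the purely real-space estimate \eqref{e:thresh1-Cbd2new} on the integrated kernel $U_t=e^{\beta C_t}-1$; that estimate is proved in Appendix~\ref{app:dotC} by factorising $|U_t(x,y)|(\lQ_t f(x)-\lQ_t f(y))^2 = A_{xy}B_{xy}^2$ with $A_{xy}$ a weighted copy of $U_t$ and $B_{xy}$ a damped difference quotient of $\lQ_t f$, followed by Schur-type summation. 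You instead keep the $s$-integral, pass to Fourier, and split the spatial sum as (second moment $\tilde B_s$ of $H_s$) $\times$ (discrete Dirichlet form $(Q_tf,-\Delta_\Lambda Q_tf)$), then use the spectral smoothing bound $\|e^{-tA}(-\Delta_\Lambda)\|_{\mathrm{op}}=O(\vartheta_t^2\ell_t^{-2})$. Both proofs use the same two inputs --- the logarithmic asymptotics of $C_t-C_s$ and the exponential localisation of $\dot C_s$ at scale $\ell_s$ --- and both see $\beta<8\pi$ appear through an $|x|^2$-moment summability (your $\tilde B_s$, the paper's $\sup_x\sum_y A_{xy}$). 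Your Fourier route is arguably conceptually cleaner, isolating the smoothing of $Q_t$ as a bare operator-norm estimate; the paper's real-space route requires no Plancherel identity and transfers with no change to the conservative case, whereas you should note explicitly that the Parseval step still works for $Q_t^0=e^{-tA/2}P$ because $P$ commutes with $-\Delta_\Lambda$ and is a contraction on $\ell^2$. Both approaches also agree on the aligned part, treated crudely. The only places where your write-up is lighter on detail than the argument requires are (i) the verification that $\tilde B_s = O_\beta(\vartheta_s^2\ell_s^2(\ell_t/\ell_s)^{\beta/2\pi})$ and (ii) the convergence of the final scale integral near $s=0$, but both are straightforward exactly as you sketch them.
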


For the terms $n>2$, the following proposition gives an analogue of Proposition~\ref{prop:Vtbd}
for $\beta<6\pi$.

\begin{proposition} \label{prop:Vtbd-thres1-no2}
  Let $\beta < 6\pi$ and assume \eqref{e:assLm}.
  Then there is $C_\beta<\infty$ such that for all $n\geq 3$,
  \begin{align} \label{e:Vtbd-thresh1}
    \nVt{n} \leq n^{n-2} |z_t|^{n} (C_\beta \ell_t^2)^{n-1}.
  \end{align}
  The analogous statement holds in the conservative case.
\end{proposition}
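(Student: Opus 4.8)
The plan is to prove \eqref{e:Vtbd-thresh1} by induction on $n\geq 3$ from the Duhamel integral equation \eqref{e:polchinski-fourier-duhamel}, following the tree-graph scheme behind Proposition~\ref{prop:Vtbd} but \emph{without discarding all of the exponential damping} $e^{-(W_t-W_s)}$, which for $\beta\geq 4\pi$ becomes essential. Since $\tilde V_0^{(n)}=0$ for $n\geq 2$, the inhomogeneous term in \eqref{e:polchinski-fourier-duhamel} drops out and $\tilde V_t^{(n)}$ is a scale integral over $s\in[0,t]$ of $\frac12\sum_{I_1\dot\cup I_2=[n]}\big(\sum_{i\in I_1,\,j\in I_2}\dot u_s(\xi_i,\xi_j)\big)\tilde V_s(\xi_{I_1})\tilde V_s(\xi_{I_2})e^{-(W_t-W_s)}$. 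For a factor with $\geq 3$ particles I would insert the inductive bound, for a one-particle factor the identity $\tilde V_s^{(1)}=z_s$ of \eqref{e:V1}; the sum over partitions then produces the combinatorial factor $2(n-1)n^{n-2}$ exactly as in the proof of Proposition~\ref{prop:Vtbd}, which after one scale integration yields the prefactor $n^{n-2}$ of \eqref{e:Vtbd-thresh1}. The real work is localised in the two-particle factors $\tilde V_s^{(2)}$ --- for which the crude bound $\|\tilde V_s^{(2)}\|=O_\beta(|z_s|^2\ell_s^{\beta/2\pi})$ is, when $\beta\geq 4\pi$, too weak to give the target power $\ell_t^2$ per particle --- and in making the remaining scale integral converge.

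For the $\tilde V_s^{(2)}$ factors I would use the explicit $n=2$ solution of \eqref{e:polchinski-fourier-duhamel}, $\tilde V_s^{(2)}(\xi_1,\xi_2)=\int_0^s dr\,\dot u_r(\xi_1,\xi_2)z_r^2 e^{-(W_s-W_r)(\xi_1,\xi_2)}$, together with the splitting (set up beforehand in Section~\ref{sec:6pi}) into its same-charge part $\sigma_1=\sigma_2$ and its opposite-charge (dipole) part $\sigma_1=-\sigma_2$. For the same-charge part one has $W_s-W_r\geq\beta(C_s-C_r)(0,0)$, hence the damping $(\ell_r/\ell_s)^{\beta/2\pi}$, and with Lemma~\ref{lem:dotC} and \eqref{e:ztdef} this yields the clean bound $\|\tilde V_s^{(2),\mathrm{same}}\|=O_\beta(|z_s|^2\ell_s^2)$, which is all that is needed. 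The dipole part is essentially undamped, but it is always joined to the rest of the diagram through a difference $\dot u_s(\xi,\xi_1)+\dot u_s(\xi,\xi_2)=\beta\sigma\sigma_1[\dot C_s(x,x_1)-\dot C_s(x,x_2)]$; since $\dot C_s(x,\cdot)$ varies on scale $\ell_s$ and $\tilde V_s^{(2)}$ is supported on $|x_1-x_2|\lesssim\ell_s$, this is a discrete gradient and gains a factor $|x_1-x_2|/\ell_s$. Summing the dipole part over positions against this gain (using the heat-kernel localisation collected in Appendix~\ref{app:dotC}) replaces the internal integral $\int_0^s z_r^2\vartheta_r^2\,dr$ by $\frac1{\ell_s}\int_0^s z_r^2\ell_r\vartheta_r^2\,dr$, and by \eqref{e:ztdef} and \eqref{e:Cs-diag} the latter equals $O_\beta(z_s^2\ell_s^3)$ \emph{precisely when} $\beta<6\pi$ (for $\beta\geq 6\pi$ it is instead dominated by $r\sim 1$ and this estimate fails --- this is the second threshold of the model). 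Hence each $\tilde V_s^{(2)}$ factor also effectively obeys the clean bound $O_\beta(|z_s|^2\ell_s^2)$ in this context.

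With all factors contributing their clean bounds and $\|\dot u_s\|=O_\beta(\vartheta_s^2)$ from Lemma~\ref{lem:dotC}, the outer scale integral is of the form $\int_0^t |z_s|^n\ell_s^{2(n-1)}\vartheta_s^2\,\frac{ds}{\ell_s^2}$, possibly multiplied by the self-energy damping $e^{-(W_t-W_s)}\lesssim(\ell_s/\ell_t)^{\beta/4\pi}$ that is available whenever the contracted charges cannot all cancel (automatic for $n$ odd, in particular $n=3$). For $n\geq 4$ this integral is controlled directly by \eqref{e:intzlbd}, since $\beta<6\pi\leq 8\pi(1-1/n)$; for $n=3$ one combines the prefactor $\ell_t^{-\beta/4\pi}$ with \eqref{e:intzlbd-improved}, valid for all $\beta<8\pi$. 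This closes the induction and gives \eqref{e:Vtbd-thresh1}; the conservative case is identical, since Lemma~\ref{lem:dotC}, the entrywise positivity of $\dot C_s$, and its localisation on scale $\ell_s$ all hold verbatim with $\dot C_s$ replaced by $\dot C_s^0=e^{-sA}P$. I expect the main obstacle to be the bookkeeping for the dipole contributions: showing that in an arbitrary partition every opposite-charge pair arising from a $\tilde V^{(2)}$ factor is genuinely attached through a difference (so the gradient gain is always present), controlling the first-order Taylor remainder of $\dot C_s(x,\cdot)$, and checking that the resulting nested family of scale integrals --- one ``inner'' integral per dipole together with the ``outer'' integral in $s$ --- converge with the powers required for $\beta<6\pi$. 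This is exactly where the restriction $\beta<6\pi$ is sharp for the present method.
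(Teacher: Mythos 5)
Your proposal follows essentially the same route as the paper's proof (Lemmas~\ref{lem:V3}--\ref{lem:V4} plus the induction for $n\geq 5$): keep the damping $e^{-(W_t-W_s)}$, treat the two-particle factors via the explicit dipole formula \eqref{e:V2} together with the gradient gain forced by neutrality, and close the scale integrals with \eqref{e:intzlbd}--\eqref{e:intzlbd-improved}, with the threshold $\beta<6\pi$ entering exactly where you locate it. The only ingredients you leave implicit are the lower bound \eqref{e:Ctsdiffbd} on $W_t-W_s$ that justifies the claimed $(\ell_s/\ell_t)^{\beta/4\pi}$ damping in the $n=3$ neutral case, and the fact that entrywise positivity of $\dot C_s^0$ in the conservative case only holds up to an $O(1/L^2)$ correction --- both are supplied in Appendix~\ref{app:dotC}.
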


These bounds together imply Propositions~\ref{prop:SG-HeVC}-\ref{prop:SG-HeVC-kawasaki} when \eqref{e:small} holds.

\begin{proof}[Proof of Propositions~\ref{prop:SG-HeVC}-\ref{prop:SG-HeVC-kawasaki} assuming \eqref{e:small}]
Since the proofs are again the same, and we only prove Propositions~\ref{prop:SG-HeVC}.
The bound \eqref{e:Vtbd-thresh1} (together with the qualitative fact that $V^{(1)}$ and $V^{(2)}$ are finite)
implies that \eqref{e:V-fourier} converges, exactly as in \eqref{e:Vconverges}.
Moreover, exactly as in \eqref{e:HeVsformbd}-\eqref{e:HessCbd},
for $|z| m^{-2+\beta/4\pi}$ sufficiently small, it follows that
\begin{equation} \label{e:Vtbd-thresh1-He}
  (\lQ_tf,(\He V_t(\varphi)-\He V_t^{(2)}(\varphi))\lQ_tf) = O_\beta(|\lz_t| \vartheta_t^2)|f|_2^2.
\end{equation}
Combined with \eqref{e:V-2-bd} this gives the required bound  \eqref{e:SG-HeVC}.
The proof of the ergodicity assumption \eqref{e:ergodicity} is also identical
to that in the proof of Proposition~\ref{prop:SG-HeVC} for $\beta<4\pi$.
\end{proof}

To prove the above propositions, \emph{neutral} configurations require more careful
treatment compared to the case $\beta<4\pi$, where neutral means the following.
For a configuration $\xi = (\xi_1,\dots,\xi_k)$ we define the \emph{charge}
$\sigma(\xi)=\sum_{i=1}^k \sigma_i$ and
call $\xi$ \emph{neutral} if $\sigma(\xi)=0$ and call $\xi$ \emph{charged} otherwise.
We will sometimes decompose
\begin{gather}
  V^{(n)}(\varphi) = V^{(n,0)}(\varphi)+V^{(n,\pm)}(\varphi)
  \\
  \tilde V^{(0)}(\xi)
  =   \tilde V(\xi)1_{\sigma(\xi)=0}
  ,\quad
  \tilde V^{(\pm)}(\xi)
  =   \tilde V(\xi)1_{\sigma(\xi)\neq 0},
\end{gather}
where $V^{(n,0)}$ is defined as in \eqref{e:V-fourier} with the sum over
$\xi=(\xi_1,\dots,\xi_n)$ restricted to neutral $\xi$,
and $V^{(n,\pm)}$ by restricting the sum to charged $\xi$.
As in the proof for $\beta<4\pi$,
the starting point for the proofs is
\eqref{e:polchinski-fourier-duhamel},
but now without dropping the exponential inside the integral,
i.e., for $n > 1$,
\begin{align} \label{e:polchinski-fourier-duhamel-bis3}
  \tilde V_t(\xi_1,\dots,\xi_n)
  &=
  -\frac12
  \sum_{I_1 \dot\cup I_2= [n]}
  \int_0^t ds \, 
    \qbb{
      \sum_{i\in I_1,j\in I_2} \dot u_s(\xi_i,\xi_j)
      \tilde V_s(\xi_{I_1})\tilde V_s(\xi_{I_2})
    } e^{-(W_t(\xi)-W_s(\xi))}
    \nnb
    &=
  -\frac12
  \sum_{I_1 \dot\cup I_2= [n]}
  \int_0^t \lds \, 
    \qbb{
      \sum_{i\in I_1,j\in I_2} \ludot_s(\xi_i,\xi_j)
      \tilde V_s(\xi_{I_1})\tilde V_s(\xi_{I_2})
    } e^{-(W_t(\xi)-W_s(\xi))}
  .
\end{align}

\subsection{Proof of Proposition~\ref{prop:Vtbd-thres1-2}: the term $n=2$}

The following two lemmas give the explicit form of $\tilde V(\xi_1,\xi_2)$
and bounds on the heat kernel that imply the required bound.

\begin{lemma}
\begin{equation} \label{e:V2}
  \tilde V_t(\xi_1,\xi_2)
  =
  -z_t^2 (1-e^{-\beta \sigma_1\sigma_2 C_t(x_1,x_2)}).
\end{equation}
\end{lemma}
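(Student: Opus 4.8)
The plan is to solve the integral equation \eqref{e:polchinski-fourier-duhamel-bis3} explicitly in the case $n=2$. For two particles $\xi_1,\xi_2$, the only way to split $[2]=\{1,2\}$ into two nonempty disjoint sets is $I_1=\{1\}$, $I_2=\{2\}$ (and its symmetric counterpart $I_1=\{2\}$, $I_2=\{1\}$), so the sum over $I_1\dot\cup I_2=[2]$ has exactly two identical terms. Thus the quadratic term on the right-hand side of \eqref{e:polchinski-fourier-duhamel-bis3} reduces to $-\dot u_s(\xi_1,\xi_2)\,\tilde V_s(\xi_1)\tilde V_s(\xi_2)$ times the exponential factor $e^{-(W_t(\xi)-W_s(\xi))}$, with no contribution from the linear ($\tilde V_0$) term since $\tilde V_0^{(2)}=0$ by \eqref{e:V0}.

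Next I would substitute the already-known one-particle solution $\tilde V_s(\xi_i)=z_s = e^{-\frac12 u_s(x_i,x_i)}z_0$ from \eqref{e:V1}, and recall $\dot u_s(\xi_1,\xi_2)=\beta\dot C_s(x_1,x_2)\sigma_1\sigma_2$ and $W_s(\xi_1,\xi_2)=\frac12\sum_{k,l\in[2]}u_s(\xi_k,\xi_l) = \frac12 u_s(x_1,x_1)+\frac12 u_s(x_2,x_2)+u_s(\xi_1,\xi_2)$, where $u_s(\xi_1,\xi_2)=\beta C_s(x_1,x_2)\sigma_1\sigma_2$. The key observation is that $z_s^2 = z_0^2 e^{-u_s(x_1,x_1)/2-u_s(x_2,x_2)/2}$ (using $u_s(x,x)$ is independent of $x$), so $z_s^2 e^{-(W_t(\xi)-W_s(\xi))} = z_t^2 e^{u_s(\xi_1,\xi_2)-u_t(\xi_1,\xi_2)}$ after the diagonal terms telescope against $z_t^2/z_s^2$. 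Writing $a_s := u_s(\xi_1,\xi_2) = \beta\sigma_1\sigma_2 C_s(x_1,x_2)$, the integral becomes $-z_t^2 e^{-a_t}\int_0^t \dot a_s\, e^{a_s}\,ds$ (since $\dot u_s(\xi_1,\xi_2)=\dot a_s$), which is an exact derivative: $\int_0^t \dot a_s e^{a_s}ds = e^{a_t}-e^{a_0} = e^{a_t}-1$ because $C_0=0$. This yields $\tilde V_t(\xi_1,\xi_2) = -z_t^2 e^{-a_t}(e^{a_t}-1) = -z_t^2(1-e^{-a_t}) = -z_t^2(1-e^{-\beta\sigma_1\sigma_2 C_t(x_1,x_2)})$, which is \eqref{e:V2}.

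The only mild subtlety — and the main thing to be careful about — is the bookkeeping of the combinatorial factor and the telescoping of the Wick/self-energy exponentials: one must check that the factor of $\frac12$ in \eqref{e:polchinski-fourier-duhamel-bis3} is exactly cancelled by the two equal terms in the sum over ordered partitions $I_1\dot\cup I_2=[2]$, and that the diagonal parts of $W_t-W_s$ combine with $z_s^2$ to produce precisely $z_t^2$ (this uses translation invariance so that $C_s(x,x)$ is $x$-independent, consistent with \eqref{e:ztdef}). Everything else is a direct computation from the definitions, and the conservative case is identical since $\dot C_s^0$ and $C_s^0$ satisfy the same structural identities. No delicate estimates are needed here; the analytic work of bounding this expression (and exploiting the heat-kernel smoothing) is deferred to Proposition~\ref{prop:Vtbd-thres1-2} and the heat-kernel lemma that follows.
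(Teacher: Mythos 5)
Your proof is correct and follows the same route as the paper: both solve the $n=2$ Duhamel equation explicitly by substituting the one-particle solution $\tilde V_s(\xi_i)=z_s$, telescoping the diagonal (self-energy) parts of $W_t-W_s$ against $z_s^2/z_t^2$ to isolate $a_s:=\beta\sigma_1\sigma_2 C_s(x_1,x_2)$, and recognizing $\int_0^t\dot a_s e^{a_s}\,ds=e^{a_t}-1$ as an exact derivative. Your intermediate bookkeeping in terms of $a_s$ is slightly more explicit than the paper's (which works directly with $\dot u_s$ and $W_s$), but the substance and the key cancellation are identical.
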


\begin{proof}
By \eqref{e:polchinski-fourier-duhamel} and using that $V_s(\xi) = z_s = z_0e^{-\frac{\beta}{2} C_s(0,0)}$ by \eqref{e:V1},
\begin{align}
  \tilde V_t(\xi_1,\xi_2)
  &=
  - \int_0^t ds \,
  \dot u_s(\xi_1,\xi_2)
    \tilde V_s(\xi_{1})  \tilde V_s(\xi_{2})
    e^{-(W_t(\xi_1,\xi_2)-W_s(\xi_1,\xi_2))}
    \nnb
  &=
    - z_0^2
    e^{-W_t(\xi_1,\xi_2)}
    \int_0^t ds \,
    \dot u_s(\xi_1,\xi_2)
    e^{- \beta C_s(0,0)}
    e^{W_s(\xi_1,\xi_2)}
    .
\end{align}
Let $\sigma =\sigma_1\sigma_2$.
By \eqref{e:Wdef}, $-\beta C_s(0,0)+W_s(\xi_1,\xi_2) = \sigma  \beta C_s(x_1,x_2)$,
so the integral can be evaluated as
\begin{equation}
  \int_0^t ds \,
  \dot u_s(\xi_1,\xi_2)
  e^{-\beta C_s(0,0)}
  e^{W_s(\xi_1,\xi_2)}
  =
  \int_0^t ds \,
  \beta \sigma \dot C_s(x_1,x_2)
  e^{\beta \sigma C_s(x_1,x_2)}
  =
  e^{\beta \sigma C_t(x_1,x_2)}-1
  ,
\end{equation}
which after rearranging gives
\begin{equation}
  \tilde V_t(\xi_1,\xi_2)
  =
  -z_0^2 e^{-\beta C_t(0,0)-\beta \sigma C_t(x_1,x_2)} (  e^{\beta\sigma C_t(x_1,x_2)}-1)
  =
  -z_t^2 (1-e^{-\beta \sigma C_t(x_1,x_2)}).
  \qedhere
\end{equation}
\end{proof}

\begin{lemma} \label{lem:dotC-more}
  Let $U_t(x,y) = e^{\beta C_t(x,y)}-1$.
  The following bounds hold for $t \geq 0$, $f:\Lambda\to\R$, $\beta<8\pi$:
  \begin{align}
    \label{e:thresh1-Cbd1}
    \sup_{x_1}\sum_{x_2} |1-e^{-\beta C_t(x_1,x_2)}| &= O_\beta(\ell_t^2)
    \\
    \label{e:thresh1-Cbd2new}
    \sum_{x_1,x_2} |U_t(x_1,x_2)|(\lQ_tf(x_1) - \lQ_tf(x_2))^2 &= O_\beta(\ell_t^4 \vartheta_t^2)|f|_2^2
  \end{align}
  and again analogous estimates hold in the conservative case.
\end{lemma}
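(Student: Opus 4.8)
The plan is to reduce both estimates to the standard heat-kernel bounds of Appendix~\ref{app:dotC}, which I use freely: $\dot C_s(x,y) \geq 0$ with $\sum_y \dot C_s(x,y) = \vartheta_s^2$ (so that $\sum_y C_t(x,y) = \int_0^t \vartheta_s^2\,ds \lesssim \ell_t^2$); the Green-function bound $C_t(x,y) \leq \tfrac{1}{2\pi}\log_+\big(\ell_t/(1\vee|x-y|)\big) + O(1)$, with $C_t(x,y)=O(1)$ as soon as $|x-y| \gtrsim \ell_t$; and the heat-kernel difference estimates
\[
  \sum_z \big|Q_t(x,z)-Q_t(x',z)\big| \lesssim \Big(1\wedge\tfrac{|x-x'|}{\ell_t}\Big)\vartheta_t,
  \qquad
  \big|Q_t(x,z)-Q_t(x',z)\big| \lesssim \Big(1\wedge\tfrac{|x-x'|}{\ell_t}\Big)\frac{\vartheta_t}{\ell_t^2}\, e^{-c\,\mathrm{dist}(z,\{x,x'\})/\ell_t}.
\]
With these in hand, \eqref{e:thresh1-Cbd1} is immediate: since $C_t \geq 0$ we have $0 \leq 1-e^{-\beta C_t(x_1,x_2)} \leq \beta C_t(x_1,x_2)$, hence $\sum_{x_2}|1-e^{-\beta C_t(x_1,x_2)}| \leq \beta\sum_{x_2}C_t(x_1,x_2) = \beta\int_0^t\vartheta_s^2\,ds = O_\beta(\ell_t^2)$, uniformly in $x_1$.

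For \eqref{e:thresh1-Cbd2new} I would write $g=\lQ_t f = \ell_t Q_t f$, expand $g(x_1)-g(x_2) = \ell_t\sum_z\big(Q_t(x_1,z)-Q_t(x_2,z)\big)f(z)$, and apply Cauchy--Schwarz, using the first difference bound above in one of the two resulting $z$-sums. After exchanging the order of summation, this reduces \eqref{e:thresh1-Cbd2new} (with the claimed constant) to the estimate
\[
  \sup_z\ \sum_{x_1,x_2}|U_t(x_1,x_2)|\Big(1\wedge\tfrac{|x_1-x_2|}{\ell_t}\Big)\,\big|Q_t(x_1,z)-Q_t(x_2,z)\big| = O_\beta(\ell_t^2\vartheta_t).
\]
Into the left-hand side I would insert the second (pointwise) difference bound, carry out the sum of the exponential over $x_1$ (which gives a factor of order $\ell_t^2$, the volume of a ball of radius $\sim\ell_t$), and use translation invariance of $U_t$ and of $|x_1-x_2|$; this bounds it by $C\,\vartheta_t\sum_v|U_t(0,v)|\big(1\wedge\tfrac{|v|}{\ell_t}\big)^2$, where $v=x_1-x_2$, so that it remains to prove
\[
  \sum_v |U_t(0,v)|\Big(1\wedge\tfrac{|v|}{\ell_t}\Big)^2 = O_\beta(\ell_t^2).
\]
In this sum the range $|v|>\ell_t$ contributes $\lesssim \sum_v C_t(0,v) = \int_0^t\vartheta_s^2\,ds = O_\beta(\ell_t^2)$, because $|U_t(0,v)| \lesssim C_t(0,v) = O(1)$ there; and the range $1\leq|v|\leq\ell_t$, where $|U_t(0,v)| \lesssim (\ell_t/|v|)^{\beta/2\pi}$, contributes of order $\ell_t^{\beta/2\pi-2}\sum_{1\leq|v|\leq\ell_t}|v|^{2-\beta/2\pi}$, which is of order $\ell_t^{\beta/2\pi-2}\cdot\ell_t^{4-\beta/2\pi}=\ell_t^2$.

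The only delicate point, and the heart of the argument, is the last one: the $v$-sum $\sum_{1\leq|v|\leq\ell_t}|v|^{2-\beta/2\pi}$ is of order $\ell_t^{4-\beta/2\pi}$ precisely because $4-\beta/2\pi>0$, i.e.\ $\beta<8\pi$; this is where the hypothesis enters. Concretely, the near-diagonal singularity $|U_t(0,v)| \sim (\ell_t/|v|)^{\beta/2\pi}$ is, for $\beta>4\pi$, not summable against the bare weight $1$ on the ball of radius $\ell_t$, and what rescues the bound is exactly the factor $\big(1\wedge|x_1-x_2|/\ell_t\big)^2$ — one power from each Cauchy--Schwarz factor — which encodes the smoothing of $g$ by $Q_t$; without it one only gets the bound $O_\beta(\ell_t^{2+\beta/2\pi}\vartheta_t^2)\,|f|_2^2$, which is too large for $\beta>4\pi$ (this is the quantitative form of the failure of a uniform bound on $\tilde V_t^{(2)}$ past the first threshold). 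Everything else is routine bookkeeping with the heat-kernel estimates. Finally, the conservative case follows by the identical computation: one has $C_t^0 = C_t - \tfrac{1}{|\Lambda|}\int_0^t\vartheta_s^2\,ds$, so under \eqref{e:assLm} the correction is $O(1)$ and affects only the constants, while the heat-kernel differences are unchanged, $Q_t^0(x,z)-Q_t^0(x',z) = Q_t(x,z)-Q_t(x',z)$, since $P$ subtracts an $x$-independent constant.
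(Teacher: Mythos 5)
Your proof is correct and follows essentially the same route as the paper: the bound \eqref{e:thresh1-Cbd1} is the paper's argument verbatim, and for \eqref{e:thresh1-Cbd2new} both proofs hinge on the same two ingredients, namely the gradient-type heat-kernel estimate extracting a factor $(|x_1-x_2|/\ell_t)^2$ from the squared difference of $\lQ_t f$ (the paper's \eqref{e:pt-delta1}) and the weighted sum $\sum_v |U_t(0,v)|(|v|/\ell_t)^2 = O_\beta(\ell_t^2)$ for $\beta<8\pi$ (the paper's \eqref{e:Usumx} with $k=2$). The only difference is bookkeeping — you apply Cauchy--Schwarz to the kernel representation of $\lQ_t f(x_1)-\lQ_t f(x_2)$, while the paper splits the summand as $A_{xy}B_{xy}^2$ and uses Schur-type bounds on the auxiliary kernel $M_{xy}$ — and your treatment of the conservative case matches the paper's.
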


\begin{proof}
  The lemma again follows from estimates for the heat kernel and is given in Appendix~\ref{app:dotC}.
\end{proof}

\begin{proof}[Proof of Proposition~\ref{prop:Vtbd-thres1-2}]
We first consider $V^{(2,\pm)}$. By \eqref{e:V2} and \eqref{e:thresh1-Cbd1},
\begin{equation}
  \sum_y |\tilde V_t((x,+1),(y,+1))|
  = O(|z_t|^2)   \sum_y |1-e^{-\beta C_t(x,y)}| = O(|z_t|^2\ell_t^2),
\end{equation}
which is analogous to the bound for $\beta < 4\pi$ and thus gives 
\begin{equation} \label{e:HeV2pm}
  |(\lQ_tf,\He V_t^{(2,\pm)}(\varphi) \lQ_tf)|
  = O_\beta(|z_t|^2 \ell_t^4 \vartheta_t^2) |f|_2^2
  = O_\beta(|\lz_t|^2 \vartheta_t^2) |f|_2^2
\end{equation}
exactly as in \eqref{e:HessCbd}.
On the other hand, the neutral contribution to $V^{(2)}$ is given by
\begin{equation}
  V^{(2,0)}_t(\varphi) = z_t^2 \sum_{x,y} U_t(x,y) \cos(\sqrt{\beta}\varphi_{x}-\sqrt{\beta}\varphi_{y}),
  \quad
  U_t(x,y) = e^{\beta C_t(x,y)}-1
  .
\end{equation}
Therefore
\begin{equation} \label{e:pf-He-V2-bd}
  (\lQ_t f,\He V^{(2,0)}_t(\varphi) \lQ_tf)
  =
  -z_t^2 \beta \sum_{x,y}
  U_t(x,y)
  \cos(\sqrt{\beta}\varphi_x-\sqrt{\beta}\varphi_y)
  (\lQ_tf(x)-\lQ_tf(y))^2
    .
  \end{equation}
  By \eqref{e:thresh1-Cbd2new}, the right-hand side is bounded by
  $O_\beta (|z_t|^2\ell_t^4\vartheta_t^2) |f|_2^2= O_\beta (|\lz_t|^2\vartheta_t^2) |f|_2^2$.
\end{proof}

\begin{remark}
  Similarly as in \eqref{e:thresh1-Cbd2new}, for $t>0$, $f:\Lambda\to\R$, $\beta<6\pi$,
  assuming \eqref{e:assLm}, we have
  \begin{equation} \label{e:thresh1-Cbd4}
    \sum_{x_1,x_2} |U_t(x_1,x_2)||Q_tf(x_1) - Q_tf(x_2)| = O_\beta(\ell_t^2 \vartheta_t)|f|_1;
  \end{equation}
  see Appendix~\ref{app:dotC}.
  Therefore, as in \eqref{e:pf-He-V2-bd},
  \begin{align}
  (Q_tf,\nabla V_t^{(2,0)})
    &= - z_t^2\sqrt{\beta} \sum_{x,y} U_t(x,y) \sin(\sqrt{\beta} \varphi_x-\sqrt{\beta}\varphi_y) (Q_tf(x)-Q_tf(y))
      \nnb
    &= O_\beta(|z_t|^2 \ell_t^2\vartheta_t)|f|_1
      = O_\beta(|\lz_t z_t| \vartheta_t)|f|_1
      = O_\beta(|z_t| \vartheta_t)|f|_1,
  \end{align}
  provided that $\lz_t = O(1)$. Exactly as in \eqref{e:HeV2pm}, the same bound holds for $V^{(2,\pm)}$,
  and as in \eqref{e:Vtbd-thresh1-He} for $V-V^{(2)}$.
  In summary, whenever $|\lz_t|$ is sufficiently small and \eqref{e:assLm} holds,
  \begin{equation} \label{e:QtpartialVtbd}
    \max_x |(Q_t\nabla V_t)_x| = O_\beta(|z_t|\vartheta_t).
  \end{equation}
\end{remark}

\subsection{Proof of Proposition~\ref{prop:Vtbd-thres1-no2}: the terms $n>2$}
\label{sec:6pi-ngt2}

To bound the contributions due to \eqref{e:V2}, we need the following bounds on the heat kernel.
For the statement of the bounds, we set
\begin{align}
  \label{e:delta12-def}
  \delta_{12}\lCdot_t(x_1,x_2,x_3) &= \lCdot_t(x_1,x_3)- \lCdot_t(x_2,x_3)
  \\
    \label{e:delta1234-def}
  \delta_{34}\delta_{12}\lCdot_t(x_1,x_2,x_3,x_4) &= (\lCdot_t(x_1,x_3) - \lCdot_t(x_2,x_3)) - (\lCdot_t(x_1,x_4) - \lCdot_t(x_2,x_4))
                                                    .
\end{align}

\begin{lemma} \label{lem:dotC-more2}
Let $U_t(x,y) = e^{\beta C_t(x,y)}-1$.
The following bounds hold for $t \geq 0$, $\beta<6\pi$:
\begin{align}
  \label{e:thresh1-Cbd3}
  \sup_{x_1} \sum_{x_2,x_3} |U_t(x_1,x_2)\delta_{12}\lCdot_t(x_1,x_2,x_3)|
  &= O_\beta(\ell_t^4 \vartheta_t^2)
  \\
  \label{e:thresh1-Cbd2b}
  \sup_{x_1}\sum_{x_2,x_3,x_4}|U_t(x_1,x_2)U_t(x_3,x_4)\delta_{34}\delta_{12}\lCdot_t(x_1,x_2,x_3,x_4)|
  &= O_\beta(\ell_t^6\vartheta_t^2)
    ,
\end{align}
and the same bounds hold with the roles of the $x_i$ exchanged.
Also, for all $t>s>0$, $x_i\in\Lambda$, 
\begin{equation}
  \label{e:Ctsdiffbd}
  (C_t-C_s)(0,0) - (C_t-C_s)(x_1,x_2) + (C_t-C_s)(x_1,x_3) - (C_t-C_s)(x_2,x_3)
  \geq
  - O(1).
\end{equation}
Again analogous estimates hold in the conservative case.
\end{lemma}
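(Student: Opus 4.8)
The plan is to reduce all three bounds to pointwise estimates on the heat kernel $\dot C_s = e^{-sA}$ in $d=2$, which are collected in Appendix~\ref{app:dotC}; the inputs I would use are the Gaussian bound $|\dot C_s(x,y)| \lesssim \ell_s^{-2}\vartheta_s^2\, e^{-c|x-y|/\ell_s}$ and the improved fact that each nearest-neighbour difference of $\lCdot_s$ in one variable gains a factor $\ell_s^{-1}$ in the summed estimate, so that, besides $\sup_x\sum_y|\lCdot_s(x,y)| = O(\ell_s^2\vartheta_s^2)$ from Lemma~\ref{lem:dotC}, one has $\sup_x\sum_y|\lCdot_s(x',y) - \lCdot_s(x,y)| = O(\ell_s\vartheta_s^2)$ for neighbours $x,x'$ and $O(1)$ for a mixed second difference; together with the logarithmic behaviour $C_t(x,y) = \frac{1}{2\pi}\log\frac{\ell_t}{1\vee|x-y|} + O(1)$ for $1\vee|x-y|\le\ell_t$, with $0\le C_t(x,y) = O(1)$ decaying rapidly for $|x-y|\gtrsim\ell_t$ and $\sum_y C_t(x,y) = \int_0^t\vartheta_s^2\,ds = O(\ell_t^2)$. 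From the latter, for $\beta<8\pi$ one gets $|U_t(x,y)| \lesssim_\beta (\ell_t/(1\vee|x-y|))^{\beta/2\pi}$ on $|x-y|\le\ell_t$ and $|U_t(x,y)| \lesssim C_t(x,y)$ beyond. The conservative case is identical throughout, since $\dot C_s^0 = e^{-sA}P$ differs from $\dot C_s$ by a uniformly negligible rank-one correction.

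For \eqref{e:thresh1-Cbd3} the key point is that each difference $\delta_{ij}$ buys one extra factor: telescoping $\delta_{12}\lCdot_t(x_1,x_2,x_3)$ along a lattice path from $x_1$ to $x_2$ and using the gradient bound gives $\sum_{x_3}|\delta_{12}\lCdot_t(x_1,x_2,x_3)| \lesssim \ell_t^2\vartheta_t^2\bigl(1\wedge\tfrac{|x_1-x_2|}{\ell_t}\bigr)$, the factor $1\wedge\tfrac{|x_1-x_2|}{\ell_t}$ becoming the trivial bound $2\sup_x\sum_y|\lCdot_t(x,y)|$ once $|x_1-x_2|\gtrsim\ell_t$. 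Summing against $|U_t(x_1,x_2)|$: the region $|x_1-x_2|\le\ell_t$ contributes $\lesssim_\beta \ell_t^2\vartheta_t^2\,\ell_t^{\beta/2\pi-1}\sum_{1\le r\le\ell_t} r^{2-\beta/2\pi} \lesssim \ell_t^4\vartheta_t^2$, where the radial sum is $\asymp\ell_t^{3-\beta/2\pi}$ \emph{exactly when $\beta<6\pi$} (for $\beta\ge6\pi$ it diverges with $\ell_t$); the region $|x_1-x_2|>\ell_t$, where $|U_t|\lesssim C_t$, contributes $\lesssim \ell_t^2\vartheta_t^2\sum_{x_2}C_t(x_1,x_2) \lesssim \ell_t^4\vartheta_t^2$. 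Exchanging the roles of the $x_i$ changes nothing.

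For \eqref{e:thresh1-Cbd2b} one iterates this in both pairs. Telescoping the mixed second difference $\delta_{34}\delta_{12}\lCdot_t$ over lattice paths $x_1\to x_2$ and $x_3\to x_4$ and using the second-difference bound above gives $\sum_{x_3}|\delta_{34}\delta_{12}\lCdot_t(x_1,x_2,x_3,x_4)| \lesssim \ell_t^2\vartheta_t^2\bigl(1\wedge\tfrac{|x_1-x_2|}{\ell_t}\bigr)\bigl(1\wedge\tfrac{|x_3-x_4|}{\ell_t}\bigr)$, with the $x_3$-support contained (up to rapidly decaying tails) in $|x_1-x_3|\lesssim\ell_t$. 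Then $\sum_{x_2}|U_t(x_1,x_2)|\bigl(1\wedge\tfrac{|x_1-x_2|}{\ell_t}\bigr) \lesssim_\beta \ell_t^2$, $\sum_{x_4}|U_t(x_3,x_4)|\bigl(1\wedge\tfrac{|x_3-x_4|}{\ell_t}\bigr) \lesssim_\beta \ell_t^2$ uniformly in $x_3$ (both by the same $\beta<6\pi$ radial sum), and $\sum_{x_3}\mathbf 1_{|x_1-x_3|\lesssim\ell_t} \lesssim \ell_t^2$, whose product is $\ell_t^6\vartheta_t^2$. The regions where $|x_1-x_2|>\ell_t$ or $|x_3-x_4|>\ell_t$ — where the corresponding $U_t$ is replaced by the summable $C_t$ and its difference factor by $1$ — are treated the same way and are no larger. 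This bookkeeping, tracking the near/far alternative for each of $x_2,x_3,x_4$ and checking that no power of $\ell_t$ is lost, is the main nuisance of the lemma; the mechanism itself is the single gain $\sum_{1\le r\le\ell_t} r^{2-\beta/2\pi} \lesssim \ell_t^{3-\beta/2\pi}$ per difference, which is precisely what $\beta<6\pi$ provides.

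For \eqref{e:Ctsdiffbd}, write $D = C_t - C_s = \int_s^t\dot C_{r}\,dr$, which is positive semidefinite, and set $\gamma_D(x,y) = D(0,0) - D(x,y) = \tfrac12\EE_D[(\zeta_x-\zeta_y)^2] \ge 0$ (using $D(x,x) = D(0,0)$ by translation invariance). A line of algebra gives the identity $D(0,0) - D(x_1,x_2) + D(x_1,x_3) - D(x_2,x_3) = \gamma_D(x_1,x_2) - \gamma_D(x_1,x_3) + \gamma_D(x_2,x_3)$, so it suffices to prove $\gamma_D(x_1,x_3) \le \gamma_D(x_1,x_2) + \gamma_D(x_2,x_3) + O(1)$. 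The logarithmic estimates above yield $2\pi\,\gamma_D(x,y) = \phi(1\vee|x-y|) + O(1)$ with $\phi(\rho) = \log\frac{\ell_t\wedge\rho}{\ell_s\wedge\rho}$, which is nonnegative, nondecreasing, and $1$-Lipschitz in $\log\rho$. Since $1\vee|x_1-x_3| \le 2\max(1\vee|x_1-x_2|,\,1\vee|x_2-x_3|)$ by the triangle inequality, monotonicity together with $\phi(2\rho) \le \phi(\rho) + \log 2$ and $\phi(\max(a,b)) = \max(\phi(a),\phi(b)) \le \phi(a)+\phi(b)$ gives $\phi(1\vee|x_1-x_3|) \le \phi(1\vee|x_1-x_2|) + \phi(1\vee|x_2-x_3|) + \log 2$; dividing by $2\pi$ and substituting into the identity shows the left side of \eqref{e:Ctsdiffbd} is $\ge -O(1)$. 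The conservative case again follows verbatim.
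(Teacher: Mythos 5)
Your proposal is correct, and it is worth separating the two halves. For \eqref{e:thresh1-Cbd3} and \eqref{e:thresh1-Cbd2b} you use the same mechanism as the paper — each difference $\delta_{ij}$ costs a factor $1\wedge(|x_i-x_j|/\ell_t)$ via telescoping gradient bounds on the heat kernel, and the weighted sum $\sum_{x}|U_t(0,x)|\,(|x|/\ell_t)^k$ closes at order $\ell_t^2$ precisely for $\beta<2\pi(k+2)$, which is where $\beta<6\pi$ enters with $k=1$. The organisational difference is that the paper packages the loss of spatial localisation for $|x_i-x_j|\gg\ell_t$ into compensating exponential weights $e^{\pm c'|x-y|/\ell_t}$ carried through \eqref{e:pt-delta1}--\eqref{e:pt-delta2} and absorbed by \eqref{e:Usumx}, whereas you split into the regions $|x_i-x_j|\lessgtr\ell_t$ and use the trivial bound plus $|U_t|\lesssim C_t$ in the far region; both resolve the same issue and your case bookkeeping for \eqref{e:thresh1-Cbd2b} does close at $\ell_t^6\vartheta_t^2$. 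For \eqref{e:Ctsdiffbd} your route is genuinely different from the paper's: you rewrite the left-hand side as $\gamma_D(x_1,x_2)-\gamma_D(x_1,x_3)+\gamma_D(x_2,x_3)$ for the variogram $\gamma_D$ of $D=C_t-C_s$ and deduce an approximate triangle inequality directly from the two-sided logarithmic asymptotics $2\pi\gamma_D(x,y)=\phi(1\vee|x-y|)+O(1)$ with $\phi$ nondecreasing, subadditive up to $\log 2$. The paper instead argues by cases on which of the three pairs is closest, using a split of the time integral at $|x-y|^2$ together with gradient bounds in one case and monotonicity of the continuum Gaussian kernel in $|x-z|$ in the other, plus positive definiteness of $D$. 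Your variogram argument is more uniform and arguably cleaner; the paper's case analysis avoids needing the matching lower bound in the asymptotics for $C_t(x,y)$ (it only uses upper bounds on differences and the explicit continuum comparison \eqref{e:Cxy-cont}), but the two-sided estimate \eqref{e:Cxy} is available in the paper anyway, so nothing is lost.
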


\begin{proof}
  The lemma again follows from estimates for the heat kernel and is given in Appendix~\ref{app:dotC}.
\end{proof}

\begin{lemma} \label{lem:V3}
  Let $\beta < 6\pi$. Then $\nVt{3} \lesssim |z_t|^3 \ell_t^4$.
  Analogous bounds hold in the conservative case.
\end{lemma}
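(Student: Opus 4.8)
The plan is to apply the Duhamel identity \eqref{e:polchinski-fourier-duhamel-bis3} with $n=3$. For three particles every admissible partition $I_1 \dot\cup I_2 = [3]$ consists of a singleton $\{k\}$ and a pair $\{l,m\}=[3]\setminus\{k\}$, so, collecting the two orderings of each partition,
\[
  \tilde V_t(\xi_1,\xi_2,\xi_3)
  = -\sum_{k=1}^{3} \int_0^t \lds \, \bigl[\ludot_s(\xi_k,\xi_l)+\ludot_s(\xi_k,\xi_m)\bigr]\, \tilde V_s(\xi_k)\, \tilde V_s(\xi_l,\xi_m)\, e^{-(W_t(\xi)-W_s(\xi))}.
\]
Into this I would substitute $\tilde V_s(\xi_k)=z_s$ from \eqref{e:V1} and the explicit two-particle potential $\tilde V_s(\xi_l,\xi_m)=-z_s^2\bigl(1-e^{-\beta\sigma_l\sigma_m C_s(x_l,x_m)}\bigr)$ from \eqref{e:V2}. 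Since $W_s(\xi)$ contains the self-energy $\frac{3\beta}{2}C_s(0,0)$ and $z_s^3=z_0^3 e^{-\frac{3\beta}{2}C_s(0,0)}$, the factor $z_s^3\,e^{-(W_t(\xi)-W_s(\xi))}$ equals $z_t^3$ times the residual Boltzmann weight $\exp\bigl(-\beta\sum_{i<j}\sigma_i\sigma_j (C_t-C_s)(x_i,x_j)\bigr)$, and after this bookkeeping $|\tilde V_t^{(3)}(\xi)|$ is bounded by $|z_t|^3$ times an explicit scale integral of heat kernels; the goal is to show this integral is $O_\beta(\ell_t^4)$ uniformly in the particle held fixed in $\nVt{3}$.

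The next step is a case split on the internal charge of the pair. If $\sigma_l=\sigma_m$ (charged pair) then $|1-e^{-\beta\sigma_l\sigma_m C_s(x_l,x_m)}|\le 1$, the residual weight is $\le 1$ because $W_t-W_s\ge 0$, and one argues as in the proof for $\beta<4\pi$: sum the pair position with $\sum_{x_m}|1-e^{-\beta C_s(x_l,x_m)}|=O(\ell_s^2)$ from \eqref{e:thresh1-Cbd1} and the singleton position with $\sup_x\sum_y|\lCdot_s(x,y)|=O(\ell_s^2\vartheta_s^2)$ from \eqref{e:Cs-diag}. The genuinely new, and hardest, case is $\sigma_l=-\sigma_m$ (neutral pair), where $\tilde V_s(\xi_l,\xi_m)=z_s^2 U_s(x_l,x_m)$ with $U_s(x,y)=e^{\beta C_s(x,y)}-1$ no longer uniformly bounded. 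Here the structural point is that for a neutral pair the vertex factor contracts to a difference, $\ludot_s(\xi_k,\xi_l)+\ludot_s(\xi_k,\xi_m)=\pm\beta\,\delta_{12}\lCdot_s$ in the notation of \eqref{e:delta12-def}, so the dangerous factor $U_s(x_l,x_m)$ enters precisely in the combination $|U_s\,\delta_{12}\lCdot_s|$ that \eqref{e:thresh1-Cbd3} controls, giving $O_\beta(\ell_s^4\vartheta_s^2)$ after summing the two free positions (with any one of the three held fixed). For the residual Boltzmann weight one must \emph{not} use a crude supremum bound, since $(C_t-C_s)$ at coinciding points is of order $\frac{1}{2\pi}\log(\ell_t/\ell_s)$ and the product of the several exponentials would then be too large once $\beta\ge 4\pi$. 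Instead I would keep $U_s(x_l,x_m)$ bundled with the piece $e^{+\beta(C_t-C_s)(x_l,x_m)}$ of the weight and use the elementary identity
\[
  U_s(x,y)\,e^{\beta(C_t-C_s)(x,y)}=e^{\beta C_t(x,y)}-e^{\beta(C_t-C_s)(x,y)}\le U_t(x,y)
\]
to trade the scale-$s$ quantity for its scale-$t$ analogue, then bound the remaining mixed-sign cross terms with the lower bound \eqref{e:Ctsdiffbd} (which says exactly that the leftover combination of $(C_t-C_s)$ is at least $-(C_t-C_s)(0,0)-O(1)$), if needed together with an integration by parts in $s$ via $\partial_s e^{\beta(C_t-C_s)(x,y)}=-\beta\dot C_s(x,y)e^{\beta(C_t-C_s)(x,y)}$ to absorb the surviving $\lCdot_s$.

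Assembling the spatial estimates, in every case $\nVt{3}$ is reduced to $|z_t|^{3}$ times a scale integral of products of heat kernels of precisely the type appearing in the bounds \eqref{e:intzlbd}--\eqref{e:intzlbd-improved}; using $\ell_s\ge 1$ to control the small-scale end and $\vartheta_s^2$ for large scales, it evaluates to $O_\beta(|z_t|^3\ell_t^4)$. The conservative case is word-for-word identical, because Lemmas~\ref{lem:dotC}, \ref{lem:dotC-more} and \ref{lem:dotC-more2} all hold there. I expect the main obstacle to be exactly the neutral-pair term: one has to carry the large factor $U_s(x_l,x_m)$ together with the compensating heat-kernel difference $\delta_{12}\lCdot_s$ and the appropriate piece of the exponential weight, rather than estimate any of them separately, and it is the heat-kernel bounds \eqref{e:thresh1-Cbd3} and \eqref{e:Ctsdiffbd} --- available only for $\beta<6\pi$ --- that make this balancing possible, which is where the hypothesis $\beta<6\pi$ enters.
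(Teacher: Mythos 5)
Your overall strategy is the paper's: Duhamel with $n=3$, substitute the explicit $\tilde V^{(1)}$ and $\tilde V^{(2)}$, split according to the charge of the pair, and for a neutral pair exploit that the vertex factor contracts to $\delta_{12}\lCdot_s$ so that \eqref{e:thresh1-Cbd3} controls the combination $|U_s\,\delta_{12}\lCdot_s|$. However, both branches of your case split have a quantitative gap.

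In the charged case you bound the residual weight $e^{-(W_t-W_s)}$ by $1$ and then integrate $\int_0^t |z_s|^3\ell_s^4\vartheta_s^2\,\lds$. This only yields $O(|z_t|^3\ell_t^4)$ via \eqref{e:intzlbd} when $\beta<8\pi(1-1/3)=16\pi/3$; for $16\pi/3\le\beta<6\pi$ the integral is dominated by small $s$ and is of order $|z_0|^3\asymp|z_t|^3\ell_t^{3\beta/4\pi}$, which is much larger than $|z_t|^3\ell_t^4$. The paper therefore uses \eqref{e:Ctsdiffbd} (in the form \eqref{e:u3bd}, $W_t-W_s\ge\frac{\beta}{2}(C_t-C_s)(0,0)-O(1)$) in the \emph{charged} case as well, extracting the decay $e^{-(W_t-W_s)}\lesssim(\ell_s/\ell_t)^{\beta/4\pi}$ so that the improved integral bound \eqref{e:intzlbd-improved}, valid for all $\beta<8\pi$, applies. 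So "argue as for $\beta<4\pi$" is not enough here; the residual weight must be used.

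In the neutral case your treatment of the weight does not close. After the trade $U_s(x_1,x_2)e^{\beta(C_t-C_s)(x_1,x_2)}\le U_t(x_1,x_2)$ the leftover cross terms are $e^{\beta((C_t-C_s)(x_2,x_3)-(C_t-C_s)(x_1,x_3))}$, and \eqref{e:Ctsdiffbd} only converts these into $e^{\beta(C_t-C_s)(0,0)}e^{-\beta(C_t-C_s)(x_1,x_2)}\asymp(\ell_t/\ell_s)^{\beta/2\pi}e^{-\beta(C_t-C_s)(x_1,x_2)}$, reintroducing a growing factor worse than what the trade saved; moreover, once $U$ sits at scale $t$ while $\delta_{12}\lCdot$ sits at scale $s$, the bound \eqref{e:thresh1-Cbd3} (stated with both factors at the same scale) no longer applies. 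The paper's route avoids all of this: it does \emph{not} split the weight, but bounds the whole of $e^{-(W_t-W_s)}$ by $O(1)(\ell_s/\ell_t)^{\beta/4\pi}$ via \eqref{e:u3bd}, keeps $U_s\,\delta_{12}\lCdot_s$ entirely at scale $s$, sums it to $O_\beta(\ell_s^4\vartheta_s^2)$ by \eqref{e:thresh1-Cbd3}, and then applies \eqref{e:intzlbd-improved}. With these two corrections your argument coincides with the paper's; the conservative case is indeed identical.
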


\begin{proof}
We start from \eqref{e:polchinski-fourier-duhamel-bis3}.
We assume $I_1=\{1,2\}$, $I_2=\{3\}$ since the other cases are analogous.
We first consider the case that $\xi_{I_1}$ is neutral. Then
\begin{align} \label{e:V3-pf1}
  &- \int_0^t ds \,
    \sum_{i=1,2} \dot u_s(\xi_i,\xi_3)
    \tilde V_s(\xi_1,\xi_2) \tilde V_s(\xi_{3}) 
    e^{-(W_t(\xi_1,\xi_2,\xi_3)-W_s(\xi_1,\xi_2,\xi_3))}
    \nnb
  &=
    \pm \beta \int_0^t \lds \,
    (\lCdot_s(x_1,x_3)-\lCdot_s(x_2,x_3)) U_s(x_1,x_2)
    z_s^3
    e^{-(W_t(\xi_1,\xi_2,\xi_3)-W_s(\xi_1,\xi_2,\xi_3))}
    .
\end{align}
By the definition of $W$ in \eqref{e:Wdef} and by \eqref{e:Ctsdiffbd},
\begin{equation} \label{e:u3bd}
  W_t(\xi_1,\xi_2,\xi_3)-W_s(\xi_1,\xi_2,\xi_3)
  \geq \frac{\beta}{2} (C_t-C_s)(0,0) - O(1)
  = \frac{\beta}{4\pi} \log(\ell_t/\ell_s)
  - O(1)
  .
\end{equation}
By \eqref{e:thresh1-Cbd3},
\begin{equation}
  \sup_{x_1}\sum_{x_2,x_3} |\delta_{12}\lCdot_s(x_1,x_2,x_3) U_s(x_1,x_2)| \lesssim \ell_s^4 \vartheta_s^2
  .
\end{equation}
Substituting these bounds into \eqref{e:V3-pf1},
this shows that the contribution to $\nVt{3}$
from neutral $\xi_{I_1}$ is bounded by
\begin{align}
\label{eq: upper bound neutral n=3}
  \ell_t^{-\beta/4\pi} \int_0^t \lds\, |z_s|^3 \ell_s^4 \ell_s^{\beta/4\pi} \, \vartheta_s^2
  \lesssim
  |z_t|^3 \ell_t^4
\end{align}
where we used \eqref{e:intzlbd-improved}.

We turn now to the charged case $\sigma_1 = \sigma_2$.
Note that \eqref{e:u3bd} follows as above if $\sigma_3=-\sigma_1$
and in fact holds with the better lower bound $\frac{3\beta}{4\pi} \log(\ell_t/\ell_s)-O(1)$
by positive definiteness of $C_t-C_s$ if $\sigma_3=\sigma_1$, i.e., if all charges are the same.
From the explicit form \eqref{e:V2} of $\tilde V_s(\xi_1,\xi_2)$, we thus get 
\begin{align*}
  &- \int_0^t ds \,
    \sum_{i=1,2} \dot u_s(\xi_i,\xi_3)
    \tilde V_s(\xi_1,\xi_2) \tilde V_s(\xi_{3}) 
    e^{-(W_t(\xi_1,\xi_2,\xi_3)-W_s(\xi_1,\xi_2,\xi_3))}
    \nnb
  & \lesssim 
 \beta \int_0^t \lds \,
    (\lCdot_s(x_1,x_3)+\lCdot_s(x_2,x_3)) |1-e^{-\beta C_s(x_1,x_2)}|
    |z_s|^3
    \left( \frac{\ell_s}{\ell_t} \right)^{\frac{\beta}{4\pi}} 
   .
\end{align*}
As the sum over $x_3$ can be controlled uniformly in $x_1,x_2$ by $O(\ell_t^2 \vartheta_t^2)$ thanks to \eqref{e:Cs-diag}
and then the sum over $x_2$ can be estimated by $O(\ell_t^2)$ thanks to \eqref{e:thresh1-Cbd1}, we end up with the same upper bound as 
in \eqref{eq: upper bound neutral n=3}. This completes the charged case.
\end{proof}

\begin{lemma} \label{lem:V4}
  Let $\beta < 6\pi$ and assume \eqref{e:assLm}.
  Then $\nVt{4} \lesssim |z_t|^4 \ell_t^6$.
  Analogous bounds hold in the conservative case.
\end{lemma}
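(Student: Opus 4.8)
\textbf{Plan for the proof of Lemma~\ref{lem:V4} ($n=4$).}
The plan is to start from the Duhamel formula \eqref{e:polchinski-fourier-duhamel-bis3} with $n=4$ and classify the terms according to the partition $I_1 \dot\cup I_2 = [4]$. By symmetry of $\tilde V$ in its arguments there are essentially two types of partitions: $|I_1|=3, |I_2|=1$ (the $3{+}1$ split) and $|I_1|=|I_2|=2$ (the $2{+}2$ split). For the $3{+}1$ split we insert the bound $\nVs{3} \lesssim |z_s|^3 \ell_s^4$ from Lemma~\ref{lem:V3} together with $\nVs{1} = |z_s|$, and sum one of the two heat-kernel factors $\ludot_s(\xi_i,\xi_j)$ over the free position using $\|\ludot_s\| = \beta \sup_{x}\sum_y |\lCdot_s(x,y)| = O_\beta(\ell_s^2\vartheta_s^2)$ from \eqref{e:Cs-diag}. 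The surviving exponential $e^{-(W_t(\xi)-W_s(\xi))}$ is bounded below using \eqref{e:Ctsdiffbd} (or plain positive definiteness of $C_t-C_s$ in the fully charged case) by $(\ell_s/\ell_t)^{\beta/4\pi}$ up to an $O(1)$ constant, exactly as in the proof of Lemma~\ref{lem:V3}. This leaves the scale integral
\begin{equation*}
  \ell_t^{-\beta/4\pi}\int_0^t \frac{ds}{\ell_s^2}\, |z_s|^4\, \ell_s^4\cdot \ell_s^2 \cdot \ell_s^{\beta/4\pi}\,\vartheta_s^2
  = \ell_t^{-\beta/4\pi}\int_0^t \frac{ds}{\ell_s^2}\,|z_s|^4 \ell_s^{6}\,\ell_s^{\beta/4\pi}\,\vartheta_s^2,
\end{equation*}
which by \eqref{e:intzlbd-improved} (valid for all $\beta<8\pi$ with $n=4$) is $\lesssim |z_t|^4 \ell_t^{6}$, as required.

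For the $2{+}2$ split the situation is more delicate because a uniform bound on $\tilde V_s(\xi_1,\xi_2)$ fails for $\beta \geq 4\pi$: one must exploit the neutral structure. When at least one pair, say $\xi_{I_1}$, is \emph{charged}, its charge is $\pm$ and so $\tilde V_s(\xi_{I_1})=O(|z_s|^2|1-e^{-\beta C_s(x_1,x_2)}|)$ with the sum over the relative position controlled by $O(\ell_s^2)$ via \eqref{e:thresh1-Cbd1}; this is effectively the $\beta<4\pi$-type estimate and is harmless. The genuinely new case is when \emph{both} pairs are neutral, so $\tilde V_s(\xi_{I_1}) = -z_s^2 U_s(x_1,x_2)$ and $\tilde V_s(\xi_{I_2}) = -z_s^2 U_s(x_3,x_4)$ with $U_s = e^{\beta C_s}-1$. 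The crucial point is that the bridging factor $\sum_{i\in I_1,j\in I_2}\ludot_s(\xi_i,\xi_j)$ is, up to signs, precisely $\beta\,\delta_{34}\delta_{12}\lCdot_s(x_1,x_2,x_3,x_4)$ (the second difference in \eqref{e:delta1234-def}), because the neutrality $\sigma_1+\sigma_2 = \sigma_3+\sigma_4 = 0$ makes the naive sum of four heat kernels collapse to the double difference. I would therefore use \eqref{e:thresh1-Cbd2b},
\begin{equation*}
  \sup_{x_1}\sum_{x_2,x_3,x_4}|U_s(x_1,x_2)U_s(x_3,x_4)\,\delta_{34}\delta_{12}\lCdot_s(x_1,x_2,x_3,x_4)| = O_\beta(\ell_s^6\vartheta_s^2),
\end{equation*}
together with the exponential lower bound from \eqref{e:Ctsdiffbd} (which must be checked to apply to this configuration of four neutral charges — expanding $W_t-W_s$ and grouping the eight pair terms into double differences plus a diagonal part gives the needed $\frac{\beta}{4\pi}\log(\ell_t/\ell_s)-O(1)$), to reduce to the scale integral $\ell_t^{-\beta/4\pi}\int_0^t \frac{ds}{\ell_s^2}|z_s|^4 \ell_s^6 \ell_s^{\beta/4\pi}\vartheta_s^2 \lesssim |z_t|^4\ell_t^6$ by \eqref{e:intzlbd-improved} once more.

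Collecting the $3{+}1$ contribution, the charged $2{+}2$ contribution, and the neutral $2{+}2$ contribution, and absorbing the finitely many combinatorial constants into $C_\beta$, yields $\nVt{4} \lesssim |z_t|^4 \ell_t^6$; the conservative case is identical since all heat-kernel inputs \eqref{e:Cs-diag}, \eqref{e:thresh1-Cbd1}, \eqref{e:thresh1-Cbd2b}, \eqref{e:Ctsdiffbd} are stated to hold there as well. The main obstacle I anticipate is the neutral $2{+}2$ term: one has to recognise that charge neutrality of both halves is exactly what turns the bridging kernel into the second difference $\delta_{34}\delta_{12}\lCdot_s$ (so that the dangerous logarithmic growth of $C_s(0,0)$ in $U_s$ is compensated), and one must verify that the exponential damping bound \eqref{e:Ctsdiffbd} indeed covers all sign patterns arising here — this is the step where the restriction $\beta<6\pi$ is used, through \eqref{e:intzlbd-improved} and \eqref{e:thresh1-Cbd2b}.
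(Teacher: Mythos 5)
Your overall architecture is the right one and matches the paper's: split into the $3+1$ and $2+2$ partitions, and for the doubly neutral $2+2$ term observe that neutrality collapses the bridging kernel into the double difference $\delta_{34}\delta_{12}\lCdot_s$ so that \eqref{e:thresh1-Cbd2b} applies. But there is a genuine error in how you propose to handle the exponential $e^{-(W_t-W_s)}$. The lower bound you want, $W_t(\xi)-W_s(\xi)\geq\frac{\beta}{2}(C_t-C_s)(0,0)-O(1)$, is \emph{false} for general four-point configurations, and fails precisely in the doubly neutral case you single out: take $\sigma_1=\sigma_3=+1$, $\sigma_2=\sigma_4=-1$ with $x_1=x_4$ and $x_2=x_3$. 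Then the signed charge density $\sum_k\sigma_k\delta_{x_k}$ vanishes identically, so $W_u(\xi)=\frac{\beta}{2}(\rho,C_u\rho)=0$ for all $u$ and $W_t-W_s=0$, whereas the claimed lower bound grows like $\frac{\beta}{4\pi}\log(\ell_t/\ell_s)$. The three-point estimate \eqref{e:Ctsdiffbd} does not extend to four points, and the "grouping into double differences plus a diagonal part" you anticipate cannot produce it. The paper sidesteps this entirely: for all $n=4$ terms it drops the exponential using only $W_t-W_s\geq 0$ and closes the scale integral with \eqref{e:intzlbd} at $n=4$, which is valid exactly for $\beta<8\pi(1-1/4)=6\pi$ — this, not \eqref{e:intzlbd-improved} (which holds for all $\beta<8\pi$), is where the threshold enters for these terms.

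A second gap is your treatment of the mixed $2+2$ case. You group "at least one pair charged" with the $\beta<4\pi$-type estimates, but if $I_1$ is charged and $I_2$ is neutral you must still sum the neutral pair's factor $z_s^2U_s(x_3,x_4)$ over its internal coordinate, and $\sup_{x_3}\sum_{x_4}|U_s(x_3,x_4)|$ is of order $\ell_s^{\beta/2\pi}\gg\ell_s^2$ once $\beta\geq4\pi$ — the very divergence that forces the special treatment of $n=2$. The neutral pair must always be coupled to the difference $\delta_{12}\lCdot_s$ coming from the bridging kernel and estimated via \eqref{e:thresh1-Cbd3}, as the paper does in \eqref{e:I2is2-n4}; only the doubly charged case is genuinely of $\beta<4\pi$ type. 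With these two repairs — drop the exponential by positive definiteness and use \eqref{e:intzlbd}, and handle neutral--charged via \eqref{e:thresh1-Cbd3} — your argument becomes the paper's.
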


\begin{proof}
We again start from \eqref{e:polchinski-fourier-duhamel-bis3}.
Up to permutation of the indices,
there are terms with $|I_1|=1$, $|I_2|=3$ and $|I_1|=|I_2|=2$.
We begin with the case $|I_1|=1$ and $|I_1|=3$.
Using that $|\ludot_s| \lesssim \ell_s^2\vartheta_s^2$ and that $\nVs{1} \lesssim |z_s|$
and $\nVs{3} \lesssim |z_s|^3 \ell_s^4$ (by \eqref{e:V1} and Lemma~\ref{lem:V3}),
\begin{equation}
  \sup_{\xi_1} \sum_{\xi_2,\dots,\xi_n}  |\ludot_s(\xi_i,\xi_j)  \tilde V_s(\xi_{I_1})V_s(\xi_{I_2})|
  \leq \|\ludot_s\| \nVs{1} \nVs{3}
  \lesssim |z_s|^4 \ell_s^6\vartheta_s^2,
\end{equation}
and we obtain  the claimed bound exactly as in the proof for $\beta<4\pi$.

In the remainder of the proof we bound the terms with $|I_1|=|I_2|=2$.
We begin with the case that $\xi_{I_1}$ and $\xi_{I_2}$ are both neutral.
Up to permutation of the indices,
we may then assume
$\xi_{I_1} = ((x_1,+1), (x_2,-1))$ and $\xi_{I_2} = ((x_3,+1), (x_4,-1))$.
By \eqref{e:V2}, 
using $\ludot_t(\xi_1,\xi_j)+\ludot_t(\xi_2,\xi_j) = \sigma_1\sigma_j(\lCdot_t(x_1,x_j)-\lCdot_t(x_2,x_j))$
and analogously for the sum over $j$,
\begin{equation}
  \sum_{i\in I_1,j\in I_2} \ludot_t(\xi_i,\xi_j)
  \tilde V_t(\xi_{I_1}) \tilde V_t(\xi_{I_2})
  =
  z_t^4 U_t(x_1,x_2)U_t(x_3,x_4)
  \delta_{34}\delta_{12}\lCdot_t(x_1,x_2,x_3,x_4).
\end{equation}
Hence, by \eqref{e:thresh1-Cbd2b} and \eqref{e:intzlbd} for $\beta<6\pi$,
\begin{equation}
  \sup_{x_1} \sum_{x_2,x_3,x_4} \int_0^t \lds \absa{ \sum_{i\in I_1,j\in I_2}\ludot_s(\xi_i,\xi_j)   \tilde V_s(\xi_{I_1}) \tilde V_s(\xi_{I_2})}
  \lesssim \int_0^t \lds \, |z_s|^4 \ell_s^6 \vartheta_s^2
  \lesssim |z_t|^4 \ell_t^6.
\end{equation}
In the case that $I_1$ is neutral and $I_2$ is charged, we similarly use
\begin{align}  \label{e:I2is2-n4}
  &\sup_{\xi_1} \sum_{\xi_2, \dots, \xi_n}
    \absa{
    \frac12 \int_0^t \lds \,
     \sum_{j\in I_2}  \qa{ \sum_{i\in I_1} \ludot_s(\xi_i,\xi_j) \tilde V_s(\xi_{I_1})1_{\sigma(\xi_{I_1})=0}}\tilde V_s(\xi_{I_2})1_{\sigma(\xi_{I_2}) \neq 0 }
  }
  \nnb
  &\leq  \beta
    \int_0^t \frac{ds}{\ell_s^2} \,
    \qbb{ \sup_{x_1} \sum_{x_2,x_3} \absa{(\lCdot_s(x_1,x_3)-\lCdot_s(x_2,x_3)) U_s(x_1,x_2)}}
    \qbb{ \sup_{\xi_3}\sum_{\xi_4} \abs{\tilde V_s(\xi_{I_2})}1_{\sigma(\xi_{I_2}) \neq 0 }}
  .
\end{align}
By \eqref{e:thresh1-Cbd3}, the first bracket is bounded by
\begin{equation}
  O_\beta(|z_t|^2\ell_t^4\vartheta_t^2).
\end{equation}
Since $\xi_{I_2}$ is charged, the contribution from $V(\xi_{I_2})$ term is bounded 
using  \eqref{e:thresh1-Cbd1} by
\begin{align} \label{e:I2bd-b}
  \sup_{\xi_3}\sum_{\xi_4} \abs{\tilde V_t(\xi_{I_2})}1_{\sigma(\xi_{I_2}) \neq 0 }
  \lesssim 
  |z_t|^2 \sup_{x_3} \sum_{x_4} |1-e^{- \beta C_t(x_3,x_4)}|
  \lesssim |z_t|^{2}\ell_t^2.
\end{align}
So altogether these contributions to \eqref{e:I2is2-n4} are again
bounded using \eqref{e:intzlbd} (and $\beta<6\pi$) by
\begin{equation}
  \int_0^t \frac{ds}{\ell_s^2} \, |z_s|^4 \ell_s^{6} \vartheta_s^2
  \lesssim |z_t|^4 \ell_t^6
  .
\end{equation}
Again the case that $\xi_{I_1}$ and $\xi_{I_2}$ are both charged is easier and analogous to the proof for $\beta<4\pi$
so omitted.
\end{proof}

\begin{lemma}
  Let $\beta < 6\pi$ and assume \eqref{e:assLm}.
  Then $\nVt{n} \leq n^{n-2}|z_t|^n(C_\beta\ell_t^2)^{n-1}$ for all $n\geq 5$.
  Analogous bounds hold in the conservative case.
\end{lemma}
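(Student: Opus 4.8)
The plan is to establish the bound by strong induction on $n\ge 5$, running the argument of Proposition~\ref{prop:Vtbd} but incorporating the two refinements used for $n=3,4$: the exponential weight $e^{-(W_t(\xi)-W_s(\xi))}$ in \eqref{e:polchinski-fourier-duhamel-bis3} is retained rather than bounded by $1$ whenever a relevant sub-configuration carries net charge (so that, via \eqref{e:u3bd} and its analogues, it contributes a factor $(\ell_s/\ell_t)^{\beta/4\pi}$, making \eqref{e:intzlbd-improved} available for all $\beta<8\pi$), and neutral size-two sub-configurations are treated by first extracting a discrete-derivative cancellation. The induction starts from $n=1$, where $\nVt{1}=|z_t|$ is already of the claimed form, and from $n=3,4$, supplied by Lemmas~\ref{lem:V3} and \ref{lem:V4} (which imply $n^{n-2}|z_t|^n(C_\beta\ell_t^2)^{n-1}$ once $C_\beta$ is fixed large enough). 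A standalone bound on $\nVt{2}$ is never needed: in the recursion a factor $\tilde V^{(2)}$ enters either through its charged part, where $\sup_{\xi_1}\sum_{\xi_2}|\tilde V_s^{(\pm)}(\xi_1,\xi_2)|=O_\beta(|z_s|^2\ell_s^2)$ by \eqref{e:V2} and \eqref{e:thresh1-Cbd1} is of the right form, or through its neutral part, handled below.

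In the induction step I would start from \eqref{e:polchinski-fourier-duhamel-bis3} for the given $n$ and organise the sum over $[n]=I_1\dot\cup I_2$ by the size of the smaller block. If $\min(|I_1|,|I_2|)\ge 3$, then exactly as in Proposition~\ref{prop:Vtbd} one bounds $\sup_{\xi_1}\sum_{\xi_2,\dots,\xi_n}|\dot u_s(\xi_i,\xi_j)\tilde V_s(\xi_{I_1})\tilde V_s(\xi_{I_2})|\le\|\dot u_s\|\,\nVs{|I_1|}\,\nVs{|I_2|}$, inserts the inductive bounds together with $\|\dot u_s\|=O_\beta(\vartheta_s^2)$ (from \eqref{e:Cs-diag}), and the $s$-integral reduces to $\int_0^t|z_s|^n\ell_s^{2(n-1)}\vartheta_s^2\,\lds\lesssim\tfrac1n|z_t|^n(C_\beta\ell_t^2)^{n-1}$ by \eqref{e:intzlbd}; this estimate is available because $\beta<6\pi<8\pi(1-1/n)$ for every $n\ge 5$, so, unlike for $n\le 4$, no borderline case arises. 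A split with a size-one block is identical, with $\nVs{1}=|z_s|$. A split with a size-two block is decomposed into its charged and neutral parts: the charged part is treated as above using $\sup_{\xi_1}\sum_{\xi_2}|\tilde V_s^{(\pm)}(\xi_1,\xi_2)|=O_\beta(|z_s|^2\ell_s^2)$; for the neutral part, relabel so that $I_1=\{1,2\}$ with $\sigma_2=-\sigma_1$, write $\tilde V_s^{(0)}(\xi_{I_1})=-z_s^2 U_s(x_1,x_2)$, and note that for each $j$ in the other block $\sum_{i\in I_1}\dot u_s(\xi_i,\xi_j)=\pm\beta\sigma_j\ell_s^{-2}\,\delta_{12}\lCdot_s(x_1,x_2,x_j)$; bounding $\sup_{x_1}\sum_{x_2,x_j}|U_s(x_1,x_2)\,\delta_{12}\lCdot_s(x_1,x_2,x_j)|=O_\beta(\ell_s^4\vartheta_s^2)$ by \eqref{e:thresh1-Cbd3} and the remaining sum by $\nVs{n-2}$ (legitimate since $n-2\ge 3$) again brings the $s$-integral into the form $\int_0^t|z_s|^n\ell_s^{2(n-1)}\vartheta_s^2\,\lds$.

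Collecting the splits, the weight attached to $|I_1|=k$ is $\tfrac12\binom{n}{k}k(n-k)$ times the inductive powers $k^{k-2}(n-k)^{n-k-2}$, and the identity $\sum_{k=1}^{n-1}\binom{n}{k}k^{k-1}(n-k)^{n-k-1}=2(n-1)n^{n-2}$ from the proof of Proposition~\ref{prop:Vtbd}, together with the $\tfrac1n$ from \eqref{e:intzlbd}, yields a net factor $\le n^{n-2}$ (the size-two neutral contributions likewise produce $\lesssim\binom{n}{2}(n-2)^{n-3}\cdot\tfrac1n\le(n-1)(n-2)^{n-3}\le n^{n-2}$). The remaining point is to absorb the finitely many $O_\beta(1)$-constants and the powers of $C_\beta$ coming from the inductive hypothesis into a single constant; this is where, as in the $n=4$ estimate, in those sub-cases where \eqref{e:intzlbd} alone is too weak one must retain the exponential $e^{-(W_t-W_s)}$ to gain a factor $(\ell_s/\ell_t)^{\beta/4\pi}$ and replace \eqref{e:intzlbd} by \eqref{e:intzlbd-improved}. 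This gives $\nVt{n}\le n^{n-2}|z_t|^n(C_\beta\ell_t^2)^{n-1}$ with a single $\beta$-dependent constant for all $n\ge 5$. The conservative case is identical, since Lemmas~\ref{lem:dotC}, \ref{lem:dotC-more}, \ref{lem:dotC-more2} hold verbatim with $\dot C_t$ replaced by $\dot C_t^0$ and the algebraic identities are unchanged.

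The main obstacle is not a new analytic estimate — the inputs are exactly the heat-kernel bounds of Lemmas~\ref{lem:dotC}, \ref{lem:dotC-more}, \ref{lem:dotC-more2} and the scale integrals \eqref{e:intzlbd}--\eqref{e:intzlbd-improved} — but the combinatorial bookkeeping: at every split one must separate charged sub-configurations (where the exponential suppression is kept) from neutral size-two sub-configurations (where the cancellation $\delta_{12}\lCdot_s$ must be extracted before the heat-kernel sums are performed), and then check that the combinatorial factors and accumulated constants close the induction with one constant $C_\beta$ uniform in $n$. The restriction to $n\ge 5$ is precisely what puts the plain bound \eqref{e:intzlbd} at one's disposal for all $\beta<6\pi$.
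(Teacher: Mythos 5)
Your proposal is correct and follows essentially the same route as the paper: strong induction anchored at $n=1,3,4$, the generic splits handled via $\|\dot u_s\|\,\nVs{|I_1|}\nVs{|I_2|}$ and \eqref{e:intzlbd} (available for all $n\geq 5$ since $\beta<6\pi<8\pi(1-1/n)$), and the size-two blocks treated separately through the explicit form \eqref{e:V2}, with the neutral part controlled by the $\delta_{12}\lCdot_s$ cancellation and \eqref{e:thresh1-Cbd3}. The only cosmetic difference is that for $n\geq 5$ the paper never needs to retain the exponential weight or invoke \eqref{e:intzlbd-improved}, which you yourself note in your closing remark.
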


\begin{proof} 
Similarly as in the proof of \eqref{e:Vtbd}, we make the inductive assumption that,
for some $n \geq 4$,
the bound \eqref{e:Vtbd-thresh1} holds for all $1\leq k\leq n$, $k \neq 2$.
By \eqref{e:V1} and Lemmas~\ref{lem:V3}-\ref{lem:V4}, the inductive assumption is
verified for $n=4$.
To advance the induction we again start from
\begin{equation} \label{e:polchinski-fourier-duhamel-bis2}
  |\tilde V_t(\xi_1,\dots,\xi_n)| \leq
  \frac12
  \sum_{I_1 \dot\cup I_2= [n]}
    \int_0^t ds \, 
  \absbb
  {
    \sum_{i\in I_1,j\in I_2} \dot u_s(\xi_i,\xi_j)
    \tilde V_s(\xi_{I_1})\tilde V_s(\xi_{I_2})
  }
  .
\end{equation}
For $|I_1|=n-k \neq 2$ and $|I_2|=k \neq 2$, we use
\begin{align}
  \sup_{\xi_1} \sum_{\xi_2,\dots,\xi_n}  |\dot u_s(\xi_i,\xi_j)  \tilde V_s(\xi_{I_1})  \tilde V_s(\xi_{I_2})|
  \leq \|\dot u_s\| \nVs{n-k} \nVs{k}
  ,
\end{align}
and bound the terms on the right-hand side using the inductive assumption.
Then exactly as in the proof for $\beta<4\pi$, i.e., of \eqref{e:Vtbd}, the result is
\begin{equation}
  \sup_{\xi_1} \sum_{\xi_2,\dots,\xi_n}
  \sum_{I_1 \dot\cup I_2 = [n] \atop |I_1|\neq 2, |I_2|\neq 2}
  \int_0^t ds\,
   \sum_{i\in I_1,j\in I_2}
  |\dot u_s(\xi_i,\xi_j)  \tilde V_s(\xi_{I_1})   \tilde  V_s(\xi_{I_2})|
  \leq n^{n-2} |z_t|^n (C_\beta \ell_t^2)^{n-1}.
\end{equation}
The terms with $|I_1|=2$ or $|I_2|=2$ require special treatment. By symmetry we may
assume that $|I_1|=2$
and that $I_1=\{1,2\}$ and $I_2=\{3,\dots,n\}$ with $n\geq 5$.
If $\xi_{I_1}$ is neutral, we use
\begin{align}  \label{e:I2is2-pf1}
  &\sup_{\xi_1} \sum_{\xi_2, \dots, \xi_n}
    \absa{
    \frac12 \int_0^t \lds \,
    \sum_{j\in I_2}  \qa{ \sum_{i\in I_1} \ludot_s(\xi_i,\xi_j) \tilde V_s(\xi_{I_1})
    1_{\sigma(\xi_{I_1})=0}}\tilde V_s(\xi_{I_2})
  }
  \nnb
  &\leq (n-2)
    \int_0^t \frac{ds}{\ell_s^2} \,
    \qbb{ \sup_{x_1} \sum_{x_2,x_3} \absa{(\lCdot_s(x_1,x_3)-\lCdot_s(x_2,x_3)) U_s(x_1,x_2)}}
    \qbb{ \sup_{\xi_3}\sum_{\xi_4,\dots,\xi_n} \abs{\tilde V_s(\xi_{I_2})}}
  .
\end{align}
By \eqref{e:thresh1-Cbd3}, the first bracket is bounded by
$O_\beta(z_t^2\ell_t^4\vartheta_t^2)$,
while for the second term involving $V(\xi_{I_2})$,
using inductive assumption for $\tilde V(\xi_{I_2})$ (note that $n-2 \geq 3$) to get
\begin{align} \label{e:I2bd}
  \sup_{\xi_3}\sum_{\xi_4,\dots,\xi_n} \abs{\tilde V_t(\xi_{I_2})}
  \leq \nVt{n-2}
  \leq (n-2)^{n-4} |z_t|^{n-2}(C_\beta \ell_t^2)^{n-3}.
\end{align}
So altogether these contributions to \eqref{e:I2is2-pf1} are bounded by
(using again \eqref{e:intzlbd} for $\beta<6\pi$),
\begin{align}
  O_\beta(1)(n-2)^{n-3}C_\beta^{n-3} \int_0^t |z_s|^n \ell_s^{2(n-1)} \vartheta_s^2 \, \lds
  \lesssim C_\beta^{-2} n^{n-4} |z_t|^n (C_\beta \ell_t^2)^{n-1}
  \leq n^{n-4} |z_t|^n (C_\beta \ell_t^2)^{n-1}
\end{align}
where in the last bound we have chosen $C_\beta$ sufficiently large (independently of $n$).
Summing over the $\binom{n}{2} \leq n^2$ choices for $I_1,I_2$ with $|I_1| = 2$ leads to the expected upper bound.
The charged case holds in the same way.
\end{proof}

\begin{proof}[Proof of Proposition~\ref{prop:Vtbd-thres1-no2}]
  The bounds \eqref{e:Vtbd-thresh1} follows by combining the previous three lemmas.
\end{proof}

\subsection{Proofs of Propositions~\ref{prop:SG-HeVC}-\ref{prop:SG-HeVC-kawasaki} without \eqref{e:small}} 
\label{sec:zlarge}

Finally, we remove the assumption \eqref{e:small}
at the cost of constants that are uniform in $\epsilon$ but not uniform in $L$.
For $t\leq t_0$, where $t_0$ is sufficiently small but of order $1/\epsilon^2$,
we can apply the same analysis as before. On the other hand, for $t\geq t_0$,
a very crude argument is sufficient to show that the Hessian of the effective
potential is bounded from below uniformly in $\epsilon$.
Our starting point for this is \eqref{e:PtHessV}, i.e.,
\begin{equation}
  \label{e:PtHessV-bis}
  (f,\He V_tf) = \PP_{t_0,t} (f, \He V_{t_0}f)
  - \pa{ \PP_{t_0,t}((f, \nabla V_{t_0})^2) -(\PP_{t_0,t}(f, \nabla V_{t_0}))^2}.
\end{equation}
The input from the previous analysis is summarised in the following lemma.

\begin{lemma}   \label{lem:Hessbdt}
Let $\beta<6\pi$.
Then there is a constant $\alpha = \alpha(\beta) > 0$ such that for all $t\geq 0$ satisfying $|\lz_t| \leq \alpha$
and \eqref{e:assLm}, the following bounds
hold uniformly in $\varphi \in X$, $f \in X$, and $x\in\Lambda$:
\begin{align}
  \label{e:Hessbdt}
  |(Q_tf,\He V_{t} Q_tf)| &\leq O_\beta(|z_t|\vartheta_{t}^2)|f|_2^2
  \\
  \label{e:gradVbd}
  |(Q_t\nabla V_t)_x| &\leq O_\beta(|z_t| \vartheta_t)  
  .
\end{align}
\end{lemma}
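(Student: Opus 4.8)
The plan is to observe that the two bounds \eqref{e:Hessbdt}--\eqref{e:gradVbd} are, respectively, exactly the estimates \eqref{e:SG-HeVC} and \eqref{e:QtpartialVtbd} already established in Sections~\ref{sec:6pi}--\ref{sec:6pi-ngt2}, and that the derivation of those estimates used the global smallness hypothesis \eqref{e:small} \emph{only} to guarantee that $|\lz_t|$ lies below a fixed $\beta$-dependent threshold at every scale $t$. Here that smallness is instead imposed scale by scale (via the hypothesis $|\lz_t|\le\alpha$), so the same argument applies verbatim at the relevant scales.

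Concretely, I would first fix $\alpha=\alpha(\beta)>0$ so small that $eC_\beta\alpha\le\frac12$, where $C_\beta$ is the constant of Proposition~\ref{prop:Vtbd-thres1-no2}. Both Proposition~\ref{prop:Vtbd-thres1-2} and Proposition~\ref{prop:Vtbd-thres1-no2} hold under \eqref{e:assLm} alone (the latter also needing $\beta<6\pi$), so for any $t$ with $|\lz_t|\le\alpha$, rewriting Proposition~\ref{prop:Vtbd-thres1-no2} via $\lz_t=\ell_t^2 z_t$ gives $\ell_t^2\,\nVt{n}\le n^{n-2}|\lz_t|^n C_\beta^{n-1}$ for all $n\ge3$. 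This makes the Fourier series \eqref{e:V-fourier} and its termwise derivatives converge absolutely, so $V_t$ is the convolution solution by Lemma~\ref{lem:polchinki-integral}. Differentiating \eqref{e:V-fourier} twice, estimating the charge sum $\sum_k\sigma_k(\lQ_t f)_{x_k}$ by Cauchy--Schwarz, using the symmetry of $\tilde V_t^{(n)}$ and $|\lQ_t f|_2=\ell_t|Q_t f|_2\le\ell_t\vartheta_t|f|_2$, and summing the geometric series over $n\ge3$ together with the explicit $n=1$ term ($\tilde V_t^{(1)}=z_t$, with diagonal Hessian) yields $(\lQ_t f,(\He V_t-\He V_t^{(2)})(\varphi)\lQ_t f)=O_\beta(|\lz_t|\vartheta_t^2)|f|_2^2$, exactly as in \eqref{e:Vtbd-thresh1-He}. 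Combined with \eqref{e:V-2-bd} this gives $(\lQ_t f,\He V_t(\varphi)\lQ_t f)=O_\beta(|\lz_t|\vartheta_t^2)|f|_2^2$, and dividing by $\ell_t^2$ (using $|\lz_t|\ell_t^{-2}=|z_t|$) is exactly \eqref{e:Hessbdt}.

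For \eqref{e:gradVbd} I would run the same three-part decomposition $\nabla V_t=\nabla V_t^{(1)}+\nabla V_t^{(2)}+\sum_{n\ge3}\nabla V_t^{(n)}$. Differentiating \eqref{e:V-fourier} once and using $\sum_y Q_t(x,y)=\vartheta_t$ with the symmetry of $\tilde V_t^{(n)}$, the terms $n\ge3$ sum to $O_\beta(|\lz_t|^3\ell_t^{-2}\vartheta_t)\le O_\beta(|z_t|\vartheta_t)$ (since $|\lz_t|\le\alpha\le1$); the $n=1$ term satisfies $|(Q_t\nabla V_t^{(1)})_x|\le 2\sqrt\beta|z_t|\vartheta_t$; and the $n=2$ term is treated exactly as in the Remark following Proposition~\ref{prop:Vtbd-thres1-2}, using the explicit form \eqref{e:V2} with the heat-kernel bound \eqref{e:thresh1-Cbd4} for the neutral part $V_t^{(2,0)}$ and \eqref{e:thresh1-Cbd1} (as in \eqref{e:HeV2pm}) for the charged part $V_t^{(2,\pm)}$, yielding $\max_x|(Q_t\nabla V_t^{(2)})_x|=O_\beta(|z_t|\vartheta_t)$ whenever $|\lz_t|=O(1)$. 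Summing the three pieces gives \eqref{e:gradVbd}.

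The computations above are routine; the one point requiring genuine care is the claim on which the whole argument rests, namely that every ingredient feeding into \eqref{e:SG-HeVC} and \eqref{e:QtpartialVtbd} --- the heat-kernel lemmas of Section~\ref{sec:SG} and Propositions~\ref{prop:Vtbd-thres1-2}--\ref{prop:Vtbd-thres1-no2} --- holds under \eqref{e:assLm} alone, with smallness of $|\lz_t|$ invoked solely to sum the geometric series in $n$, so that a single $\beta$-dependent $\alpha$ suffices. I expect auditing this, rather than any calculation, to be the main obstacle. Note finally that we need neither that $|\lz_t|$ be small at all scales nor any monotonicity in $t$: Lemma~\ref{lem:Hessbdt} only asserts the bounds at scales with $|\lz_t|\le\alpha$, and the complementary range is handled separately by \eqref{e:PtHessV-bis}.
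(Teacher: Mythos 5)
Your proposal is correct and follows essentially the same route as the paper, whose proof simply points back to \eqref{e:V-2-bd}, \eqref{e:Vtbd-thresh1-He} and \eqref{e:QtpartialVtbd}: the key observation in both cases is that Propositions~\ref{prop:Vtbd-thres1-2}--\ref{prop:Vtbd-thres1-no2} and the heat-kernel lemmas require only \eqref{e:assLm}, with smallness of $|\lz_t|$ entering solely to sum the series in $n$ at the given scale. Your explicit audit of this point and the choice $eC_\beta\alpha\le\tfrac12$ are exactly what the paper leaves implicit.
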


\begin{proof}
  For $\beta<4\pi$, these bounds follow exactly as in \eqref{e:HeVsformbd}-\eqref{e:HessCbd}.
  For $\beta<6\pi$, the bound on the Hessian is as in \eqref{e:V-2-bd} and \eqref{e:Vtbd-thresh1-He},
  and for $\nabla V_t$, see \eqref{e:QtpartialVtbd}.
\end{proof}

\begin{proof}[Proof of Theorems~\ref{prop:SG-HeVC}-\ref{prop:SG-HeVC-kawasaki} without \eqref{e:small}]
  From \eqref{e:Cs-diag},
  recall that $e^{-\frac{\beta}{2} C_t(0,0)} \asymp \ell_t^{-\beta/4\pi}$ and
  hence that $|z_t| \asymp \epsilon^{2} (\epsilon \ell_t)^{-\beta/4\pi} |z|$ and
  $|\lz_t| \asymp (\epsilon\ell_t)^{2-\beta/4\pi}|z|$.
  Here $a\asymp b$ denotes that $c_\beta \leq a/b \leq 1/c_\beta$ for some constant $c_\beta>0$.
  Let $t_\alpha>0$ be such that $|\lz_{t_\alpha}| = \alpha$.
  Thus $\epsilon \ell_{t_\alpha} \asymp (\alpha/|z|)^{1/(2-\beta/4\pi)}$ and hence
  \begin{equation}
    |z_{t_\alpha}| = O_\beta(\epsilon^2 (\epsilon\ell_{t_\alpha})^{-\beta/4\pi}|z|)
    = O_\beta(\epsilon^2 |z|^{1/(1-\beta/8\pi)}).
  \end{equation}
  Also, with $t_{m,L} = \epsilon^{-2}(m^{-2}\wedge L^2)$ as in \eqref{e:assLm},
  \begin{equation}
    |z_{t_{m,L}}| = O_\beta(\epsilon^2(m^{-1}\wedge L)^{-\beta/4\pi}|z|).
  \end{equation}
  We choose $t_0= t_\alpha \wedge t_{m,L}$ so that, since $|z_t|$ in decreasing in $t$ (see \eqref{e:ztdef}),
  \begin{equation}
    |z_{t_0}|
    = O_\beta(\epsilon^2) \pa{ (m^{-1}\wedge L)^{-\beta/4\pi}|z| + |z|^{1/(1-\beta/8\pi)}}
    = O_{\beta,z,m,L}(\epsilon^2)
    .
  \end{equation}
  With this and since $|\Lambda|=\epsilon^{-2}L^2$, it follows from \eqref{e:gradVbd} that, uniformly in $\varphi$,
  \begin{equation}
    |Q_{t_0} \nabla V_{t_0}|_2^2 = \sum_{x\in\Lambda} (Q_{t_0}\nabla V_{t_0})_x^2 \leq
    O_{\beta,z,m,L}(\epsilon^2\vartheta_{t_0}^2).
  \end{equation}
  For any $t\geq t_0$, by the Cauchy-Schwarz inequality and $|Q_{t-t_0}f|_2 \leq \vartheta_{t-t_0} |f|_2$, in particular,
  \begin{equation}  \label{e:dVbdt0}
    (Q_tf, \nabla V_{t_0})^2
    \leq O_{\beta,z,m,L}(\epsilon^2\vartheta_{t_0}^2) |Q_{t-t_0}f|_2^2
    \leq O_{\beta,z,m,L}(\epsilon^2\vartheta_t^2) |f|_2^2.
  \end{equation}
  Similarly, by \eqref{e:Hessbdt}, 
  \begin{equation} \label{e:Hessbdt0}
    |(Q_tf,\He V_{t_0} Q_tf)| \leq O_\beta(z_{t_0}\vartheta_{t_0}^2)|Q_{t-t_0}f|_2^2 = O_\beta(|z|\epsilon^2\vartheta_t^2)|f|_2^2.  
  \end{equation}
  Substituting \eqref{e:dVbdt0}-\eqref{e:Hessbdt0}
  into \eqref{e:PtHessV-bis}, using that $\PP_{t_0,t}$ is a Markov operator, we conclude that, for all $t\geq t_0$,
  \begin{equation}
    (Q_tf,\He V_{t} Q_tf) \geq \dot \mu_t  |f|_2^2, \qquad \text{where } \dot\mu_t \geq - O_{\beta,z,m,L}(\epsilon^2\vartheta_t^2).
  \end{equation}
  For $t\leq t_0$, we have $\dot\mu_t = O_\beta(|z_t|\vartheta_t^2) = O_\beta(|z|)\epsilon^2\vartheta_t^2$
  exactly as in the proofs of the theorems in the case \eqref{e:small}.
  In summary, for all $t \geq 0$,
  \begin{equation}
    \mu_{t} \geq - (O_\beta(|z|) + O_{\beta,z,m,L}(1))\int_{0}^\infty \epsilon^2 \vartheta_t^2 \geq -\mu^*(\beta,z,m,L),
  \end{equation}
  with $\mu^*(\beta,z,m,L)$ independent of $\epsilon$.
  From this bound, the remainder of the proof is the same as in the case \eqref{e:small}.
\end{proof}

\appendix
\section{Heat kernel estimates: proof of Lemmas~\ref{lem:dotC} and \ref{lem:dotC-more}-\ref{lem:dotC-more2}}
\label{app:dotC}

In this appendix, we prove Lemmas~\ref{lem:dotC} and \ref{lem:dotC-more}-\ref{lem:dotC-more2}.
These follow from standard estimates for the lattice heat kernel $p_t(x) = e^{t\Delta}(0,x)$ on $\Z^d$
and its torus version $p_t^L(x) = \sum_{y \in \Z^d} p_t(x+Ly)$, where $L \in \N$.
Throughout the appendix, $\Delta$ and $\nabla$ denote the lattice Laplacian and derivative on $\Z^d$,
not the Laplacian and gradient on $\R^\Lambda$.

\subsection{Bounds on the heat kernel}

We begin by collecting estimates on the heat kernel on $\Z^d$.
To state these, let $\alpha$ be a sequence of $|\alpha|\equiv k$ unit vectors $\alpha_1, \dots, \alpha_{k}$ in $\Z^d$,
i.e., $\alpha_i \in \{e_{1\pm}, \dots, e_{d\pm}\}$
is one of the $2d$ unit vectors $e_{i\pm}$ in $\Z^d$,
and write $\nabla^\alpha = \prod_{i=1}^k \nabla_{\alpha_i}$
with $\nabla_ef(x) = f(x+e)-f(x)$ the lattice gradient.
For $x\in\Z^d$, $|x|$ denotes any fixed norm unless stated.

\begin{lemma} \label{lem:pt}
The heat kernel $p_t$ on $\Z^d$ satisfies the following upper bounds for $t \geq 1$, $x\in\Z^d$,
and all sequences of unit vectors $\alpha$:
\begin{equation}
  \label{e:ptbounds}
  |\nabla^\alpha p_t(x)| = O_\alpha(t^{-d/2-|\alpha|/2}e^{-c|x|/\sqrt{t}}),
\end{equation}
  as well as the following asymptotics if $d=2$, for $t\geq 1$ and $x \neq 0$,
\begin{equation}  \label{e:ptasymp}
  p_t(0) = \frac{1}{4\pi t} + O(\frac{1}{t^2}),
  \qquad
  \int_0^t (p_s(0)-p_s(x)) \, ds = \frac{1}{2\pi} \log (|x| \wedge \sqrt{t}) + O(1).
\end{equation}
Moreover, the heat kernel $p_t^L$ on a discrete torus of side length $L$ satisfies, for $t\geq 1$,  $|x|_\infty <L/2$,
\begin{equation} 
  \label{e:pttorus}
  \nabla^\alpha p_t^L(x) = \nabla^\alpha p_t(x)
  + O_\alpha(t^{-|\alpha|/2}  L^{-d} e^{-cL/\sqrt{t}} )
\end{equation}
and the mean $0$ heat kernel on the torus is given by $p_t^{0,L}(x) = p^L_t(x)-1/L^2$.
\end{lemma}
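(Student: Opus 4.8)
The plan is to derive all four assertions from the Fourier representation of the lattice heat kernel together with a single contour-shift (Bernstein) estimate and elementary expansions, reducing the torus statements to the $\Z^d$ statements by periodisation. On $\Z^d$ one has $p_t(x) = (2\pi)^{-d}\int_{[-\pi,\pi]^d} e^{-t\omega(k)}e^{ik\cdot x}\,dk$ with $\omega(k)=\sum_{j=1}^d 4\sin^2(k_j/2)$ the symbol of $-\Delta$, which satisfies $\omega(k)\asymp|k|^2$ on $[-\pi,\pi]^d$ and $\omega(k)=|k|^2-\tfrac1{12}\sum_j k_j^4+O(|k|^6)$ near the origin. Applying $\nabla^\alpha$ multiplies the integrand by $\prod_{i\le|\alpha|}(e^{ik\cdot\alpha_i}-1)$, a factor that is bounded everywhere and of size $O(|k|^{|\alpha|})$ near $k=0$. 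Since the integrand is $2\pi$-periodic and entire in each $k_j$, the contour $[-\pi,\pi]^d$ may be replaced by $[-\pi,\pi]^d+ic$ for any $c\in\R^d$.

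For the Gaussian bound \eqref{e:ptbounds} I would shift the contour by $ic$ with $c_j=t^{-1/2}x_j/|x|$, so $|c|=t^{-1/2}$. This extracts the factor $e^{-c\cdot x}=e^{-|x|/\sqrt t}$ (any fixed norm being comparable), while $\operatorname{Re}\omega(k+ic)\ge\omega(k)-O(|c|^2)\ge c_0|k|^2-O(1/t)$, so that $\int e^{-t\operatorname{Re}\omega(k+ic)}\,dk=O(t^{-d/2})$ after substituting $k=u/\sqrt t$; the gradient factor $O((|k|+|c|)^{|\alpha|})$ then supplies an extra $O(t^{-|\alpha|/2})$. Together these give \eqref{e:ptbounds}; alternatively one simply cites the standard Gaussian estimates for random-walk kernels.

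For the $d=2$ asymptotics \eqref{e:ptasymp}, in $p_t(0)=(2\pi)^{-2}\int e^{-t\omega(k)}\,dk$ I restrict to $|k|\le t^{-1/2+\delta}$ (the complement contributing $e^{-ct^{2\delta}}$), substitute $k=u/\sqrt t$, and use $t\omega(u/\sqrt t)=|u|^2-\tfrac1{12t}\sum_j u_j^4+O(|u|^6/t^2)$ to obtain $p_t(0)=(2\pi)^{-2}t^{-1}\int_{\R^2}e^{-|u|^2}\,du+O(t^{-2})=(4\pi t)^{-1}+O(t^{-2})$. For the second identity, note that the planar potential kernel $a(x):=\int_0^\infty(p_s(0)-p_s(x))\,ds$ converges, since the nonnegative integrand is $\le p_s(0)=O(1/s)$ for $1\le s\le|x|^2$, is $\le c|x|^2\int e^{-s\omega}|k|^2\,dk=O(|x|^2/s^{2})$ for $s\ge|x|^2$, and is bounded on $[0,1]$, and satisfies $a(x)=\tfrac1{2\pi}\log|x|+\kappa+o(1)$ as $|x|\to\infty$ by the classical asymptotics of the two-dimensional walk. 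If $|x|\le\sqrt t$, the tail $\int_t^\infty(p_s(0)-p_s(x))\,ds=O(|x|^2/t)=O(1)$, so $\int_0^t(p_s(0)-p_s(x))\,ds=a(x)+O(1)=\tfrac1{2\pi}\log|x|+O(1)$; if $|x|\ge\sqrt t$, then $\int_0^t p_s(x)\,ds=O(1)$ by \eqref{e:ptbounds} while $\int_0^t p_s(0)\,ds=\int_1^t(4\pi s)^{-1}\,ds+O(1)=\tfrac1{2\pi}\log\sqrt t+O(1)$, and combining gives $\tfrac1{2\pi}\log\sqrt t+O(1)$. The two regimes patch into $\tfrac1{2\pi}\log(|x|\wedge\sqrt t)+O(1)$.

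For the torus, periodisation gives $p_t^L(x)=\sum_{y\in\Z^d}p_t(x+Ly)$ and the same with $\nabla^\alpha$ applied to both sides; for $|x|_\infty<L/2$ the $y=0$ term is $\nabla^\alpha p_t(x)$, and for $y\ne0$ one has $|x+Ly|\ge cL|y|$, so \eqref{e:ptbounds} and the elementary lattice-sum bound $\sum_{y\ne0}e^{-cL|y|/\sqrt t}=O(t^{d/2}L^{-d}e^{-c'L/\sqrt t})$ (treating $L\gtrless\sqrt t$ separately) yield the remainder $O_\alpha(t^{-|\alpha|/2}L^{-d}e^{-c'L/\sqrt t})$. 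Finally, since $-\Delta^L$ annihilates constants and $\delta_0$ has orthogonal projection $L^{-d}$ onto the constants, the heat semigroup acting on mean-zero functions has kernel $p_t^{0,L}(x)=p_t^L(x)-L^{-d}=p_t^L(x)-1/L^2$ for $d=2$. The one step requiring genuine care rather than bookkeeping is the uniform off-diagonal bound \eqref{e:ptbounds}: the contour must be shifted so that $\operatorname{Re}\omega$ stays controlled on the shifted torus while the linear phase produces the full decay $e^{-c|x|/\sqrt t}$ — equivalently one invokes the standard Gaussian estimates — after which the convergence and asymptotics of the planar potential kernel and all of the torus reductions are routine.
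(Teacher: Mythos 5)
Your proposal is correct and follows essentially the same route as the paper's proof: Fourier representation with a contour shift of size $O(t^{-1/2})$ for the Gaussian bound \eqref{e:ptbounds}, reduction to the two-dimensional potential kernel asymptotics plus tail estimates split at $|x|\lessgtr\sqrt t$ for \eqref{e:ptasymp}, periodisation with a Gaussian-controlled lattice sum for \eqref{e:pttorus}, and the projection identity for $p_t^{0,L}$. The only cosmetic differences are that you carry out the local Laplace-method computation of $p_t(0)$ explicitly (the paper just calls it standard) and bound the tail $\int_t^\infty(p_s(0)-p_s(x))\,ds$ by $O(|x|^2/t)$ via a second-order Taylor expansion of the phase rather than by $O(|x|/\sqrt t)$ via the $|\alpha|=1$ gradient bound; both give the needed $O(1)$ for $|x|\le\sqrt t$.
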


\begin{proof}
Writing $\alpha_i=e_{j\sigma_j}$ with $j\in\{1,\dots, d\}$ and $\sigma_j\in\{\pm\}$
for each $i \in \{1, \dots, |\alpha|\}$,
the bound \eqref{e:ptbounds} can be seen by writing $\nabla^\alpha p_t(x)$ in its
Fourier representation:
\begin{align} \label{e:ptfourier}
  t^{d/2+|\alpha|/2} \nabla^\alpha p_t(x\sqrt{t})
  &= \frac{1}{(2\pi)^d} \int_{[-\pi,\pi]^d} \prod_{i=1}^{|\alpha|} \sqrt{t}(1-e^{i\sigma_{\alpha_i}k_{\alpha_i}})  
  e^{t \sum_{j = 1}^d (2\cos(k_j)-2)} \, e^{ikx \sqrt{t}}\, t^{d/2} \, dk
  \nnb
  &= \frac{1}{(2\pi)^d} \int_{[-t\pi,t\pi]^d} \prod_{i=1}^{|\alpha|} \sqrt{t}(1-e^{i\sigma_{\alpha_i} k_{\alpha_i}/\sqrt{t}})  e^{t\sum_{j =1}^d (2\cos(k_j/\sqrt{t})-2)} \, e^{ikx} \, dk.
\end{align}
For $t\geq 1$, the integrand is analytic on a strip $k\in (\R+i[-c,c])^d$ with $c>0$
independent of~$t$,
and hence \eqref{e:ptfourier} decays exponentially in $|x|$
(see, e.g., \cite[Chapter I.4, Exercise 4]{MR2039503}).
The first estimate in \eqref{e:ptasymp} is
standard and straightforward to verify by writing the left-hand side in terms of the Fourier transform; we thus omit its proof.
The second estimate in \eqref{e:ptasymp} is similarly standard if $t=\infty$ in which case the left-hand side is the Green function
of the discrete Laplacian:
\begin{equation} \label{e:ptasympinfty}
  \int_0^\infty (p_s(0)-p_s(x)) \, ds
  =
  \frac{1}{2\pi} \log |x| + O(1).
\end{equation}
This estimate can be found, for example, in
\cite[page 198]{MR1175176} or \cite[Theorem~4.4.4]{MR2677157} (with normalisation there differing by a factor $2d=4$).
To prove the second estimate in \eqref{e:ptasymp}
for $0<|x| \leq \sqrt{t}$, we use that by \eqref{e:ptbounds} with $|\alpha|=1$,
\begin{equation}
  \int_t^\infty (p_s(0)-p_s(x)) \, ds
  =
  O(|x|) \int_t^\infty s^{-3/2} \, ds
  = O(|x|/\sqrt{t}),
\end{equation}
which using \eqref{e:ptasympinfty} implies
\begin{equation}
  \int_0^t (p_s(0)-p_s(x)) \, ds =
  \int_0^\infty (p_s(0)-p_s(x)) \, ds  + O(|x|/\sqrt{t})
  = \frac{1}{2\pi} \log |x| + O(1).
\end{equation}
For $|x| \geq \sqrt{t}$, we use that the first bound in \eqref{e:ptasymp}
(and $p_t(0) \leq 1$ for $t<1$) implies
\begin{equation}
  \int_0^t p_s(0) \, ds = \frac{1}{2\pi} \log \sqrt{t} + O(1),
\end{equation}
and hence with \eqref{e:ptbounds} to bound $p_s(x)$,
\begin{equation}
  \int_0^t (p_s(0)-p_s(x)) \, ds
  =
  \frac{1}{2\pi} \log \sqrt{t} + O(1) - \int_1^t O(s^{-1}e^{-c|x|/\sqrt{s}}) \, ds
\end{equation}
where the integral is bounded by a multiple of
\begin{equation}
  \int_1^{t} e^{-|x|/\sqrt{s}} \,\frac{ds}{s}
  = \int_{1/|x|^2}^{t/|x|^2} e^{-1/\sqrt{s}} \,\frac{ds}{s}
  \leq \int_{0}^{1} e^{-1/\sqrt{s}} \,\frac{ds}{s} = O(1).
\end{equation}
This completes the proof of  \eqref{e:ptasymp}.

For the torus of side length $L$, we use that $p_t^L(x)= \sum_{y\in \Z^d} p_t(x+Ly)$
and set $|x|_L = \inf_{y\in \Z^d} |x+Ly|$. Then
\begin{equation}
  \sum_{y\in \Z^d} e^{-c|x+Ly|/\sqrt{t}} = e^{-c|x|_L/\sqrt{t}} + O((\sqrt{t}/L)^d e^{-\frac12 cL/\sqrt{t}}),
\end{equation}
since the remainder between the left-hand side and the first term on the right-hand side of the last equation
can be controlled by (approximating the sum by an integral and using polar coordinates)
\begin{equation}
  \int_1^\infty e^{-crL/\sqrt{t}}r^{d-1} \, dr
  \leq e^{-\frac12 c L/\sqrt{t}}\int_1^\infty e^{-\frac12 c rL/\sqrt{t}}r^{d-1} \, dr
  \leq e^{-\frac12 c L/\sqrt{t}}  (\sqrt{t}/L)^d \int_1^\infty e^{-\frac12 c r}r^{d-1} \, dr
  .
\end{equation}
This shows the estimates \eqref{e:pttorus}.

The expression for the mean $0$ heat kernel follows
from $p_t^{0,L}(x) = (\delta_0, Pe^{\Delta t} P\delta_x) = (\delta_0 - 1/L^2, e^{\Delta t} (\delta_x-1/L^2))
= p_t^L(x)-2/L^2+1/L^2 = p_t^L(x)-1/L^2$ with the projection $P$ from \eqref{e:Cinfty0}.
\end{proof}

\subsection{Proof of Lemma~\ref{lem:dotC}}

We recall the definition $\dot C_t(x) = p_t^{L_\epsilon}(x) e^{ - \epsilon^2m^2t} = p_t^{L_\epsilon}(x)  \vartheta_t^2$.
Lemma~\ref{lem:dotC} is an elementary combination of the estimates from Lemma~\ref{lem:pt},
whose details are given as follows.

\begin{proof}[Proof of Lemma~\ref{lem:dotC}]
  Applying \eqref{e:ptbounds} and \eqref{e:pttorus} with $x=0$
  to the torus of side length $L_\epsilon = L/\epsilon$ and, for $t \geq 1$, we have
  \begin{equation}
    |p_t(0)-p_t^{L_\epsilon}(0)|
    \lesssim L_\epsilon^{-d}e^{-cL_\epsilon/\sqrt{t}},
    \qquad
    p_t^{L_\epsilon}(0) \lesssim t^{-d/2} \vee L_\epsilon^{-d}.
  \end{equation}
  By the assumption \eqref{e:assLm}, either $t \leq 1/\epsilon^2m^2$ or $Lm\geq 1$ holds.
  By the above bound, if $Lm \geq 1$,
  the contribution to $C_t(0)$ from $t \geq 1/\epsilon^2m^2$ is negligible since
  \begin{align}
    \int_{1/\epsilon^2m^2}^\infty p_t^{L_\epsilon}(0) \, e^{-\epsilon^2m^2 t}\, dt
    &\lesssim
    \int_{1/\epsilon^2m^2}^\infty (t^{-1} \vee \epsilon^2 L^{-2}) \, e^{-\epsilon^2m^2 t} \, dt
    \nnb
    &\lesssim
    \epsilon^2m^2 \int_{1/\epsilon^2m^2}^\infty \, e^{-\epsilon^2m^2 t} \, dt
    \lesssim 1.
  \end{align}
  For $t \leq L^{2}/\epsilon^2$ (and thus for $t \leq 1/m^2\epsilon^2$ when $Lm \geq 1$),
  we may moreover replace $p_t^{L_\epsilon}$ by $p_t$ since
  \begin{equation}
    \int_0^{t} (p_s(0)-p_s^{L_\epsilon}(0)) \, ds
    = O(L_\epsilon^{-2}t)
    = O(1).
  \end{equation}
  Finally, the contribution to $\dot C_t(0)$ from the infinite volume heat kernel $p_t(0)$ is
  \begin{equation}
    p_t(0) e^{-\epsilon^2m^2 t}
    = [\frac{1}{4\pi t} + O(\frac{1}{t^2})] e^{-\epsilon^2m^2 t}
    = \frac{1}{4\pi t} + O(\frac{1}{t^2}) + O(\epsilon^2m^2 t),
  \end{equation}
  which integrated up to $t \leq 1/\epsilon^2m^2$ gives the main contribution
  \begin{equation}
    C_t(0) = \int_0^t p_s(0) e^{-\epsilon^2m^2 s} \, ds + O(1)
    = \frac{1}{4\pi} \log t + O(1)
    = \frac{1}{2\pi} \log \ell_t + O(1).
  \end{equation}
  This shows the first estimate in \eqref{e:Cs-diag}. The second estimate is straightforward 
  since $\lCdot_s(x,y) = \lCdot_s(0,x-y) \geq 0$ and the fact that the heat kernel defines a probability density immediately imply
  \begin{equation}
    \sup_x \sum_y \lCdot_t(x,y) =\ell_t^2  \vartheta_t^2 \sum_{y\in\Lambda} p_t^L(y)
    = \ell_t^2\vartheta_t^2 \sum_{y\in\Z^2} p_t(y)  =\ell_t^2 \vartheta_t^2. 
  \end{equation}

  Finally, in the conservative case the estimates are unchanged since
  \begin{equation}
    C_t^0(0,0) = C_t(0,0) - \frac{1}{|\Lambda|} \int_0^t e^{-\epsilon^2m^2 s}\, ds
    = C_t(0,0) - \frac{1-e^{-\epsilon^2m^2 t}}{L^2m^2}
    = C_t(0,0) + O(1)
  \end{equation}
  and
  \begin{equation}
    \sum_x |\lCdot_t^0(0,x)| \leq \sum_x (\lCdot_t(0,x) + \frac{\ell_t^2\vartheta_t^2}{|\Lambda|}) = O(\ell_t^2\vartheta_t^2).\qedhere
  \end{equation}
\end{proof}

\subsection{Proof of Lemmas~\ref{lem:dotC-more}-\ref{lem:dotC-more2}}

To prepare for the proofs of the lemmas,
we state the following consequences of Lemma~\ref{lem:pt}
in the notation used in the lemmas.
In particular, recall \eqref{e:delta12-def}-\eqref{e:delta1234-def}.
For $x\in\Lambda$, abusing notation slightly, we write $|x|$ for the
torus distance $|x|_{L_\epsilon} = \inf_{y\in\Z^d} |x+L_\epsilon y|$.
In particular, $|x| = O(L_\epsilon)$ for all $x\in\Lambda$.
Moreover, in all of the following lemmas, we impose the assumption \eqref{e:assLm}
without stating it explicitly.

\begin{lemma} \label{lem:pt-delta}
  The following estimates hold for $\lCdot_t$, $C_t$ for $t\geq 1$ and $|x-y| \geq 1$:
  \begin{equation}
    \label{e:Cxy}
    C_t(x,y) = -\frac{1}{2\pi}\log(|x-y|/\ell_t \wedge 1) + O(1),
    \qquad
    |\lCdot_t(x,y)| \lesssim \vartheta_t^2 e^{-c|x-y|/\ell_t}.
  \end{equation}
  The first bounds also implies that
  \begin{equation}
    \label{e:Cxy-cont}
    C_t(x,y) = \int_1^t \frac{1}{4\pi s} e^{-|x-y|^2/2s} e^{-\epsilon^2m^2 s} \, ds + O(1).
  \end{equation}
  For any $c'>0$ small enough,
  \begin{align}
    \label{e:pt-delta1}
    |\delta_{12}\lCdot_t(x,y,z)| e^{-c'|x-y|/\ell_t} &\lesssim \vartheta_t^2 (|x-y|/\ell_t) 
    e^{-c'|x-z|/2\ell_t}e^{-c' |y-z|/2\ell_t}
    \\
    \label{e:pt-delta2}
    |\delta_{34}\delta_{12}\lCdot_t(x,y,w,z)| e^{-c'|x-y|/\ell_t}e^{-c'|w-z|/\ell_t} 
    &\lesssim \vartheta_t^2 (|x-y|/\ell_t)(|w-z|/\ell_t) e^{-c' |x-w|/\ell_t}.
  \end{align}
  The same estimates hold with $\lCdot_t$ replaced by $\ell_t\vartheta_t \lQ_t$,
  and if $\lCdot_t$ and $\lQ_t$ are replaced by $\lCdot_t^0$ and $\lQ_t^0$.
\end{lemma}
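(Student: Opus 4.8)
The plan is to reduce all four assertions to the infinite--volume lattice heat kernel $p_t$ on $\Z^2$ and then quote Lemma~\ref{lem:pt}. Recall $\uCdot_s(x,y)=p_s^{L_\epsilon}(x-y)\vartheta_s^2$ and $C_t=\int_0^t\uCdot_s\,ds$. Exactly as in the proof of Lemma~\ref{lem:dotC} I would split $p_s^{L_\epsilon}=p_s+(p_s^{L_\epsilon}-p_s)$: under \eqref{e:assLm} one has $\sqrt s\lesssim L_\epsilon$ throughout the relevant range of $s$, so by \eqref{e:pttorus} the torus remainder --- with any fixed number of lattice derivatives applied --- is dominated by the corresponding infinite--volume term and only produces an additive $O(1)$ in $C_t$, respectively an error of the same shape as the asserted bound in $\lCdot_t$ and its differences. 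This leaves a purely $\Z^2$ computation, fed by the Gaussian/gradient bounds $|\nabla^\alpha p_t(z)|\lesssim t^{-1-|\alpha|/2}e^{-c|z|/\sqrt t}$ and the logarithmic integral asymptotics, both from Lemma~\ref{lem:pt}.

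For \eqref{e:Cxy} and \eqref{e:Cxy-cont} I would write $C_t(x,y)=\int_0^t p_s(x-y)\vartheta_s^2\,ds+O(1)$ and insert $p_s(z)=\tfrac1{4\pi s}e^{-c|z|^2/s}(1+O(s^{-1}))$ for $s\ge1$ (with $p_s\le 1$ for $s<1$); this gives \eqref{e:Cxy-cont} at once, and carrying out the elementary $ds/s$--integral --- in which the weight $\vartheta_s^2$ acts as a smooth cutoff at scale $s\sim(\epsilon m)^{-2}$, just as in the computation of $C_t(0,0)$ inside the proof of Lemma~\ref{lem:dotC} --- yields $C_t(x,y)=\tfrac1{2\pi}\log(\ell_t/|x-y|)+O(1)$ when $|x-y|\le\ell_t$ and $C_t(x,y)=O(1)$ when $|x-y|\ge\ell_t$ (the integrand being $\lesssim s^{-1}e^{-c|x-y|^2/s}$ with $|x-y|^2\ge s$ throughout in the latter case), i.e.\ \eqref{e:Cxy}. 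The pointwise bound $|\lCdot_t(x,y)|\lesssim\vartheta_t^2 e^{-c|x-y|/\ell_t}$ then follows from $\lCdot_t(x,y)=\ell_t^2\vartheta_t^2 p_t^{L_\epsilon}(x-y)$, $\ell_t^2\le t$ and $p_t(z)\lesssim t^{-1}e^{-c|z|/\sqrt t}$ --- directly in the regime $\ell_t=\sqrt t$, where $\ell_t^2 t^{-1}e^{-c|z|/\sqrt t}=e^{-c|z|/\ell_t}$, and from the diagonal bound $\sup_y\lCdot_t(0,y)\lesssim\vartheta_t^2$ of Lemma~\ref{lem:dotC} once $\ell_t$ has saturated.

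The difference estimates \eqref{e:pt-delta1}--\eqref{e:pt-delta2} are the substantive part, and I would prove them by telescoping along shortest lattice paths. For a neighbour increment, $|\lCdot_t(x+e,z)-\lCdot_t(x,z)|=\ell_t^2\vartheta_t^2|\nabla p_t^{L_\epsilon}(x-z)|\lesssim\vartheta_t^2\ell_t^{-1}e^{-c|x-z|/\ell_t}$ by \eqref{e:ptbounds} with $|\alpha|=1$ (again absorbing the torus term); chaining $|x_1-x_2|$ such increments along a geodesic $w_0=x_2,\dots,w_{|x_1-x_2|}=x_1$ gives
\begin{equation*}
  |\delta_{12}\lCdot_t(x_1,x_2,x_3)|\ \lesssim\ \frac{|x_1-x_2|}{\ell_t}\,\vartheta_t^2\,\max_j e^{-c|w_j-x_3|/\ell_t}.
\end{equation*}
On a geodesic $|w_j-x_3|\ge\tfrac12(|x_1-x_3|+|x_2-x_3|-|x_1-x_2|)$, so the right side is at most $\tfrac{|x_1-x_2|}{\ell_t}\vartheta_t^2\, e^{c|x_1-x_2|/2\ell_t}e^{-c(|x_1-x_3|+|x_2-x_3|)/2\ell_t}$; multiplying by the ``room'' factor $e^{-c'|x_1-x_2|/\ell_t}$ with $c'\le c/2$ absorbs the growing exponential (with $|x_1-x_2|\le|x_1-x_3|+|x_2-x_3|$ covering the remaining slack for smaller $c'$), giving \eqref{e:pt-delta1}. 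Estimate \eqref{e:pt-delta2} comes out of the same mechanism applied twice, once in the $\{x_1,x_2\}$ slot and once in the $\{x_3,x_4\}$ slot: two applications of \eqref{e:ptbounds} with $|\alpha|=2$ produce $(|x_1-x_2|/\ell_t)(|x_3-x_4|/\ell_t)$, and $|w_j-v_l|\ge|x_1-x_3|-|x_1-x_2|-|x_3-x_4|$ on the two geodesics yields the separation decay $e^{-c|x_1-x_3|/\ell_t}$, the two excursions being absorbed by $e^{-c'|x_1-x_2|/\ell_t}e^{-c'|x_3-x_4|/\ell_t}$.

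Finally, the variants: $\ell_t\vartheta_t\lQ_t(x,y)=\tfrac{\ell_t^2\vartheta_t^2}{\ell_{t/2}^2\vartheta_{t/2}^2}\,\lCdot_{t/2}(x,y)$ with $\ell_t\asymp\ell_{t/2}$ and $\vartheta_{t/2}^2\ge\vartheta_t^2$, so the $\lQ_t$ statements are the $\lCdot$ statements at time $t/2$; and the conservative ones follow from $\uCdot_t^0(x,y)=\uCdot_t(x,y)-\vartheta_t^2/L_\epsilon^2$ and $C_t^0(x,y)=C_t(x,y)-(1-\vartheta_t^2)/(m^2L^2)$, the additive corrections being $O(1)$--size under \eqref{e:assLm}, harmless in the diagonal estimates and cancelling exactly in $\delta_{12}$ and $\delta_{34}\delta_{12}$. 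The one point that needs care throughout is the bookkeeping of the exponential weights: I must verify that the exponential gained along a geodesic excursion is dominated by the room factor on the left of \eqref{e:pt-delta1}--\eqref{e:pt-delta2}, which is precisely why the admissible $c'$ is pinned to a small fixed constant tied to the decay rate $c$ in \eqref{e:ptbounds}; everything else is routine manipulation of the bounds in Lemma~\ref{lem:pt}.
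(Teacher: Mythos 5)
Your proposal follows essentially the same route as the paper: reduction to the infinite-volume kernel via \eqref{e:pttorus}, the logarithmic integral asymptotics for \eqref{e:Cxy}--\eqref{e:Cxy-cont}, and telescoping along lattice geodesics with the triangle-inequality bookkeeping $-|u-z|\lesssim |x-y|-\tfrac12|x-z|-\tfrac12|y-z|$ to trade the path excursion against the room factors $e^{-c'|x-y|/\ell_t}$; your explicit identities $\ell_t\vartheta_t\lQ_t\asymp\lCdot_{t/2}$ and $\uCdot_t^0=\uCdot_t-\vartheta_t^2/L_\epsilon^2$ for the variants are correct and make explicit what the paper leaves implicit. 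The one imprecise step is the pointwise bound $|\lCdot_t(x,y)|\lesssim\vartheta_t^2e^{-c|x-y|/\ell_t}$ once $\ell_t$ saturates at $1/(\epsilon m)<\sqrt t$: falling back to the diagonal bound $\sup_y\lCdot_t(0,y)\lesssim\vartheta_t^2$ discards the exponential factor altogether, whereas you should simply keep $\ell_t^2\vartheta_t^2 p_t^{L_\epsilon}(x-y)\lesssim\vartheta_t^2e^{-c|x-y|/\sqrt t}$ from \eqref{e:ptbounds} in all regimes (this is in fact the form the paper itself proves and uses downstream, e.g.\ in \eqref{e:Usumx}).
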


\begin{proof}
  The estimates \eqref{e:Cxy} follow easily from those for the heat kernel
  in \eqref{e:ptbounds}-\eqref{e:pttorus}.
  Indeed, the second bound in \eqref{e:Cxy} is a special case of \eqref{e:ptbounds} and 
  \eqref{e:pttorus}:
  \begin{equation}
    \lCdot_t (x,y) = \ell_t^2 \vartheta_t^2 p_t^{L_\epsilon}(x,y)
    \lesssim \ell_t^2 \vartheta_t^2 \left( \frac{1}{t} e^{-c|x-y|/\sqrt{t}}
      + \frac{1}{L_\epsilon^2} e^{-c L_\epsilon/\sqrt{t}} \right)
    \lesssim \vartheta_t^2e^{-c|x-y|/\sqrt{t}},
  \end{equation}
  where in the last inequality we used that $\ell_t/L_\epsilon \leq 1$ follows from
  \eqref{e:assLm} and the definition of $\ell_t$ in \eqref{e:elldef}.
  Indeed, by \eqref{e:assLm}, either $t \leq L_\epsilon^2$ which implies
  $\ell_t \leq L_\epsilon$, or otherwise $Lm\geq 1$ and then also
  $\ell_t/L_\epsilon
  = (\sqrt{t} \wedge 1/(\epsilon m))/(L/\epsilon)
  \leq \sqrt{\epsilon^2 m^2 t} \wedge 1 \leq 1$.
  
  For the first bound in \eqref{e:Cxy} we note that \eqref{e:ptasymp} implies
  \begin{equation}
    \int_0^t p_s(x) \, ds
    = \frac{1}{2\pi} \qa{\log \sqrt{t} - \log(|x|\wedge \sqrt{t})} + O(1)
    = -\frac{1}{2\pi} \log(|x|/\sqrt{t}\wedge 1) + O(1).
  \end{equation}
  The additional factor $e^{-\epsilon^2m^2 s}$ multiplying $p_s(x)$ leads to the replacement of $\sqrt{t}$ by $\ell_t$ exactly as in the proof of \eqref{e:Cs-diag}.
  By an analogous calculation, the same formula holds with the discrete heat kernel replaced by the continuous one, i.e.,
  \begin{equation}
    \int_1^t \frac{1}{4\pi s} e^{-|x|^2/2s} \, ds
    = -\frac{1}{2\pi} \log(|x|/\sqrt{t}\wedge 1) + O(1),
  \end{equation}
  from which \eqref{e:Cxy-cont} follows after taking into account the additional factor $e^{-\epsilon^2m^2 s}$ as before.

  To verify \eqref{e:pt-delta1}-\eqref{e:pt-delta2},
  for $x,y\in \Z^d$, let $\gamma_{xy}$ be a path from $x$ to $y$ of length $|x-y|$
  where $|x|$ denotes the $1$-norm in this proof. Then \eqref{e:ptbounds} and \eqref{e:pttorus} imply
  \begin{align}
    |\delta_{12}p_t^{L_\epsilon}(x,y,z)|
    = |p_t^{L_\epsilon}(x,z)-p_t^{L_\epsilon}(y,z)|
    &\leq \sum_{u \in \gamma_{xy}} |\nabla p_t^{L_\epsilon}(u,z)| 
        \nnb
    &\lesssim \ell_t^{-3} \sum_{u \in \gamma_{xy}} e^{-c|u-z|/\ell_t}
      .
  \end{align}
  For $u \in \gamma_{xy}$, we have $|x-z|\leq  |x-u| + |u-z| \leq |x-y|+ |u-z|$,
  and we deduce from the symmetric estimate in $y$ that $- |u-z| \leq - |x-y| - |x-z|/2 - |y-z|/2$.
  Choosing $c'< c$, we get
  \begin{equation}
    |\delta_{12}p_t^{L_\epsilon}(x,y,z)|
    \lesssim  \ell_t^{-2} (|x-y|/\ell_t) e^{-c'|x-z|/2\ell_t} e^{-c'|y-z|/2\ell_t} e^{+c'|x-y|/\ell_t}.
  \end{equation}
  This completes \eqref{e:pt-delta1}.
  Analogously, again applying \eqref{e:ptbounds} and \eqref{e:pttorus} and choosing $c'<c$, we get
  \begin{align}
    |\delta_{34}\delta_{12}p_t^{L_\epsilon}(x,y,w,z)|
    &\leq \sum_{u\in \gamma_{xy}}\sum_{v \in \gamma_{wz}} |\nabla^2 p_t^{L_\epsilon}(u-v)|
      \nnb
    &\lesssim \ell_t^{-4} \sum_{u \in \gamma_{xy}} \sum_{v\in \gamma_{wz}} e^{-c|u-v|/\ell_t}
    \nnb
    &\lesssim  \ell_t^{-2} (|x-y|/\ell_t)(|w-z|/\ell_t) e^{-c' |x-w|/\ell_t} e^{+ c'  |x-y|/\ell_t}e^{+ c' |w-z|/\ell_t}
  \end{align}
  using that
  $|x-w| \leq |x-u| + |u-v| + |v-w| \leq |x-y| + |u-v|+|w-z|$.
\end{proof}

\begin{lemma} \label{lem:Ctsdiffbd}
For all $x,y,z \in\Lambda$, $0\leq s\leq t$,
\begin{equation} \label{e:Ctsdiffbd-bis}
  (C_t-C_s)(0,0) - (C_t-C_s)(x,y) + (C_t-C_s)(x,z) - (C_t-C_s)(y,z)
  \geq
  - O(1).
\end{equation}
\end{lemma}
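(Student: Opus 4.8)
The plan is to reduce \eqref{e:Ctsdiffbd-bis} to an approximate subadditivity property of a single function of the torus distance, exploiting the logarithmic asymptotics of $C_t$ recorded in \eqref{e:Cs-diag} and \eqref{e:Cxy}. Throughout, $|x-y|$ denotes the torus distance, as in \eqref{e:Cxy}. First I would dispose of the degenerate regimes: if $\epsilon m \geq 1$ then $\ell_r \leq 1$ for all $r$, and if $t \leq 1$ then $\ell_r \leq 1$ for $r \leq t$, so in either case $|C_t(0,0)| = O(1)$ by \eqref{e:Cs-diag}; since $0 \leq (C_t-C_s)(x,y) \leq (C_t-C_s)(0,0) \leq C_t(0,0)$ for all $x,y$ (because $\dot C_r \geq 0$ and the lattice heat kernel is maximised on the diagonal), the left-hand side of \eqref{e:Ctsdiffbd-bis} is then $O(1)$ and there is nothing to prove. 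Likewise, when $s \leq 1$, replacing $C_s$ by $C_1$ changes each term $(C_t-C_s)(x,y)$ by $(C_1-C_s)(x,y) \in [0, C_1(0,0)] = [0, O(1)]$, so I may also assume $s \geq 1$. Under these reductions $1 \leq \ell_s \leq \ell_t$.

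Next, by \eqref{e:Cxy} (for $|x-y| \geq 1$) and \eqref{e:Cs-diag} (for $x=y$), I would record the uniform identity
\begin{equation}
  (C_t-C_s)(x,y) = (C_t-C_s)(0,0) - \frac{1}{2\pi} G(|x-y|) + O(1),
  \qquad
  G(\rho) := \log\frac{(\rho\vee\ell_s)\wedge\ell_t}{\ell_s} \geq 0 .
\end{equation}
Substituting this into the left-hand side of \eqref{e:Ctsdiffbd-bis}, the four copies of $(C_t-C_s)(0,0)$ enter with signs $+,-,+,-$ and cancel, so that left-hand side equals $\frac{1}{2\pi}\bigl(G(|x-y|) + G(|y-z|) - G(|x-z|)\bigr) + O(1)$.

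It then suffices to show that $G$ is nondecreasing and approximately subadditive, i.e.\ $G(\rho_1)+G(\rho_2) \geq G(\rho_1+\rho_2) - \log 2$ for all $\rho_1,\rho_2 \geq 0$. Monotonicity is immediate from the definition. For the near-subadditivity, assume $\rho_1 \geq \rho_2$; the case $\rho_1+\rho_2 \leq \ell_s$ is trivial since then $G(\rho_1+\rho_2)=0$, and otherwise $(\rho_1+\rho_2)\wedge\ell_t \leq (2\rho_1)\wedge\ell_t \leq 2\bigl((\rho_1\vee\ell_s)\wedge\ell_t\bigr)$, giving $G(\rho_1+\rho_2) \leq \log 2 + G(\rho_1) \leq \log 2 + G(\rho_1)+G(\rho_2)$. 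Combining this with monotonicity and the triangle inequality $|x-z| \leq |x-y|+|y-z|$ for the torus distance yields $G(|x-z|) \leq G(|x-y|)+G(|y-z|)+\log 2$, hence $G(|x-y|)+G(|y-z|)-G(|x-z|) \geq -\log 2$, and \eqref{e:Ctsdiffbd-bis} follows.

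Finally, the conservative case is immediate from the main case: by the computation in the proof of Lemma~\ref{lem:dotC}, $C_t^0(x,y) = C_t(x,y) - \frac{1-e^{-\epsilon^2m^2 t}}{L^2m^2}$ differs from $C_t(x,y)$ by a quantity independent of $x,y$, and these shifts cancel in the alternating combination on the left-hand side of \eqref{e:Ctsdiffbd-bis}, reducing it to the case already treated. I expect no real obstacle in this argument; the only mildly delicate point is the bookkeeping around the definition of $\ell_r$ in the degenerate regimes, whereas the core estimate — near-subadditivity of the truncated logarithm $G$ — is elementary.
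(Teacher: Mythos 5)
Your proof is correct, but it takes a genuinely different route from the paper's. The paper argues by a case analysis on which of the three pairs $\{x,y\}$, $\{y,z\}$, $\{x,z\}$ is closest: when $|x-y|$ (or $|y-z|$) is smallest it shows $|(C_t-C_s)(x,z)-(C_t-C_s)(y,z)|\lesssim 1$ directly from the heat-kernel bounds \eqref{e:ptbounds}--\eqref{e:pttorus}, splitting the time integral at $u=|x-y|^2$ (zeroth-order bound with exponential decay below, gradient bound above); when $|x-z|$ is smallest it uses the continuum representation \eqref{e:Cxy-cont} and the monotonicity $e^{-|x-z|^2/2u}\geq e^{-|y-z|^2/2u}$ to get a one-sided bound. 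In each case the remaining piece $(C_t-C_s)(0,0)-(C_t-C_s)(x,y)\geq 0$ is handled by positive definiteness, exactly as in your degenerate-regime reduction. You instead convert the already-established asymptotics \eqref{e:Cs-diag} and \eqref{e:Cxy} into the uniform identity $(C_t-C_s)(x,y)=(C_t-C_s)(0,0)-\tfrac{1}{2\pi}G(|x-y|)+O(1)$ with the truncated logarithm $G$, so that the four diagonal terms cancel and the whole inequality reduces to monotonicity plus near-subadditivity of $G$ — an elementary property of $\log$. Your route is more conceptual and avoids any case analysis, at the cost of needing the two-sided, uniform additive-constant precision of \eqref{e:Cxy} (which is available, and proved earlier in the appendix, so there is no circularity); the paper's route works directly from the raw kernel bounds and does not need the sharp logarithmic asymptotics off the diagonal. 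Your handling of the degenerate regimes ($t\leq 1$, $s\leq 1$, $\epsilon m\geq 1$) and of the conservative case (the constant shift $C_t^0(x,y)-C_t(x,y)$ cancels in the alternating sum) is also sound.
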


\begin{proof}
It suffices to assume that $s\geq 1$.
Throughout this proof, $|x|$ denotes the Euclidean norm.
Suppose first that $|x-y| \leq |x-z| \wedge |y-z|$. We will show that
\begin{equation}
\label{eq: A28 difference}
  |(C_t-C_s)(x,z)-(C_t-C_s)(y,z)|
  \leq
  \int_s^t |\uCdot_u(x,z)-\uCdot_u(y,z)| \, du
  \lesssim 1.
\end{equation}
Indeed, this bound follows from the following two estimates:
using \eqref{e:ptbounds} with $|\alpha|=0$ for the first bound and with $|\alpha|=1$ for the second bound,
and also \eqref{e:pttorus} for the error due to periodicity,
\begin{align}
  \int_s^{|x-y|^2} (|\uCdot_u(x,z)|+|\uCdot_u(y,z)|) \, du
  &\lesssim
  1+ \int_s^{|x-y|^2} u^{-1} e^{-c|x-y|/\sqrt{u}} \, du \lesssim 1
  \\
  \int_{|x-y|^2}^{t} |\uCdot_u(x,z)-\uCdot_u(y,z)| \, du
  &\lesssim
  1+ |x-y| \int_{|x-y|^2}^t u^{-3/2} \, du \lesssim 1.
\end{align}
Here we have used that the remainder in \eqref{e:pttorus} due to the periodicity is bounded by
\begin{equation}
\frac{|x-y|}{L_\epsilon^2} \int_{|x-y|^2}^t u^{-1/2}   e^{-cL_\epsilon/\sqrt{u} - \epsilon^2m^2 u } 
\lesssim 1+ \frac{|x-y|}{L_\epsilon^2} \int_{|x-y|^2}^{\epsilon^{-2} m^{-2}} u^{-1/2}   e^{-cL_\epsilon/\sqrt{u} }
\lesssim 1
\end{equation}
when $Lm\geq 1$, and that an analogous bound holds when instead $t \leq \epsilon^{-2}(m^{-2}\wedge L^2)$.
The bound \eqref{e:Ctsdiffbd-bis} then follows from \eqref{eq: A28 difference} and
$(C_t-C_s)(0,0)- (C_t-C_s)(x,y) \geq 0$ which holds by the positive definiteness of $C_t-C_s$
and translation invariance.

The same argument as above also applies if $|y-z| \leq |x-z| \wedge |x-y|$.
Therefore suppose that $|x-z| \leq |x-y| \wedge |y-z|$.
From \eqref{e:Cxy-cont} recall that
\begin{equation}
  C_t(x,z) = \int_1^t \frac{1}{4\pi u} e^{-|x-z|^2/2u} e^{-\epsilon^2m^2 u} \, du + O(1).
\end{equation}
Since $e^{-|x-z|^2/2u} \geq e^{-|y-z|^2/2u}$ therefore
\begin{equation}
  (C_t-C_s)(x,z) - (C_t-C_s)(y,z)
  \geq -O(1).
\end{equation}
The conclusion \eqref{e:Ctsdiffbd-bis} now again follows from $(C_t-C_s)(0,0)- (C_t-C_s)(x,y) \geq 0$.
\end{proof}

\begin{lemma}
  Let $U_t(x)=e^{\beta C_t(0,x)} - 1$. Then for  $\beta < 2\pi(k+2)$ and sufficiently small $c' >0$,
  \begin{equation}
    \label{e:Usumx}
    \sum_{x} |U_t(x)| (|x|/\ell_t)^k e^{c' |x|/\sqrt{t}}
    \lesssim
    \ell_t^{2}
    .
  \end{equation}
  The analogous estimate holds in the conservative case.
\end{lemma}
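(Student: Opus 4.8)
The plan is to split the sum in \eqref{e:Usumx} at the characteristic length scale $\ell_t$ of \eqref{e:elldef}. I would first reduce to $t\ge 1$ (as in the proof of Lemma~\ref{lem:Ctsdiffbd}); then $\ell_t=\sqrt t\wedge(\epsilon m)^{-1}\le\sqrt t$, so $e^{c'|x|/\sqrt t}\le e^{c'|x|/\ell_t}$ for all $x$ and it suffices to prove \eqref{e:Usumx} with $\sqrt t$ replaced by $\ell_t$. The proof rests on two pointwise estimates for $U_t$, both consequences of Lemma~\ref{lem:pt} via \eqref{e:Cs-diag}-\eqref{e:Cxy}. Since $C_t(0,x)=\int_0^t\dot C_s(0,x)\,ds\ge 0$, one has $|U_t(x)|=e^{\beta C_t(0,x)}-1\le e^{\beta C_t(0,x)}$; combined with $C_t(0,x)=\frac{1}{2\pi}\log(\ell_t/(|x|\vee 1))+O(1)$ on $|x|\le\ell_t$ (which is \eqref{e:Cxy} for $|x|\ge1$ and \eqref{e:Cs-diag} for $x=0$), this gives $|U_t(x)|\le C_\beta\,(\ell_t/(|x|\vee 1))^{\beta/2\pi}$ there. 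Second, integrating the Gaussian decay of $\dot C_s$ from the second bound in \eqref{e:Cxy} over the scales $s\le t$ gives $C_t(0,x)\le C\,e^{-c_0|x|/\ell_t}=O(1)$ for $|x|\ge\ell_t$ and a fixed $c_0>0$, so the inequality $e^y-1\le y e^y$ yields $|U_t(x)|\le\beta C_t(0,x)e^{\beta C_t(0,x)}\le C_\beta\,e^{-c_0|x|/\ell_t}$ for $|x|\ge\ell_t$. I would then fix $c'\in(0,c_0)$.

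With these inputs I would split $\sum_{x\in\Lambda}$ into $x=0$, $1\le|x|\le\ell_t$, and $|x|>\ell_t$. The term $x=0$ is present only for $k=0$, where it equals $|U_t(0)|\le C_\beta\ell_t^{\beta/2\pi}\le C_\beta\ell_t^2$, using $\beta<2\pi(k+2)=4\pi$ and $\ell_t\ge 1$. For $1\le|x|\le\ell_t$ the weight $e^{c'|x|/\ell_t}$ is $O(1)$ and the summand is $\le C_\beta\,|x|^{k-\beta/2\pi}\ell_t^{\beta/2\pi-k}$; since $\Lambda$ is two-dimensional, the sum is comparable to $\ell_t^{\beta/2\pi-k}\int_1^{\ell_t}r^{k+1-\beta/2\pi}\,dr$, and the exponent $k+1-\beta/2\pi$ exceeds $-1$ precisely when $\beta<2\pi(k+2)$, so this integral is $\asymp\ell_t^{k+2-\beta/2\pi}$ and the product is $\le C_\beta\ell_t^2$. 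For $|x|>\ell_t$ the summand is $\le C_\beta\,e^{-(c_0-c')|x|/\ell_t}(|x|/\ell_t)^k\le C_{\beta,k,c'}\,e^{-(c_0-c')|x|/2\ell_t}$, and summing over $\Lambda$ gives $\lesssim\int_{\ell_t}^\infty e^{-(c_0-c')r/2\ell_t}\,r\,dr\le C_{c'}\ell_t^2$. Adding the three contributions proves \eqref{e:Usumx}.

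The conservative case would follow by the same argument, now with $C_t^0(x,y)=C_t(x,y)+O(1)$ uniformly (established in the proof of Lemma~\ref{lem:dotC}): the bounds above hold for $U_t^0$ up to $\beta$-dependent constants on $|x|\le\ell_t$, while on $|x|>\ell_t$ the sole new feature is a constant shift $\delta_t=|\Lambda|^{-1}\int_0^t\vartheta_s^2\,ds\in[0,1]$ in $C_t^0(0,x)$, adding at most $\beta\delta_t$ to $|U_t^0(x)|$; since $\delta_t|\Lambda|=\int_0^t\vartheta_s^2\,ds\le\min(t,(\epsilon m)^{-2})=\ell_t^2$, this extra contribution is of the required order $\ell_t^2$ (for $k=0$ at once, otherwise by a routine variant of the same estimate). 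The one step needing a little care is upgrading the $O(1)$ bound on $C_t(0,x)$ from \eqref{e:Cxy} to genuine exponential decay for $|x|\ge\ell_t$, which is a standard integration of the rescaled heat-kernel Gaussian estimate against the mass cutoff $\vartheta_s^2$; the hypothesis $\beta<2\pi(k+2)$ enters — and is sharp — only through the radial integral over $1\le|x|\le\ell_t$, which for $\beta\ge 2\pi(k+2)$ would instead produce $\ell_t^{\beta/2\pi-k}>\ell_t^2$ (or a logarithm at equality).
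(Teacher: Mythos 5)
Your argument is correct for the main (non-conservative) case and uses the same inputs as the paper's proof --- the two bounds of \eqref{e:Cxy} and the exponent count $k+2-\beta/2\pi>0$ --- but it is organised differently. The paper writes $U_t(x)=\int_0^t \beta\,\dot C_s(0,x)e^{\beta C_s(0,x)}\,ds$, bounds $e^{\beta C_s(0,x)}\lesssim(\ell_s/|x|)^{\beta/2\pi}$, performs the sum over $x$ at each fixed scale $s$ using the Gaussian decay of $\dot C_s$ (this is where $\beta<2\pi(k+2)$ enters, exactly as in your radial integral), and only then integrates over $s$. You instead establish pointwise bounds on $U_t$ at the terminal scale and split the spatial sum at $|x|=\ell_t$. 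The price of your route is the extra step you flag: upgrading $C_t(0,x)=O(1)$ to $C_t(0,x)\lesssim e^{-c_0|x|/\ell_t}$ for $|x|\geq\ell_t$, which the paper never needs because it harvests the exponential decay from $\dot C_s$ scale by scale. That step is indeed routine (e.g.\ $e^{-c|x|/\ell_s}\lesssim(\ell_s/|x|)^2e^{-c'|x|/\ell_s}$ followed by $\int_0^t\vartheta_s^2\,ds\leq\ell_t^2$), so there is no gap in the main argument, and your identification of where the threshold $\beta<2\pi(k+2)$ is sharp matches the paper's.

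Two caveats, largely inherited from the statement itself. First, the weight that is actually used downstream is $e^{c'|x|/\ell_t}$, not $e^{c'|x|/\sqrt t}$; your reduction to $t\geq 1$ followed by replacing $\sqrt t$ with $\ell_t$ proves exactly this stronger version, and for $t\leq 1$ the $\ell_t$-weighted bound follows from monotonicity of $U_t$ in $t$ (the $\sqrt t$-weighted statement is genuinely delicate for small $t$, so it is good that you switched weights before splitting). Second, in the conservative case your claim that the contribution of the constant shift $\delta_t=|\Lambda|^{-1}\int_0^t\vartheta_s^2\,ds$ is handled "by a routine variant" for $k\geq 1$ is not justified: since $|U_t^0(x)|\asymp\beta\delta_t$ for $x$ far from the origin, one gets $\sum_x\delta_t(|x|/\ell_t)^k\asymp\delta_t|\Lambda|(L_\epsilon/\ell_t)^k\asymp\ell_t^2(L_\epsilon/\ell_t)^k$, which exceeds $\ell_t^2$ whenever $k\geq 1$ and the torus is much larger than $\ell_t$. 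So the literal conservative statement with the polynomial weight fails for $k\geq 1$ and needs to be reformulated (or the constant mode treated separately in the applications). The paper's one-line "the bounds are the same in the conservative case" glosses over the identical point, so this is a shared weakness rather than a defect specific to your proof; but you should not present it as routine.
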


\begin{proof}
  By \eqref{e:Cxy}, $C_s(0,x) = - \frac{1}{2\pi} \log(|x|/\ell_s \wedge 1) + O(1)$
  and $|\lCdot_s(0,x)| \lesssim \vartheta_s^2 e^{-c|x|/\sqrt{s}}$. Therefore
  \begin{align}
    |U_t(x)|
    = |e^{\beta C_t(0,x)}-1|
    &\leq
    \int_0^t \beta |\lCdot_s(0,x)| e^{\beta C_s(0,x)} \, \lds
    \nnb
    &\lesssim
    \int_0^t \pbb{ \ell_s^{\beta/2\pi} |x|^{-\beta/2\pi} e^{-c|x|/\sqrt{s}} e^{-\epsilon^2m^2 s} } \, \lds
    \label{e:Cbd1bdpf1}
    .
  \end{align}
  Choosing $c'< c/2$, we get
  $e^{c' |x|/\sqrt{t}} e^{-c|x|/\sqrt{s}} \leq e^{-\frac12 c|x|/\sqrt{s}}$ for $t\geq s$.
  Furthermore
  \begin{equation}
    \sum_x |x|^{k-\beta/2\pi} e^{-\frac12 c|x|/\sqrt{s}} \lesssim \sqrt{s}^{2+k-\beta/2\pi}
  \end{equation}
  holds if $2+k>\beta/2\pi$ and $s \geq 1$. Therefore
  \begin{equation}
    \sum_x |U_t(x)| (|x|/\ell_t)^k e^{c' |x|/\sqrt{t}}
    \lesssim
    \ell_t^{-k} \int_0^t \pbb{ \sqrt{s}^{2+k} e^{-\epsilon^2m^2 s} } \, \lds
    \lesssim \ell_t^2.
  \end{equation}
  The bounds are the same in the conservative case.
\end{proof}

With the above preparation, we now prove Lemmas~\ref{lem:dotC-more}-\ref{lem:dotC-more2}.

\begin{proof}[Proof of \eqref{e:thresh1-Cbd1}]
  For \eqref{e:thresh1-Cbd1}, we use $C_t(0,x)\geq 0$ which with $1-e^{-x} \leq x$ for $x\geq 0$ gives the claim
  \begin{equation}
    \sum_{x} |1-e^{-C_t(0,x)}|
    = \sum_{x} (1-e^{-C_t(0,x)})
    \leq \sum_{x} C_t(0,x) = O(\ell_t^2)
    .
  \end{equation}
  In the conservative case, $C_t^0(x) \geq -1/L^2$ and the claim follows similarly from $|1-e^{-x}| \leq 2|x|$
  for $x\geq -1$.
\end{proof}

\begin{proof}[Proof of \eqref{e:thresh1-Cbd2new}]
  For sufficiently small $c'>0$, we write
  \begin{equation}
    \sum_{x,y} |U_t(x,y)| (\lQ_t f(x)-\lQ_t f(y))^2 =
    \sum_{x,y} A_{xy} B_{xy}^2,
  \end{equation}
  where 
  \begin{align}
    A_{xy} &= |U_t(x,y)| (|x-y|/\ell_t)^2e^{2 c' |x-y|/\ell_t},
    \\
    B_{xy} &= \frac{|\lQ_t f(x)-\lQ_t f(y)|}{|x-y|/\ell_t}e^{-  c' |x-y|/\ell_t}1_{x\neq y}
    .
  \end{align}
  By \eqref{e:Usumx}, then $\sup_x\sum_y A_{xy} \lesssim \ell_t^2$ for $c'>0$ small enough.
  By \eqref{e:pt-delta1} for $\ell_t \vartheta_t \lQ_t$ instead of $\lCdot_t$
  and the inequality $2ab \leq a^2+b^2$, we have for $x\neq y$,
  \begin{align}
    \frac{|\lQ_t(x,z)-\lQ_t(y,z)|}{|x-y|/\ell_t} e^{- c' |x-y|/\ell_t}
    &\lesssim \frac{\vartheta_t}{\ell_t} e^{- c' |x-z|/2\ell_t}e^{- c'|y-z|/2\ell_t}
    \nnb
    &\leq \frac{\vartheta_t}{2\ell_t} (e^{- c' |x-z|/\ell_t}+e^{- c' |y-z|/\ell_t})
    .
  \end{align}
  Thus there are positive $M_{xy} = M_{yx} = O(\vartheta_t\ell_t^{-1}e^{- c' |x-y|/\ell_t})$,
  i.e., $\sup_x \sum_y M_{xy} \lesssim \ell_t \vartheta_t $, such that  
  \begin{equation}
    B_{xy} \leq \sum_{z} (M_{xz}+M_{yz})|f_z|.
  \end{equation}
  Then (using $(a+b)^2 \leq 2a^2+2b^2$ and $A_{xy}=A_{yx}$),
  \begin{align}
    \sum_{x,y} A_{xy} B_{xy}^2
    &\leq \sum_{x,y} A_{xy} \qa{\sum_z M_{xz}|f_z| + \sum_z M_{yz}|f_z|}^2
      \nnb
    &\leq 4\sum_{x,y} A_{xy} \qa{\sum_z M_{xz}|f_z|}^2
      \leq 4 \qa{\sup_x \sum_{y} A_{xy}} \sum_x \qa{\sum_z M_{xz}|f_z|}^2.
  \end{align}
  Similarly (with $2|ab| \leq a^2+b^2$ and $M_{xy}=M_{yx}$)
  \begin{align}
    \sum_x \qa{\sum_z M_{xz} |f_z|}^2
    &=
      \sum_{x,z,w} M_{xz}M_{xw} |f_zf_w|
  \nnb
    &\leq
      \sum_{x,z,w} M_{xz}M_{xw} |f_z|^2
      \leq
      \qa {\sup_z \sum_{x} M_{xz}}  \qa{\sup_x \sum_w M_{xw}} \sum_z |f_z|^2.
  \end{align}
  Therefore
  \begin{align}
    \sum_{x,y} A_{xy} B_{xy}^2
    \leq 4 \qa{\sup_x \sum_{y} A_{xy}}
    \qa {\sup_z \sum_{x} M_{xz}}  \qa{\sup_x \sum_w M_{xw}} |f|_2^2.
  \end{align}
  Since $\sup_x\sum_y A_{xy} \lesssim \ell_t^2$ and $\sup_x \sum_y M_{xy} \lesssim \vartheta_t\ell_t$,
  the desired bound $\lesssim \vartheta_t^2\ell_t^4$ follows.
  The bounds are unchanged in the conservative case.
\end{proof}

\begin{proof}[Proof of \eqref{e:thresh1-Cbd4}]
  We proceed analogously to the proof of \eqref{e:thresh1-Cbd2new},
  i.e., for sufficiently small $c'>0$, we write
  \begin{equation}
    \sum_{x,y} |U_t(x,y)| |\lQ_t f(x)-\lQ_t f(y)| =
    \sum_{x,y} A_{xy} B_{xy},
  \end{equation}
  where 
  \begin{align}
    A_{xy} &= |U_t(x,y)| (|x-y|/\ell_t)e^{c' |x-y|/\ell_t},
    \\
    B_{xy} &= \frac{|\lQ_t f(x)-\lQ_t f(y)|}{|x-y|/\ell_t}e^{-  c' |x-y|/\ell_t}1_{x\neq y}
    .
  \end{align}
  By \eqref{e:Usumx}, again $\sup_x\sum_y A_{xy} \lesssim \ell_t^2$ for $c'>0$ small enough,
  but now using that $\beta<6\pi$ due to the different power in the definition of $A_{xy}$.
  The bound for $B_{xy}$ is the same.
  From this, we conclude
  \begin{align}
    \sum_{x,y} A_{xy} B_{xy}
    &\leq 2\sum_{x,y} A_{xy} \qa{\sum_z M_{xz}|f_z|}
    \nnb
    &\leq 2\qa{\sup_x \sum_{y} A_{xy}} \qa{\sup_z \sum_x M_{xz}} |f|_1
    \lesssim \ell_t^3 \vartheta_t |f|_1.
  \end{align}
  Since $\lQ_t = \ell_tQ_t$, this is \eqref{e:thresh1-Cbd4}.
  The bounds are unchanged in the conservative case.
\end{proof}

\begin{proof}[Proof of \eqref{e:thresh1-Cbd3}]
  By \eqref{e:pt-delta1}
  and \eqref{e:Usumx} (with $\beta<6\pi$), one can find $c'>0$ small enough such that
  \begin{multline}
    \sup_{x_1} \sum_{x_2,x_3} |U_t(x_1,x_2)| |\delta_{12}\lCdot_t(x_1,x_2,x_3)| \\
    \lesssim
    \vartheta_t^2 \sup_{x_1} \sum_{x_2,x_3} |U_t(x_1,x_2)| e^{c'|x_1-x_2|/\ell_t}
    \frac{|x_1-x_2|}{\ell_t} e^{- c'|x_1-x_3|/2\ell_t -  c'|x_2-x_3|/2\ell_t}
    \lesssim \ell_t^4 \vartheta_t^2,
  \end{multline}
  where a factor $\ell_t^2$ comes first by summing over $x_3$ and another factor $\ell_t^2$ from \eqref{e:Usumx}.
  The same applies when the roles of $x_1,x_2,x_3$ in the $\sup$ and sum are exchanged.
  The bounds are unchanged in the conservative case.
\end{proof}

\begin{proof}[Proof of \eqref{e:thresh1-Cbd2b}]
  By \eqref{e:pt-delta2}, there is $c'>0$ small enough such that
  \begin{multline}
    |\delta_{34}\delta_{12} \lCdot_t(x_1,x_2,x_3,x_4)|e^{-c'|x_1-x_2|/\ell_t-c'|x_3-x_4|/\ell_t}
    \\
    \lesssim (|x_1-x_2|/\ell_t)(|x_3-x_4|/\ell_t) e^{- c' |x_1-x_3| / \ell_t}\vartheta_t^2,
  \end{multline}
  and using
  \eqref{e:Usumx} both for the sum over $x_2$ and $x_4$ (with $\beta<6\pi$),
  as well as the elementary bound $\sup_{x_1} \sum_{x_3} e^{-c|x_1-x_3|/\ell_t} \lesssim \ell_t^2$,
  this implies
  \begin{equation}
    \sup_{x_1}\sum_{x_2,x_3,x_4}|U_t(x_1,x_2)U_t(x_3,x_4)|
    |\delta_{34}\delta_{12} \lCdot_t(x_1,x_2,x_3,x_4)|
    \lesssim \ell_t^6 \vartheta_t^2
  \end{equation}
  with one factor $\ell_t^2$ from each of the sums.
  The bounds are unchanged in the conservative case.
\end{proof}

\section*{Acknowledgements}

We thank David Brydges for invaluable discussions and for feedback
on preliminary versions of this manuscript.
We also thank Felix Otto for pointing out the resemblence of our construction
with some works related to the Ricci flow.
We acknowledge the support of ANR-15-CE40-0020-01 grant LSD.
We would also like to thank the Isaac Newton Institute for Mathematical
Sciences for support and hospitality during the programme ``Scaling limits,
rough paths, quantum field theory'' when work on this paper was
undertaken; this work was supported by EPSRC Grant Number EP/R014604/1.

\bibliography{all}
\bibliographystyle{plain}

\end{document}